\numberwithin{equation}{section}
 \renewcommand\section{\@startsection {section}{1}{\z@}%
     {-4.5ex \@plus -1ex \@minus -.2ex}%
     {2.3ex \@plus.8ex}%
    {\centering\scshape}}
\def\M{\mathcal{M}}
\def\C{\mathcal{C}}
\def\E{\mathcal{E}}
\def\F{\mathcal{F}}
\def\H{\mathcal{H}}
\def\I{\mathcal{I}}
\def\J{\mathcal{J}}
\def\mcO{\mathcal{O}}
\def\V{\mathcal{V}}
\def\X{\mathcal{X}}
\def\PP{\mathbb{P}}
\def\CC{\mathbb{C}}
\def\ZZ{\mathbb{Z}}
\def\ov#1{\overline{#1}}
\def\codim{\mathrm{codim}}
\def\Pic{\operatorname{Pic}}
\def\Hom{\operatorname{Hom}}
\def\Ext{\operatorname{Ext}}
\def\tor{\operatorname{Tor}}
\def\cliff{\operatorname{Cliff}}
\def\coker{\operatorname{Coker}}
\def\ker{\operatorname{Ker}}
\def\rk{\operatorname{rk}}
\def\ext{{\E}xt}
\def\hom{{\H}om}
\def\wt{\widetilde}
\def\oc2{\mathcal{O}_{\C_2}}
\newtheorem{theorem}{Theorem}[section]
\newtheorem{prop}[theorem]{Proposition}
\newtheorem{lem}[theorem]{Lemma}
\newtheorem{defin}[theorem]{Definition}
\newtheorem{rem}[theorem]{Remark}
\newtheorem{cor}[theorem]{Corollary}
\newtheorem{example}[theorem]{Example}
\newtheorem{claim}[theorem]{Claim}
\newcommand{\be}{\begin{equation}}
\newcommand{\ee}{\end{equation}}
\begin{document}

\title[Polarized Halphen surfaces and du Val curves]{Rank two vector bundles on polarised Halphen surfaces and the  Gauss-Wahl map for du Val curves}

\author{Enrico Arbarello}
\address{Enrico Arbarello: Dipartimento di Matematica Guido Castelnuovo, Universit\`a di Roma Sapienza
\hfill
\indent Piazzale A. Moro 2, 00185 Roma, Italy} \email{{\tt ea@mat.uniroma1.it}}

\author{Andrea Bruno}
\address{Andrea Bruno: Dipartimento di Matematica e Fisica, Universit\`a Roma Tre
\hfill \newline\texttt{}  \indent Largo San Leonardo Murialdo 1-00146 Roma, Italy} \hfill \newline\texttt{}
\email{{\tt bruno@mat.uniroma3.it}}


\maketitle
\begin{abstract}

A genus-$g$ du Val curve is a degree-$3g$ plane curve having 8 points of multiplicity $g$, one point of multiplicity $g-1$, and no other singularity. We prove that the corank of the Gauss-Wahl map of a general du Val curve of odd genus ($>11$) is equal to one. This, together with the results of \cite{ABFS}, shows that the characterisation of Brill-Noether-Petri curves with non-surjective Gauss-Wahl map as hyperplane sections of K3 surfaces and limits thereof, obtained in \cite{ABS2}, is optimal.
\end{abstract}

\section{Introduction.}

Let $C$ be a genus $g$ curve.
Recall the Gauss -Wahl map 
\be\label{G-W}
\nu=\nu_C:\bigwedge ^2 H^0(C,\omega_C)\rightarrow H^0(C,\omega_C^{\otimes 3})
\ee
defined by $s\wedge t\mapsto s\cdot dt-t\cdot ds$.
 
In \cite{ABS1} the following theorem  was proved.

\begin{theorem}\label{Main1}{\rm (Arbarello, Bruno, Sernesi).}  Let $C$ be a Brill-Noether-Petri curve  of genus $g\geq 12$. Then $C$ lies on a polarized K3 surface, or on a limit thereof, if and only if the Gauss-Wahl map is not surjective.
  \end{theorem}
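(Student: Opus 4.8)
The plan is to route both implications through the classical dictionary, due to Wahl and refined by Lvovsky and Zak, between the cokernel of $\nu_C$ and the extendability of the canonically embedded curve $C\subset\mathbb{P}^{g-1}=\mathbb{P}(H^0(C,\omega_C)^{\vee})$. A nonzero class in $\coker(\nu_C)$ corresponds to a first-order extension of $C$, i.e. to a ribbon (double structure) $\wt{C}$ on the canonical curve with conormal bundle $\omega_C^{-1}$, equivalently to a surface germ in $\mathbb{P}^{g}$ cutting out $C$ on a hyperplane. With this in hand the ``only if'' direction is immediate: if $C$ is a smooth hyperplane section of a polarised K3 surface $S\subset\mathbb{P}^{g}$, with $\omega_C\cong\mathcal{O}_S(1)|_C$, then $C$ extends to the non-conical surface $S$, the associated extension class is nonzero, and $\nu_C$ fails to be surjective; the degenerate members of such families account for the ``limits thereof.'' I would therefore concentrate on the converse.

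For the converse, assume $\nu_C$ is not surjective. First I would pin down the corank. Since the Gauss--Wahl map of a general curve of genus $g\geq 12$ is surjective, non-surjectivity is a genuinely special condition, and the point is to show that the genus bound together with Brill--Noether--Petri generality forces the corank to equal exactly $1$: a larger corank would force $C$ to extend to a threefold or to lie on a rational scroll, and both are incompatible with Petri-generality. A corank-one class then singles out a distinguished ribbon $\wt{C}$. The crucial move is to integrate this first-order datum: I would carry out a smoothing and obstruction analysis for $\wt{C}$, showing that the relevant obstruction group vanishes (or is controlled) when $g\geq 12$, so that $\wt{C}$ is realised as the hyperplane section of an actual, possibly singular, surface $S\supset C$.

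Once $S$ is produced, adjunction should do the classification. Writing $C\in|H|$ with $H$ the hyperplane class and $\omega_C\cong\mathcal{O}_S(H)|_C$, the identity $\omega_C\cong(\omega_S+H)|_C$ forces $\omega_S|_C\cong\mathcal{O}_C$, so that $\omega_S\cdot H=0$ and $C^2=2g-2$. I would then combine the ampleness of $H$ with the Petri-generality of $C$ to show that $S$ has numerically trivial canonical class and irregularity zero; the only such surfaces carrying a Brill--Noether--Petri curve of genus $\geq 12$ as an ample hyperplane section are K3 surfaces, since abelian, bielliptic and Enriques sections would make $C$ Brill--Noether or Petri special. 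When the carpet $\wt{C}$ only partially smooths, the surface $S$ degenerates to a non-normal, reducible or cone-like object, and these are exactly the ``limits of K3 surfaces'' of the statement; I would identify them as the boundary members of the appropriate component of the Hilbert scheme.

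The hard part will be the integration step: passing from the first-order extension recorded by a corank-one class to a genuine surface, and doing so uniformly over the Brill--Noether--Petri locus. Controlling the obstruction space, proving that a smoothing exists, and guaranteeing that it stays within the K3 / limit-K3 stratum---rather than producing a spurious surface of general type or an unwanted non-reduced object---is the technical heart of the argument. It is precisely here that the hypotheses $g\geq 12$ and Brill--Noether--Petri genericity are indispensable.
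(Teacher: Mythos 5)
The paper does not actually prove Theorem \ref{Main1} here --- it quotes it from \cite{ABS1} --- so your proposal must be measured against the known ABS argument, which your outline does track at the top level (nonzero class in $\coker(\nu_C)$ $\leadsto$ ribbon on the canonical curve $\leadsto$ integrate to a surface in $\PP^g$ $\leadsto$ classify). Within that outline, however, there is one step that is wrong and two that are asserted where the real theorems live. The wrong step is your corank reduction: the theorem neither needs nor uses $\operatorname{corank}(\nu_C)=1$ --- indeed, pinning the corank down to exactly one (for du Val curves) is the main \emph{new} result of the present paper, not an input to Theorem \ref{Main1} --- and your claim that corank $\geq 2$ would ``force $C$ to extend to a threefold or lie on a rational scroll'' is not a valid implication. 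Zak--Lvovsky-type extendability to higher-dimensional varieties is governed by Gaussian maps of hyperplane sections of the extending variety, not directly by $\dim\coker(\nu_C)$; a single nonzero cokernel class already yields the ribbon, so this entire step should be deleted rather than repaired.

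The two deferred steps are exactly the load-bearing ones. First, the integration of the ribbon is not an ad hoc obstruction-vanishing argument keyed to $g\geq 12$: it is Theorem 7.1 of \cite{Wahl-square} (cohomology of the square of the ideal sheaf), whose hypothesis is $\cliff(C)\geq 3$, supplied here because a Brill--Noether--Petri curve has $\cliff(C)=\lfloor (g-1)/2\rfloor$; this is how $g\geq 12$ (via $g\geq 11$ for this step) actually enters, and your ``show the obstruction group vanishes'' names the difficulty without identifying the mechanism. Second, your classification via adjunction ($\omega_S|_C\cong\mcO_C$, hence $K_S$ numerically trivial, hence K3 after excluding abelian/bielliptic/Enriques sections) presumes $S$ smooth or at least normal Gorenstein, whereas the surface produced by Wahl's theorem has isolated, possibly elliptic, singularities. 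The actual proof runs through Epema's classification of surfaces with canonical hyperplane sections, then shows under the BNP hypothesis that the non-K3 alternatives are the blow-ups of $\PP^2$ at ten points on a cubic with the anticanonical elliptic curve contracted, and finally invokes the separate theorem (quoted in this paper as the proposition that $\ov S$ is a limit of smooth K3 surfaces in $\PP^g$, from \cite{ABS2}) that these singular surfaces really are flat limits of K3s. Your closing sentence ``these are exactly the limits of K3 surfaces'' is therefore a theorem you would still owe, not bookkeeping; as it stands, the proposal establishes the easy direction (modulo the non-cone hypothesis needed for Lvovsky--Zak) and the skeleton of the hard one, but not the hard direction itself.
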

  
  This theorem proves a conjecture by J. Wahl, \cite{Wahl-square}. To be precise, the original version of this conjecture made no mention of limiting K3 surfaces. Thus the  question remained to decide wether the statement of the Theorem \ref{Main1} is   optimal. To give a positive answer to this question one should produce an example  of a surface $\ov S\subset\PP^g$ with canonical sections (so that $\nu_C$ is not surjective), having  the following additional properties.
\vskip 0.1 cm  
  a) $\ov S$  is singular (i.e. has an isolated elliptic singularity), and  smoothable in $\PP^g$ (to a K3 surface).
 \vskip 0.1 cm 
 b) The general hyperplane section $C$ of $\ov S$ is a Brill-Noether-Petri curve.
 \vskip 0.1 cm 
 c) $C$ is not contained in any (smooth) K3 surface.
 \vskip 0.1 cm

In the proof of Theorem \ref{Main1} a detailed
analysis of surfaces with genus-$g$,  canonical,  sections  was carried out,  under the additional  hypothesis that these  sections should be Brill-Noether-Petri curves. 
This 
led to a list of possible examples
of smoothable surfaces in $\PP^g$ having isolated elliptic singularities,
and, possibly, Brill-Noether-Petri curves as hyperplane sections.

A very notable example, in the above list,  is the following. Take nine general points $p_1,\dots, p_9$  on $\PP^2$. A genus-$g$ du Val curve $C_0$ is a degree-$3g$ plane curve having points of multiplicity $g$ in $p_1,\dots, p_8$
and a point of multiplicity $g-1$ at $p_9$. All of these curves pass through an additional point $p_{10}$. Let $S$ be the blow-up of $\PP^2$ at 
$p_1,\dots, p_{10}$ and take the proper transform $C$ of $C_0$. The linear system $|C|$ sends $S$ to a surface $\ov S\subset \PP^g$ which is indeed a surface with canonical sections which is smooth except for a unique elliptic singularity. Moreover $\ov S$ is the limit  in $\PP^g$
of smooth K3 surfaces.  In \cite{ABFS} the following theorem was proved (see Section 1 for the definition of a nine-tuple of $k$-general points).

\begin{theorem}\label{duVal-BN}{\rm (Arbarello, Bruno, Farkas, Sacc\`a). }Suppose $p_1,\dots, p_9$ are $g$-general. Consider, as above, the du Val linear system $|C|$. Then the general element of $|C|$ is a Brill-Noether-Petri curve,  i.e
$$\mu_{0,L}:H^0(C,L)\otimes H^0(C, \omega_C\otimes L^{-1})\rightarrow H^0(C,\omega_C)$$
is injective for every line bundle $L$ on $C$.
\end{theorem}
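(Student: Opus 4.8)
The plan is to reformulate the injectivity of $\mu_{0,L}$ as a vanishing statement on the surface and then to exploit the fact that the du Val surface $\ov S$ behaves, homologically, like a degenerate K3. First I would reduce to the essential case: by Serre duality the kernels of $\mu_{0,L}$ and of $\mu_{0,\omega_C\otimes L^{-1}}$ are interchanged, so we may assume $\deg L\le g-1$, and by replacing $L$ by the subsheaf generated by its sections we may assume $L=A$ is globally generated with $h^0(A)\ge 2$ and $h^1(A)\ge 2$ (in the remaining cases injectivity follows from the base-point-free pencil trick). Writing $M_A=\ker\bigl(H^0(C,A)\otimes\mcO_C\rightarrow A\bigr)$, the snake lemma identifies $\ker\mu_{0,A}\cong H^0(C,M_A\otimes\omega_C\otimes A^{-1})$, so it suffices to prove that this group vanishes for every such $A$.

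Next I would globalise the kernel bundle to the surface. Using the embedding $C\subset\ov S$, whose dualizing sheaf is trivial since $\ov S$ has canonical curve sections and $K_S\cdot C=0$, I would build the Lazarsfeld--Mukai bundle $E=E_{C,A}$ as the dual of the kernel of the evaluation map $H^0(C,A)\otimes\mcO_{\ov S}\rightarrow \iota_*A$. The triviality of $\omega_{\ov S}$ makes the construction behave just as on a K3: $E$ is a rank-two bundle with $\det E=\mcO_{\ov S}(C)$, $c_2(E)=\deg A$ and $h^0(E)=h^0(A)+h^1(A)$, while the Petri group $H^0(C,M_A\otimes\omega_C\otimes A^{-1})$ is read off from $\Hom(E,E)$. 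Concretely, the failure of Petri for $A$ is equivalent to the non-simplicity of $E$, i.e. to the existence of a non-scalar endomorphism, which---because $\det E=\mcO_{\ov S}(C)$ is primitive and $E$ is generated by its sections off a finite set---forces a saturated sub-line-bundle $M\hookrightarrow E$ with effective quotient, so that both $M$ and $\mcO_{\ov S}(C)\otimes M^{-1}$ are effective with prescribed intersection numbers against $C$ and $-K_{\ov S}$.

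The heart of the argument, and the step I expect to be the main obstacle, is then to rule out such a destabilising sub-line-bundle $M$ using the $g$-generality of $p_1,\dots,p_9$. Unlike the classical K3 situation, where $\Pic$ has rank one and the required intersection inequalities are immediately contradictory, here $\Pic(S)$ has large rank and there are many a priori candidate classes $M=dH-\sum a_iE_i$. I would translate the effectivity of $M$ and of $\mcO_{\ov S}(C)\otimes M^{-1}$, together with the section count forced by $h^0(E)=h^0(A)+h^1(A)$ and the Hodge-index and adjunction constraints, into the existence of an auxiliary plane curve of controlled degree passing through prescribed subsets of the nine points with prescribed multiplicities. The notion of $g$-generality is designed precisely so that no such curve exists; feeding this back yields a numerical contradiction, hence $E$ is simple, $H^0(C,M_A\otimes\omega_C\otimes A^{-1})=0$, and $\mu_{0,A}$ is injective. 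Since $A$ was an arbitrary line bundle in the reduced range, this gives the Brill--Noether--Petri property of the general du Val curve. The delicate points to watch are the behaviour of $E$ across the elliptic singularity of $\ov S$ (which is why one may prefer to perform the construction on the resolution $S$ and keep track of the twist by $K_S$), and the bookkeeping that converts the lattice-theoretic constraints on $M$ into the combinatorial genericity condition on the nine points.
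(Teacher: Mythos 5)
First, a point of reference: the paper you are working from does not prove this theorem at all --- it is quoted from \cite{ABFS}, and the proof there is by degeneration, not by vector bundles. The linear system $|C|$ satisfies $C'\sim gJ'+E_9$, so it contains the reducible member $J'\cup C_{g-1}$ (Remark c) of the paper, where the base point $p_{10}(g)$ is said to ``play an important role''); iterating this, the du Val curve degenerates to a semistable curve assembled from copies of the elliptic curve $J$ glued at the points $p_{10}(k)$, essentially a chain of elliptic curves. On such a chain the $g$-Halphen generality condition $\mcO_{J'}(hJ')\neq\mcO_{J'}$ for $h\leq g$ becomes exactly the non-torsion condition on the differences of the nodes on each elliptic component, and the Brill--Noether--Petri property of the limit follows by limit-linear-series arguments; unnodality (the Cremona-generality half of $g$-generality) keeps the degeneration under control, and openness of the Petri condition concludes. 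Your proposal is therefore a genuinely different route, and unfortunately it has a gap precisely at the step you flag as the heart of the argument.

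The gap is this: on a Halphen surface the Lazarsfeld--Mukai mechanism for converting Petri failure into a contradiction cannot close, because the hypotheses it needs fail in an essential, not merely technical, way. Lazarsfeld's argument requires that $C$ admit no decomposition into effective classes (equivalently, that every member of $|C|$ be irreducible and reduced); here $C\sim gJ'+E_9$ is itself such a decomposition, $\Pic(S)$ has rank $10$, and non-simple canonical-determinant bundles with the expected invariants genuinely exist --- indeed the extensions $0\to\mcO_S(B)\to\E_x\to\I_x(A)\to 0$ studied throughout this very paper (in the index-$(s+1)$ case, with $C=A+B$ and $B$ an elliptic pencil) are non-simple, have $s+2$ sections, and restrict to \emph{stable} bundles $E_x$ on $C$. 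So the implication you rely on, ``$g$-generality rules out a saturated sub-line-bundle $M\subset E$ with $M$ and $\mcO(C)\otimes M^{-1}$ effective,'' is false: such sub-line-bundles exist, and yet $C$ is Petri (that coexistence is exactly what makes this paper's reconstruction of $\ov S$ as $M_C(2,K_C,s)$ work). Relatedly, your translation of $g$-generality into ``no auxiliary plane curve of controlled degree and multiplicities'' captures only the Nagata/unnodal half of the hypothesis; the decisive half is the torsion condition $\mcO_{J'}(hJ')\neq\mcO_{J'}$, $1\leq h\leq g$, which is invisible in lattice-theoretic bookkeeping on $\Pic(S)$ (it is a condition on the point $p_{10}$ on $J$, not on intersection numbers) and is precisely the input the degeneration argument consumes. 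Two smaller slips: the kernel bundle $E_{C,A}$ has rank $h^0(A)$, not $2$, once you have reduced to $h^0(A)\geq 2$ arbitrary; and the identities $h^0(E)=h^0(A)+h^1(A)$, $E\cong E^\vee(C)$ need $\omega$ trivial, which holds on $\ov S$ but must then be pushed through the minimally elliptic singularity (or else one works on $S$, where $K_S=-J\neq 0$ breaks them).
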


Thus the pair $(\ov S, C)$ gives an example of surface for which properties a) and  b), above, are satisfied. The aim of this paper is to prove that also property c) is satisfied by $C$. We will in fact prove a  statement which turns out to be stronger.

\begin{theorem}\label{main-theorem}The corank of the Gauss-Wahl map for a general  du Val curve
of genus $g=2s+1>11$ is equal to one.
\end{theorem}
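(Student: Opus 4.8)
The plan is to split the equality into the bound $\operatorname{corank}(\nu_C)\ge 1$, which is essentially given, and the bound $\operatorname{corank}(\nu_C)\le 1$, which is the heart of the matter. For the first inequality, recall that $|C|$ maps the smooth rational surface $S$ onto the surface $\ov S\subset\PP^g$, which carries canonical sections and has a single isolated elliptic singularity, obtained by contracting the anticanonical elliptic curve. Since $C$ is then a hyperplane section of a surface with trivial dualising sheaf in the sense of Theorem \ref{Main1}, the extendability theory of \cite{ABS1} forces $\nu_C$ to be non-surjective, i.e.\ $\operatorname{corank}(\nu_C)\ge 1$. Proving the opposite inequality is exactly what yields property c) of the Introduction: a corank of $2$ or more would signal additional independent extensions of $C$, such as a smooth K3 surface through it, so the bound $\operatorname{corank}(\nu_C)\le 1$ both sharpens Theorem \ref{Main1} and shows $C$ lies on no smooth K3.

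To bound the corank from above I would work on the smooth surface $S$ rather than on the singular $\ov S$, first translating $\coker(\nu_C)$ into a cohomology group on $S$. Using the conormal sequence of $C\subset S$ together with Serre duality, one identifies $\coker(\nu_C)^\vee$ with the relevant summand of a cohomology group of a sheaf on $S$ twisted down by the du Val class --- the analogue, for a Halphen surface, of the identification $\coker(\nu_C)^\vee\cong H^1\bigl(S,\Omega^1_S\otimes\mathcal O_S(-C)\bigr)$ valid for sections of K3 surfaces and used in \cite{ABS1}. The point of passing to $S$ is that its Halphen structure --- $S$ is the blow-up of $\PP^2$ at the nine general points $p_1,\dots,p_9$ together with $p_{10}$, equipped with its pencil of anticanonical, arithmetic-genus-one curves --- makes the linear algebra of $|C|$ and of the exceptional classes completely explicit.

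The decisive step, and the reason for the paper's title, is to evaluate this cohomology by means of a rank-two vector bundle $E$ on $S$. I would take $E$ to be a Lazarsfeld--Mukai-type bundle attached to a pencil $A$ (a $g^1_d$) on the du Val curve $C$, so that $E$ has rank two and sits in an extension $0\to\mathcal O_S\to E\to \mathcal I_Z(D)\to 0$, with the class $D$ and the length of the zero-dimensional scheme $Z$ determined by the numerical invariants of $(C,A)$; the Brill--Noether theory of du Val curves from Theorem \ref{duVal-BN} guarantees a suitable $A$. The target cohomology group is then read off from $H^1(S,E)$, or from an $\Ext$-group built out of $E$, and one computes its dimension by Riemann--Roch on $S$ together with vanishing theorems adapted to the elliptic fibration, reducing the whole question to an explicit count in $g=2s+1$. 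The parity hypothesis enters precisely here: the existence, stability and cohomological rigidity of $E$ with the prescribed Chern classes $c_1=D$, $c_2=\deg Z$ depend on $g$ being odd, which is the source of the restriction to odd genus.

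The main obstacle I expect is the sharpness of the vanishing: producing $\operatorname{corank}(\nu_C)\ge 1$ is automatic, but forcing the dimension to be \emph{exactly} one requires showing that no cohomology survives beyond the single class coming from $\ov S$ itself. Concretely this means proving that $E$ is as rigid as the numerics allow and that all error terms in Riemann--Roch vanish; here the $g$-generality of $p_1,\dots,p_9$ from Theorem \ref{duVal-BN} is indispensable, as is a careful analysis of the contribution of the anticanonical elliptic curve, which is the locus of the elliptic singularity of $\ov S$. Checking the low-genus boundary imposed by $g>11$, and confirming that the extension class defining $E$ is unobstructed there, is the final delicate point that pins the corank to one.
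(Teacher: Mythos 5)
Your lower bound ($\operatorname{corank}(\nu_C)\geq 1$, because $C$ sits on $\ov S$) is fine and agrees with the paper. The upper bound, however, contains a genuine gap at its very first step: you propose to identify $\coker(\nu_C)^{\vee}$ with a cohomology group computed on the fixed surface $S$ (an analogue of $H^1(S,\Omega^1_S\otimes\mcO_S(-C))$) and then to evaluate it by Riemann--Roch for a Lazarsfeld--Mukai bundle on $S$. No such identification is available, and structurally it cannot be: the cokernel of the Gauss--Wahl map parametrizes \emph{all} ribbons over the canonical curve $C\subset\PP^{g-1}$ (Voisin's interpretation, Remark \ref{voisin-bund}), not only the infinitesimal extensions supported by the one surface $\ov S$. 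A computation on $S$ can at best recover the one-dimensional contribution of $\ov S$ itself --- i.e.\ the lower bound you already have --- and is blind to cokernel classes that would integrate to \emph{other} surfaces through $C$, in particular to a smooth K3; excluding those is precisely the content of the theorem. Your own closing paragraph (``no cohomology survives beyond the single class coming from $\ov S$'') names the difficulty but cannot be addressed inside your framework, because the group you compute on $S$ never saw the other classes to begin with. The paper's route is necessarily global: (1) for $C$ a section of an index-$(s+1)$ Halphen surface it reconstructs $\ov S$ intrinsically from $C$ as the Brill--Noether locus, $\ov S\cong M_C(2,K_C,s)$ (Corollary \ref{iso}, resting on the Petri-map analysis of subsection \ref{petri-map-s} to get smoothness off one point and normality at $[E]$); (2) it rules out a smooth K3 $X$ through $C$ (Theorem \ref{Main-s+1-smooth}) by comparing the degenerating families $M_{v_t}(X_t)\cong V(2,K_{C_t},s)_{red}$ via Theorem \ref{Tmain} and Burns--Rapoport, and deriving a monodromy contradiction from Kulikov's theorem \cite{Kulikov} --- this is where the Voisin bundles and the base-point-free $g^1_{s+2}$ of Lemma \ref{g-1-s+1} enter, on the putative K3, not on $S$; (3) if $\operatorname{corank}(\nu_C)\geq 2$, a two-dimensional subspace $U$ of the cokernel integrates, via \cite{Wahl-square} and \cite{ABS2}, to a family over a cover of $\PP(U)$ of surfaces with hyperplane section $C$, each forced to be Halphen of index $s+1$ and hence isomorphic to $M_C(2,K_C,s)=\ov S$, contradicting the discreteness of its automorphism group (Theorem \ref{Main-s+1}). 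Finally, the statement for the \emph{general} du Val curve follows by upper semicontinuity of the corank and irreducibility of the family of du Val curves --- a specialization step entirely absent from your plan, which aimed at a direct computation on a general member.

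Two smaller inaccuracies: the parity hypothesis does not enter through ``existence, stability and cohomological rigidity'' of a Lazarsfeld--Mukai bundle on the rational surface $S$ (Riemann--Roch there imposes no parity constraint); it enters through the choice $g=2s+1$ with the Halphen index $s+1$, i.e.\ the decomposition $C=A+B$ with $B=(s+1)J'$ cutting out a $g^1_{s+1}$ on $C$, and through the Mukai vector $v=(2,[C],s)$ with $v^2=0$, which makes $M_v$ of a K3 a surface. Also, $\ov S$ is not a surface ``with trivial dualising sheaf'': it is a surface with canonical hyperplane sections and one elliptic singularity ($-K_S=J$ on the resolution), which is what makes the non-surjectivity of $\nu_C$ available through \cite{ABS2} and \cite{Wahl-square}.
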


\begin{cor} \label{main-cor}For any odd $g>11$, 
there exist Brill-Noether-Petri curves which are (smooth) hyperplane sections of a unique surface $\ov S\subset\PP^g$ 
whose singular locus consists in 
an elliptic singularity. Moreover $\ov S$ is a limit of smooth K3 surfaces. In particular the statement of Theorem \ref{Main1} 
is optimal.
\end{cor}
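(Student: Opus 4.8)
The plan is to bound the corank from both sides, the lower bound being essentially free and the upper bound carrying the whole content. By Theorem~\ref{duVal-BN} a general du Val curve $C$ of genus $g=2s+1\ge 13$ is a Brill-Noether-Petri curve, and by construction it is a hyperplane section of $\ov S\subset\PP^g$, which the excerpt records as a limit of smooth polarised K3 surfaces. Theorem~\ref{Main1} therefore applies in its ``lies on a limit K3 $\Rightarrow$ non-surjective'' direction and shows that $\nu_C$ is not surjective, so $\mathrm{corank}(\nu_C)\ge 1$. Everything reduces to proving the reverse inequality $\mathrm{corank}(\nu_C)\le 1$, which I would interpret geometrically as the assertion that $C$ lies on a \emph{unique} surface with canonical sections.

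For the upper bound I would transfer the problem to the smooth rational (Halphen) surface $S$ desingularising $\ov S$. Writing $L=\mathcal{O}_S(C)$, one has $L|_C=\omega_C$ and $K_S\cdot C=0$, and the Wahl/ABS formalism identifies $\coker(\nu_C)^\vee$ with a group controlled by $H^1$ of an explicit sheaf assembled from $T_S$ and $L$, modulo the distinguished extension class of the surface $\ov S$ itself. That class forces one unit of corank; the real task is to prove that the ambient cohomology contributes nothing further, equivalently that no first-order deformation of $C$ escapes the surface.

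The decisive device, and the reason odd genus is singled out, is a rank-two Lazarsfeld-Mukai bundle $E$ on $S$ attached to a minimal pencil on $C$. For $g=2s+1$ the gonality is $(g+3)/2$, an integer, so the Chern invariants of $E$ against the Halphen polarisation land in the balanced range where (as in the analogous K3 computation) the associated Mukai vector is isotropic, $\langle v(E),v(E)\rangle=0$; for even $g$ this parity fails and $E$ sits in a genuinely different regime. The isotropic case is precisely the arithmetic underlying Mukai's reconstruction of a K3-type surface as a moduli space of such bundles, and it is this that should force $C$ to determine its surface essentially uniquely. Concretely I would establish stability of $E$ via the Bogomolov inequality on $S$, compute the relevant Euler characteristics by Riemann-Roch, and reduce the $H^1$ governing $\coker(\nu_C)$ to $\Hom(E,E)$ and $\Ext^1(E,E)$-type terms, which the isotropic numerics should pin down so as to give corank at most one.

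To avoid delicate uniform estimates on the generic Halphen surface I would finally invoke upper-semicontinuity of the corank: since it can only jump up on closed loci, it suffices to verify $\mathrm{corank}(\nu_C)\le 1$ for a single admissible configuration of the nine points---possibly after a flat degeneration of $S$ to a more symmetric or partially reducible Halphen surface on which $E$ and its cohomology are computable---and semicontinuity then carries the bound to the general curve. The main obstacle I anticipate is exactly the bundle theory on this non-K3 geometry: $S$ is rational and $\ov S$ carries an elliptic singularity, so neither the clean self-duality of Mukai vectors nor the K3 vanishing theorems are available verbatim. Controlling $\Ext^1(E,E)$ and ruling out the ``extra'' cohomology---that is, proving genuine uniqueness of the surface through $C$---is the technical heart of the argument, and it is here that the structure of the anticanonical (Halphen) pencil on $S$ must be used in an essential way.
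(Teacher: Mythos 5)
Your outer scaffolding agrees with the paper's: corank $\ge 1$ comes for free from Theorem \ref{Main1} since $\ov S$ is a limit of K3 surfaces, and the final semicontinuity-plus-irreducibility step is exactly how the paper passes from the index-$(s+1)$ case to Theorem \ref{main-theorem}. But the entire content of the statement sits in the bound $\mathrm{corank}(\nu_C)\le 1$, and there your proposed mechanism has a genuine gap. You want to compute $\coker(\nu_C)$ on the Halphen surface $S$ through $\Hom(E,E)$- and $\Ext^1(E,E)$-type groups of a Lazarsfeld--Mukai bundle with isotropic Mukai vector $v=(2,[C],s)$, importing the K3 numerology. On a Halphen surface this numerology fails: $K_S=-J\neq 0$, so neither Mukai-pairing self-duality nor the K3 vanishing theorems apply, and the paper warns explicitly that $M_v(S)$ has dimension \emph{five}, not two --- the isotropy $\langle v,v\rangle=0$ pins down nothing here, and no bound on $\Ext^1(E,E)$ of the required kind follows. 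The paper's actual route is structurally different: it reconstructs $\ov S$ intrinsically from $C$ as the Brill--Noether locus, $\ov S\cong M_C(2,K_C,s)$ (Propositions \ref{inj}, \ref{surj}, \ref{smoothm}, \ref{norm} and Corollary \ref{iso}, resting on the delicate Petri-map analysis of subsection \ref{petri-map-s}, where the bundles are extensions $0\to B\to\wt\E_x\to\I_x(A)\to 0$ on $S$, not pencil-attached LM bundles); it rules out smooth K3 extensions of $C$ by comparing two degenerating families of K3 surfaces --- the relative moduli spaces $M_{v_t}(X_t)$ and the Brill--Noether family --- via Burns--Rapoport and Kulikov's monodromy theorem (Theorem \ref{Main-s+1-smooth}); and it excludes corank $\ge 2$ by showing that a positive-dimensional family of extension surfaces would consist entirely of index-$(s+1)$ Halphen surfaces, all isomorphic to the fixed surface $M_C(2,K_C,s)$, contradicting the discreteness of their automorphism groups (Theorem \ref{Main-s+1}). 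None of this global machinery is replaced by your sketch; you yourself label the control of $\Ext^1(E,E)$ ``the technical heart'', which is precisely what is left unproven, and there is concrete evidence in the paper that it cannot be supplied in the form you envisage.

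A second, smaller gap: even granting $\mathrm{corank}(\nu_C)=1$, the uniqueness of the surface asserted in the corollary is not literally that statement. Corank one controls first-order (ribbon) extensions of $C\subset\PP^{g-1}$; the passage to uniqueness of an actual surface in $\PP^g$ requires Wahl's Corollary 3.12 of \cite{Wahl-sections}, whose hypotheses the paper checks explicitly --- condition (b) via the identification of $H^0(C,T_{\ov S|C}\otimes\mcO_C(-C))$ with the kernel of the injective map $H^0(C,N_{C/\ov S}\otimes T_C)\to H^1(C,T_C^2)$, and condition (c) via the Brill--Noether--Petri property. Your ``geometric interpretation'' of corank $\le 1$ as uniqueness of the surface silently assumes this extension-theoretic step, which is a real (if citable) ingredient of the paper's proof of the corollary.
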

The way we prove Theorem \ref{main-theorem} and Corollary \ref{main-cor} is the following. Let $g=2s+1$.
By making an appropriate choice 
of the nine points $p_1,\dots, p_9$ we construct a polarized surface $(\ov S, C)$ in $\PP^g$, as above, 
which is the direct analogue of a smooth K3 surface  $( S, C)$ in $\PP^g$, for which
$\Pic(S)=A\cdot\ZZ\oplus B\cdot\ZZ$ with $A+B=C$, where $B$ is an elliptic pencil cutting out on $C$ a $g^1_{s+1}$. We call such a surface $\ov S\subset\PP^g$ a {\it polarized Halphen surface of index $s+1$}. Halphen surfaces of index $s+1$ were already introduced by Cantat and Dolgachev in \cite{Cantat-Dolgachev} (see Section \ref{basic} below). Unlike the case of $K3$ 
surfaces, all Halphen surfaces have Picard-rank equal to $10$.  Polarized Halphen surfaces of index $s+1$ are peculiar in that they possess an elliptic pencil $|B|$  cutting out on $C$ a $g^1_{s+1}$. Following the ideas in \cite{ABS1} we prove that, in the index-($s+1$) case, the surface $\ov S$ can be reconstructed from its hyperplane section $C$ as a Brill- Noether locus of rank-2 vector bundles on $C$. Namely we establish an isomorphism
\be\label{iso-basic}
\ov S\cong M_C(2, K_C, s)
\ee
where $M_C(2, K_C, s)$ stands for the moduli space of rank-2 vector bundles on $C$ having determinant equal to $K_C$  and at least $s+2$ linearly independent sections. Let $S$ be the desingularization of $\ov S$.
The above isomorphism assigns to a point $x\in \ov S$ 
the vector bundle $E_x$ obtained as the restriction to $C$ of the unique stable torsion free sheaf $\wt \E_x$ on $S$
which is an extension of the form
$$
0\longrightarrow B\longrightarrow \wt \E_x\longrightarrow \I_x(A)\longrightarrow0
$$
with the property that $h^0(\wt \E_x)=s+2$. Such a  torsion free sheaf on $S$ belongs to the moduli space 
$M_v(S)$, with $v=(2, [C], s)$. But, unlike the case of $K3$  surfaces, where this moduli space is in fact a surface isomorphic to $S=\ov S$, in the case at hand the dimension of $M_v(S)$ is equal to five. This is one of the many
instances where the analogy between the case of $K3$ surfaces and the case of Halphen surfaces requires some care. Another instance is the geometry of the moduli space $M_C(2, K_C, s)$. Here one has to establish, a priori, that $M_C(2, K_C, s)$ has only one isolated, normal singularity.
This requires a detailed analysis of the Petri homomorphism for rank-2 vector bundles on $C$. This analysis is carried out in sub-section \ref{petri-map-s}.

Once the isomorphism (\ref{iso-basic}) is established we prove that, in the index-($s+1$) case, there is no smooth $K3$ surface containing $C$. Here we proceed by contradiction 
using the main theorem of \cite {ABS1}. If such a smooth surface $X$ existed we would find, roughly speaking,  a degenerating family $\{X_t\}$ of $K3$ surfaces having as possible central fibers both $M_v(X)$ and $\ov S$.
But since there are stable bundles on $X$ with Mukai vector $v$ (i.e. the Voisin bundles),  $M_v(X)$ is a surface with at most isolated rational singularities. By Kulikov's theorem this is not possible. This is the first step in the proof that, 
 in the index-($s+1$) case, the corank of the Gauss-Wahl map is equal to one. Once this is done the assertion about the corank is true in general. Via \cite{Wahl-sections}, this shows that for a general du Val curve $C$ of odd genus, $\ov S\in \PP^g$ is the unique surface having $C$ as canonical section.

{\bf Acknowledgements.}
It is a pleasure to  thank Giulia Sacc\`a for many interesting conversations on the subject of this paper.
Her help was decisive in wrapping up the proof of the main theorem in the last section.
We also thank Marco Franciosi for very useful conversations about Clifford's index for singular curves.

\section{Halphen surfaces and du Val curves}

\subsection{Basic definitions}\label{basic}
 Let $\tau: S'\to \PP^2$ be the blow up of $\PP^2$ at 9 (possibly infinitely near) points $p_1,\dots, p_9\in \PP^2$. Assume there is a unique  cubic $J_0$ through $p_1,\dots, p_9\in \PP^2$. Let $J'$ be the proper transform of $J_0$ in $S'$.
$$
\{J'\}=|-K_{S'}|\,,\qquad J'^2=0
$$
\begin{defin}
$S'$ is said to be unnodal if it contains no (-2)-curves.
\end{defin}
\vskip 0.2 cm
\begin{defin} a) The points 
$p_1,\dots, p_9$ are said to be $k$-Halphen general if 

 $h^0(J',  \mcO_{J'}(hJ'))=0$, for $1\leq h\leq k$, i.e if 

$h^0(S',  \mcO_{S'}(hJ'))=1$, for $1\leq h\leq k$.
\vskip 0.1 cm 
b) The points $p_1,\dots, p_9$ are said to be $k$-general if they are $k$-Halphen general, and  if $S'$ is  unnodal.
\end{defin}
\vskip 0.2 cm
{\bf Remark on terminology}. In \cite{ABFS}, a set of nine points for which $S'$ is unnodal was called Cremona general and a set of $k$-Halphen general points was called 
$3k$-Halphen general. We decided to follow \cite{Cantat-Dolgachev} which preceded \cite{ABFS}.

\vskip 0.2 cm
\begin{example}{\rm( see \cite{ABFS})}
The points $$\aligned &(-2,3),(-1,-4),(2,5),(4,9),(52,375), (5234, 37866),(8, -23), \\&(43, 282), \Bigl(\frac{1}{4}, -\frac{33}{8} \Bigr)
\endaligned$$
are $k$- general for every $k$.
\end{example}

Let $\ell\subset S'$ be the proper transform of a line in $\PP^2$, and $E_1,\dots, E_9$ the exceptional divisors of $\tau$ so that
$$
J'=3\ell-E_1-\cdots-E_9
$$
 
\begin{defin}\label{dv} A du Val curve of genus $g$ is a degree-$3g$ plane curve $C_0$ having a point of multiplicity $g$ in   $p_1,\dots, p_8$, a point of multiplicity $g-1$ in   $p_9$, and no other singularities.
On $S'$ we have:
$$
C'=3g\ell-gE_1-\cdots-gE_8-(g-1)E_9\,,\qquad C\cdot J'=1\,,\quad \dim|C'|=g\,.
$$
\end{defin}

{\bf Remarks. }We work on $S'$.

{\it 
{\rm a)} As soon as  $p_1,\dots, p_9$ are $1$-Halphen general, a du Val curve exists.

{\rm b)} $C'\cap J'=\{p_{10}\}$, so that $p_{10}$  is a fixed point for $|C|$. The base point 
$$
p_{10}=p_{10}(g)
$$
plays an important role.

{\rm c)}
$C'=gJ'+E_9$, so that $|C'-J'|$ is a du Val linear system of genus $g-1$.
Thus the linear system $|C'|$ contains a reducible element formed by a du Val curve of genus $g-1$ and the elliptic curve $J'$ meeting at $p_{10}$.
}

\vskip 0.2 cm
Now blow up $S'$ at $p_{10}$
and use the following notation:
\be\label{notat1}
\aligned
\sigma: &S\longrightarrow S'\,,\quad\text{is the blow up}\\
E_{10}&=\sigma^{-1}(p_{10})\,,\\
J'&=\sigma^{-1}(J')\,,\qquad\text{(with a slight abuse of notation)}\\
J&=\text{proper transform of} \,\quad J'\\
C&=\text{proper transform of}\quad C'\,,\qquad\\
\endaligned
\ee
Then
\be\label{notat2}
\aligned
J'&=J+E_{10}\\
C&=3g\ell-gE_1-\cdots-gE_8-(g-1)E_9-E_{10}\,,\\
 J^2&=-1\,,\qquad C\cdot J=0\,,\qquad\dim |C|=g
\endaligned
\ee

$$
\aligned
\phi&=\phi_{|C|}: S\longrightarrow \ov S\subset \PP^g\,,\\
& S\setminus J\quad\overset\phi\cong\quad \ov S\setminus\{pt\}\\
&\phi(J)=\{pt\}=\{\text{an elliptic singularity of}\,\,\,\ov S\}
\endaligned
$$

\begin{defin} The pair $(\ov S, C)$ is a polarized Halphen surface (of 
genus $g$).
\end{defin}

\begin{prop} {\rm (Arbarello, Bruno, Sernesi \cite{ABS2}).}
$\ov S$ is a limit of smooth K3 surfaces in $\PP^g$.
\end{prop}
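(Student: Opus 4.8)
The plan is to exhibit $\ov S$ as the central fibre $X_0$ of a flat family $\{X_t\}_{t\in\Delta}$ of surfaces in $\PP^g$ whose general member $X_t$ is a smooth K3 surface. First I would analyse the singular point $q=\phi(J)$. Since $|C|$ is big and nef with $C\cdot J=0$ and $J$ is the unique curve contracted by $\phi_{|C|}$, the morphism $\phi$ is precisely the contraction of the elliptic curve $J$; as $J^2=-1$, the point $q$ is a simple elliptic singularity of degree $-J^2=1$, i.e. of type $\tilde E_8$. I would then record that the hyperplane sections are canonical: from $K_{S'}=-J'$ and $\sigma^*J'=J+E_{10}$ one gets $\omega_S=\mcO_S(-J)$, so adjunction together with $C\cdot J=0$ yields $\omega_C\cong\mcO_C(C)=\mcO_C(1)$ for a general $C\in|C|$. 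This canonicity is what will ultimately force the smooth limit to be a K3 surface rather than any other surface of non-positive Kodaira dimension.

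The heart of the argument is to produce a global smoothing of $\ov S$ inside $\PP^g$. I would work in the Hilbert scheme $\H$ of surfaces of degree $2g-2$ and sectional genus $g$ in $\PP^g$, and show that $[\ov S]$ lies in the closure of the (nonempty) locus parametrising smooth, projectively normal K3 surfaces. Locally the situation is favourable: the simple elliptic singularity $\tilde E_8$ is smoothable by Pinkham's theory of simple elliptic singularities, so the local deformation space $T^1_{\ov S}$ carries smoothing directions. The substance is global. I would show that these local smoothing deformations are unobstructed and lift to deformations of the whole surface, by controlling the local-to-global obstruction, i.e. by bounding $H^1$ of the relevant tangent/normal sheaf of $\ov S$; and that the resulting abstract smoothing is realised projectively, i.e. that the smoothing directions are tangent to $\H$ at $[\ov S]$. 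For the latter I would use that $\ov S$ is projectively normal with canonical hyperplane sections, so that a dimension count identifies the component of $\H$ through $[\ov S]$ with the one containing smooth K3 surfaces of genus $g$.

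It then remains to identify the general fibre. Canonicity of the hyperplane section is preserved in a flat family, the relative polarisation restricting to the relative dualising sheaf on the fibres; hence a general smooth hyperplane section $C_t\subset X_t$ satisfies $\omega_{C_t}\cong\mcO_{C_t}(1)$, and adjunction gives $\omega_{X_t}\cdot C_t=0$ with $\omega_{X_t}|_{C_t}\cong\mcO_{C_t}$. Since $X_t$ is a regular surface this forces $\omega_{X_t}\cong\mcO_{X_t}$, so $X_t$ is a K3 surface, and $\ov S=X_0$ is a limit of smooth K3 surfaces in $\PP^g$. The main obstacle is the global smoothing in the second paragraph: smoothability of the $\tilde E_8$ singularity is only the local input, and the real work is to prove that the smoothing is unobstructed globally and can be carried out while keeping the surface embedded in $\PP^g$ — it is here that the projective normality of $\ov S$ and the analysis of its normal bundle are indispensable.
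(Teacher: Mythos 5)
This Proposition is not proved in the paper at all: it is imported verbatim from \cite{ABS2}, so there is no internal proof to match your argument against. For the record, the construction in \cite{ABS2} is of a different nature from yours: instead of smoothing the singularity locally and then globalizing, one exhibits the degeneration directly through the Type II (Kulikov--Friedman) picture, producing a semistable degeneration of K3 surfaces whose central fibre is a union of two smooth rational surfaces glued along an elliptic curve, one component being $S$; contracting the other component yields $\ov S$ as the polarized limit. The present paper relies on exactly this shape of the degeneration: in the proof of Theorem \ref{Main-s+1-smooth} the family $\tau$ degenerating to $\ov S$ is said to have semistable model whose central fibre is ``the union of two smooth rational surfaces meeting on an elliptic curve'', and the contradiction with Kulikov's theorem is drawn from the monodromy of that model. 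So your route, even if completed, would not by itself feed into the monodromy argument later in the paper without re-deriving the semistable model of your family.

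More importantly, your proposal has a genuine gap at its heart: the global, embedded smoothing is announced but not proved, and the two justifications you sketch would not work as stated. First, the obstruction spaces are misidentified: for abstract deformations of $\ov S$ the local-to-global obstruction is controlled by $H^2(\Theta_{\ov S})$, and for embedded deformations by $H^1(N_{\ov S/\PP^g})$; you compute neither. Second, even granting an abstract smoothing, extending the polarization along it is obstructed in $H^2(\mcO_{\ov S})$, which is \emph{one-dimensional} here: since $K_S=-J$ and the simple elliptic point has discrepancy $-1$, one gets $\omega_{\ov S}\cong\mcO_{\ov S}$, hence $h^2(\mcO_{\ov S})=h^0(\omega_{\ov S})=1$. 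This is the same phenomenon as for K3 surfaces, where a generic abstract deformation loses the polarization; so your appeal to ``canonicity is preserved in flat families'' presupposes the embedded family whose existence is the whole point. Third, the ``dimension count identifying the component of $\H$ through $[\ov S]$ with the one containing smooth K3 surfaces'' is circular: that $[\ov S]$ lies in the closure of the K3 locus is precisely the statement to be proved, and two components of equal dimension need not coincide. What is sound in your outline is the local analysis ($J$ smooth elliptic with $J^2=-1$ gives a simple elliptic singularity of degree $1$, type $\tilde E_8$, which is a smoothable hypersurface singularity by Pinkham) and the identification of the general fibre (a smooth surface in $\PP^g$ with canonical curve sections is a K3, by du Val's classical theorem, which also supplies the regularity of $X_t$ that you assert without proof); but these are the easy endpoints, not the bridge between them.
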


\begin{theorem} {\rm (Nagata).}\label{nagata}  Suppose  $p_1,\dots, p_9$ are  $k$-general. Let $D=d\ell-\sum\nu_iE_i$ be an effective divisor
with $d\leq 3k$ and 
such that $D\cdot J'=0$. Then $D=mJ'$ for some $m$.
\end{theorem}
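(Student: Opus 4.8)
The plan is to decompose an effective divisor $D$ orthogonal to $J'$ into its irreducible components and show that each component is forced to be $J'$ itself, so that $D=mJ'$. Note first that $D\cdot J'=0$ translates concretely into $\sum_{i=1}^9\nu_i=3d$, but the argument will use the geometric condition $D\cdot J'=0$ directly. The starting observation is that, under the unnodal hypothesis, the anticanonical curve $J'=-K_{S'}$ is nef (a standard feature of unnodal anticanonical rational surfaces; cf. \cite{Cantat-Dolgachev}): being the unique effective anticanonical divisor it meets every distinct irreducible curve non-negatively, and $J'^2=0$. Writing $D=\sum_i a_i\Gamma_i$ with $a_i\geq 1$ and $\Gamma_i$ distinct irreducible curves, the nefness of $J'$ together with $D\cdot J'=0$ forces $\Gamma_i\cdot J'=0$ for every $i$. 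I would thus reduce the whole problem to classifying irreducible curves $\Gamma$ with $\Gamma\cdot J'=0$.

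Next I would pin down the numerical class of such a $\Gamma$. By adjunction $p_a(\Gamma)=1+\frac{1}{2}(\Gamma^2+\Gamma\cdot K_{S'})=1+\frac{1}{2}\Gamma^2$, so $\Gamma^2=2p_a(\Gamma)-2\geq -2$. The value $\Gamma^2=-2$ is excluded because it would make $\Gamma$ a $(-2)$-curve, contradicting unnodality; and $\Gamma^2>0$ is excluded by the Hodge index theorem, since $\Gamma$ would then be orthogonal to the nonzero isotropic class $J'$. Hence $\Gamma^2=0$ and $p_a(\Gamma)=1$. Now $\Gamma$ and $J'$ are both isotropic and mutually orthogonal inside $\mathrm{NS}(S')$, which has signature $(1,9)$; since such a lattice contains no totally isotropic plane, $\Gamma$ and $J'$ are proportional, say $\Gamma\equiv cJ'$ in $\mathrm{NS}(S')_{\mathbb{Q}}$. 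Intersecting with $E_i$ gives $c=\Gamma\cdot E_i\in\ZZ$, and non-triviality of the effective class $\Gamma$ gives $c>0$, so $c$ is a positive integer.

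The role of the hypothesis $d\leq 3k$ is precisely to bring this proportionality into the range controlled by Halphen-generality. Since $\ell$ is nef and $\Gamma$ occurs in $D$, one has $3c=\Gamma\cdot\ell\leq D\cdot\ell=d\leq 3k$, whence $c\leq k$. By $k$-Halphen generality $h^0(S',\mcO_{S'}(cJ'))=1$, so the only effective divisor in the class $cJ'$ is $cJ'$ itself; as numerical and linear equivalence coincide on the rational surface $S'$, the reduced irreducible curve $\Gamma$ lies in $|cJ'|$, which forces $c=1$ and $\Gamma=J'$. Therefore every component of $D$ equals $J'$, and $D=mJ'$ for some $m\geq 0$, as claimed.

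I expect the main obstacle to be the second step, ruling out every numerical class for the components except the multiples of $J'$: this is where the unnodal hypothesis (to eliminate $(-2)$-curves) and the Hodge index theorem (to eliminate positive self-intersection) must be combined correctly, and where one must use that numerical and linear equivalence agree on $S'$ so that the cohomological input $h^0(cJ')=1$ is applicable. The clean bookkeeping $3c=\Gamma\cdot\ell\leq d\leq 3k$, which is exactly what licenses the invocation of $k$-Halphen generality, is the decisive point.
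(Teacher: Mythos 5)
Your argument is correct, but there is nothing in the paper to measure it against: Theorem \ref{nagata} is quoted as a result of Nagata and used as a black box, with no proof supplied. What you have written is the standard reconstruction, and the main steps all check out. Since $\Gamma\cdot K_{S'}=-\Gamma\cdot J'=0$, adjunction gives $\Gamma^2=2p_a(\Gamma)-2$, which is even and $\geq -2$ (so the case $\Gamma^2=-1$, which your case list skips, is excluded automatically by parity); unnodality rules out $\Gamma^2=-2$; the Hodge index theorem rules out $\Gamma^2>0$, since $J'$ would be a nonzero isotropic vector in the negative definite lattice $\Gamma^\perp$; signature $(1,9)$ admits no totally isotropic plane, so $\Gamma\equiv cJ'$, with $c=\Gamma\cdot E_i\in\ZZ$ and $c>0$ because an irreducible curve is not numerically trivial; numerical and linear equivalence coincide on the rational surface $S'$; and the estimate $3c=\Gamma\cdot\ell\leq D\cdot\ell=d\leq 3k$ (using that $\ell$ is nef and $D-\Gamma$ is effective) places $c$ exactly in the range where $k$-Halphen generality gives $h^0(S',\mcO_{S'}(cJ'))=1$, whose unique member $cJ'$ is non-reduced unless $c=1$, forcing $\Gamma=J'$.

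The one step you should state more carefully is the nefness of $J'$, which you attribute to unnodality. The absence of $(-2)$-curves alone does not make the anticanonical curve nef when $J_0$ is reducible. For instance, take $p_1,p_2$ on a line $L$ and $p_3,\dots,p_9$ on a conic $Q$: the cubic through the nine points is unique, namely $J_0=L+Q$, but the proper transform $Q'$ is a $(-3)$-curve with $Q'\cdot J'=-1$, with no $(-2)$-curve involved; moreover $D=Q'+E_9=2\ell-E_3-\cdots-E_8$ is effective with $D\cdot J'=0$ and $d=2$, yet is not a multiple of $J'$, and one checks that such a configuration is $1$-Halphen general (and, for a general such choice, unnodal). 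So integrality of $J_0$ is genuinely needed, both for nefness and for the statement itself. In the paper this is implicit in the standing hypotheses: $J$ is assumed smooth from Section 3 on, and for the Halphen surfaces actually used the pencil $|(s+1)J'|$ is base point free, so $-K_{S'}$ is nef (cf. \cite{Cantat-Dolgachev}). With $J'$ integral and $J'^2=0$, nefness and your component-wise vanishing $\Gamma_i\cdot J'=0$ are immediate, and the rest of your proof goes through as written.
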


\begin{defin} {\rm(Cantat-Dolgachev \cite{Cantat-Dolgachev}).}  Let $m$ be a positive integer.  Then $S'$ is a Halphen surface of index $m$ if $p_1,\dots, p_9$
are $k$-Halphen general for $k\leq m-1$ but are not $m$-Halphen general.

Equivalently, if:
$$
\aligned
i)&\qquad \mcO_{J'}(mJ')=\mcO_{J'}\,,\\
ii)&\qquad \mcO_{J'}(hJ')\neq\mcO_{J'}\,,
\quad 1\leq h\leq m-1
\endaligned
$$
If $S'$ is a Halphen surface of index $m$, 
we will also say that the blow up $S$ of $S'$ at $p_{10}(g)$
is a Halphen surface surface of index $m$
\end{defin}

\section{Genus-$(2s+1)$ polarised Halphen surfaces of index $(s+1)$}

From now on  $S'$ is a Halphen surface of index $s+1$, with $s \geq 6$, and $C$ is a du Val curve 
of genus $g=2s+1$ on $S'$. We  refer to  notation (\ref{notat1}) and  (\ref{notat2}), and we denote by $B$ the pencil $B=(s+1)J'$ on $S$. On $S$ we have:

$$
\aligned
J'&=J+E_{10}\,,\qquad J'^2=0 \,,\qquad J'\cdot J=0 \,,\qquad K_S=-J=-J'+E_{10}\\
F&:=E_9-E_{10}\,,\qquad F^2=-2\,,\qquad J'\cdot F=1\\
C&=gJ'+F \,,\qquad g=2s+1  \,,\qquad C_{|J}=\mcO_J \\
A&=sJ'+F=(g-s-1)J'+F\\
B&=(s+1)J'\,,\qquad C=A+B\\
\endaligned
$$
We set 
$$
B_{|C}=\xi\,,\qquad A_{|C}=\eta\
$$
We also assume that $J$ is smooth.

\subsection{Preliminary computations.}

\begin{prop}\label{calcoli} Suppose $S$ is a Halphen surface of index $(s+1)$. Then we have:

{\rm i)} $h^0(S,\mcO_S( B))=2\,,\qquad h^1(S,\mcO_S( B))=1\,,\qquad h^2(S,\mcO_S( B))=0$,

{\rm ii)} $h^0(S,\mcO_S( 2B))=3\,,\qquad h^1(S,\mcO_S(2 B))=2\,,\qquad h^2(S,\mcO_S( 2B))=0$,

{\rm iii)} $h^0(S,\mcO_S( 2B-J))=2\,,\qquad h^1(S,\mcO_S( 2B-J))=1\,,\qquad h^2(S,\mcO_S(2B -J))=0$,

{\rm iv)} $h^0(S,\mcO_S( A-B))=0\,,\qquad h^1(S,\mcO_S( A-B))=1\,,\qquad h^2(S,\mcO_S(A- B))=0$,

{\rm v)}  $h^0(S,\mcO_S( B-A))=0\,,\qquad h^1(S,\mcO_S( B-A))=1\,,\qquad h^2(S,\mcO_S( B-A))=0$,

\end{prop}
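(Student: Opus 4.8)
The computations all rest on three ingredients on the rational surface $S$: Riemann--Roch in the form $\chi(\mcO_S(D))=1+\tfrac12 D\cdot(D-K_S)$ with $K_S=-J$, Serre duality $h^2(D)=h^0(K_S-D)$, and the nefness of $J'$ (on $S$ it is the pullback of the irreducible anticanonical curve on $S'$, which is nef because it is irreducible with self-intersection $0$). Since $B=\sigma^*\big((s+1)J'\big)$ and $2B=\sigma^*\big(2(s+1)J'\big)$ are pulled back from $S'$ and $R^i\sigma_*\mcO_S=0$ for $i>0$, parts (i)--(ii) reduce to computing $h^\bullet\big(S',\mcO_{S'}(hJ')\big)$ for $h=s+1$ and $h=2(s+1)$. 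I would run the induction on $h$ with the restriction sequences
\be
0\rar \mcO_{S'}\big((h-1)J'\big)\rar \mcO_{S'}(hJ')\rar \mcO_{J'}(hJ')\rar 0,
\ee
using the defining property of the index: $\mcO_{J'}(J')$ has order exactly $s+1$ in $\Pic^0(J')$, so $\mcO_{J'}(hJ')$ has $h^0=h^1=1$ when $(s+1)\mid h$ and $h^0=h^1=0$ otherwise. Starting from $h^\bullet(\mcO_{S'})=(1,0,0)$ and using $h^2(\mcO_{S'}(hJ'))=h^0(\mcO_{S'}(-(h+1)J'))=0$, the long exact sequences give $h^0=1,\ h^1=0$ for $0\le h\le s$, then $h^0(\mcO_{S'}((s+1)J'))=2$, and finally $h^0=2,\ h^1=1$ constant over $s+1\le h\le 2s+1$.

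The only step that is not pure bookkeeping is the value $h^0(2B)=3$. From the sequence at $h=2(s+1)$ one reads off $h^0(2B)\le h^0\big((2s+1)J'\big)+1=3$; conversely the multiplication map $\mathrm{Sym}^2 H^0(B)\to H^0(2B)$ is injective because $S$ is integral (the ratio of two independent sections of $B$ is a non-constant, hence transcendental, rational function), so $h^0(2B)\ge 3$ and equality holds. Equivalently, the restriction $H^0(2B)\to H^0(\mcO_{J'})\cong\CC$ is surjective. With $h^0$ known and $h^2=0$, Riemann--Roch (both Euler characteristics equal $1$) yields $h^1(B)=1$ and $h^1(2B)=2$, completing (i) and (ii).

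For (iii), Serre duality gives $h^2(2B-J)=h^0(-2B)=0$. Restricting to $J$ through
\be
0\rar \mcO_S(2B-J)\rar \mcO_S(2B)\rar \mcO_J(2B)\rar 0,
\ee
I would note that $\mcO_J(2B)$ is trivial: since $\sigma$ restricts to an isomorphism $J\to J'$, the bundle $\mcO_J(J')$ is identified with $\mcO_{J'}(J')$, again of order $s+1$, and $(s+1)\mid 2(s+1)$. The restriction $H^0(2B)\to H^0(\mcO_J)\cong\CC$ is the one shown surjective in (ii), so $h^0(2B-J)=3-1=2$ and the connecting map vanishes; Riemann--Roch ($\chi=1$) then gives $h^1(2B-J)=1$.

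Finally (iv) and (v) are effectivity tests in the basis $\ell,E_1,\dots,E_{10}$ of $\Pic(S)$, in which $J'=3\ell-E_1-\cdots-E_9$ and $K_S=-3\ell+E_1+\cdots+E_{10}$. For (iv), $A-B=F-J'$ has $\ell$-coefficient $-3$, hence is not effective and $h^0(A-B)=0$; its Serre-dual class $K_S-(A-B)=2E_{10}-E_9$ satisfies $(2E_{10}-E_9)\cdot J'=-1<0$, so by nefness of $J'$ it is not effective and $h^2(A-B)=0$. For (v), $B-A=J'-F$ has $(J'-F)\cdot J'=-1<0$, so $h^0(B-A)=0$, while $K_S-(B-A)=E_9-2J'$ has $\ell$-coefficient $-6$ and so $h^2(B-A)=0$. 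In both cases $\chi=-1$, so Riemann--Roch forces $h^1=1$. The one genuinely non-formal point in the whole proposition is thus the equality $h^0(2B)=3$ in (ii); once the order of $\mcO_{J'}(J')$ and the nefness of $J'$ are recorded, everything else is a run through long exact sequences and Riemann--Roch.
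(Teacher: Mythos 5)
Your proof is correct: all the intersection numbers you use match the paper's conventions ($B\cdot J=0$, $F\cdot J'=1$, $(B-A)^2=-4$, $\chi(2B-J)=1$, $K_S=-J$), the torsion statement ``$\mcO_{J'}(J')$ has order exactly $s+1$'' is precisely the Cantat--Dolgachev definition of the index, and the passage between $S$ and $S'$ via $R^i\sigma_*\mcO_S=0$ is legitimate since $B$ and $2B$ are pullbacks. The overall skeleton (restriction sequences, Riemann--Roch, Serre duality) is the paper's, but three of your key steps take a genuinely different route. First, the paper gets $h^0(S,B)=2$ in (i) by invoking base-point freeness of the Halphen pencil $|(s+1)J'|$ on $S'$, a structural fact about Halphen surfaces; your induction on $h$ through the sequences $0\to\mcO_{S'}((h-1)J')\to\mcO_{S'}(hJ')\to\mcO_{J'}(hJ')\to 0$ derives it directly from the definition of the index, which is more self-contained. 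Second, for the one non-formal point, $h^0(2B)=3$, the paper restricts to a moving member $B$ of the pencil via $0\to\mcO_S(B)\to\mcO_S(2B)\to\mcO_B(2B)\to 0$, using $\mcO_B(2B)\cong\mcO_B$ and base-point freeness for surjectivity of the restriction; your lower bound via injectivity of $\mathrm{Sym}^2H^0(B)\to H^0(2B)$ (the function-field argument for a pencil on an integral surface) is an equally valid substitute, and it hands you for free the surjectivity of $H^0(2B)\to H^0(\mcO_J)$ needed in (iii) --- note that the paper's displayed reduction for (iii) reads $h^0(2B-J)=h^0(B)-1$, an evident typo for $h^0(2B)-1$, so your version is in fact the corrected one. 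Third, in (v) the paper proves $h^0(B-A)=0$ by peeling off $2E_{10}$ from $B-A=J+2E_{10}-E_9$ and reducing to $h^0(\mcO_S(J-E_9))=0$ using irreducibility of $J$, whereas you simply intersect with the nef class $J'$: since $(B-A)\cdot J'=-1<0$ and $J'$ is the pullback of an irreducible curve of self-intersection zero, non-effectivity is immediate; your argument is shorter and avoids the restriction to the non-reduced scheme $2E_{10}$, at no cost in generality.
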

\begin{proof}
By hypothesis, the pencil $(s+1)J'$ on $S'$ is base-point free, and since $S$ is Halphen of index $(s+1)$ we have
$$
h^0(S, B)=h^0(S', \mcO_{S'}((s+1)J'))=2\,
$$
On the other hand $h^2(S, B)=0$, and from the Riemann-Roch theorem on $S$  we get i).

We also have $h^2(S, \mcO_{S}(2B-J))=h^2(S, \mcO_{S}(2B))=0$. 
Since $ \mcO_{J}(2B)=\mcO_J$, and $|B|$ is base-point free,
from the exact sequence
$$
0\to \mcO_{S}(2B-J)\longrightarrow \mcO_{S}(2B)
\longrightarrow \mcO_{J}(2B) \to 0
$$
we get that 
$h^0(S, \mcO_{S}(2B-J))=h^0(S, \mcO_{S}(B))-1$
and $h^1(S, \mcO_{S}(2B-J))=h^1(S, \mcO_{S}(B))-1$.

From
$$
0\longrightarrow \mcO_{S}(B) \longrightarrow \mcO_{S}(2B)
\longrightarrow\mcO_{B}(2B)\longrightarrow 0
$$
and again base-point freeness of $|B|$, we deduce
$$
h^0(S, \mcO_{S}(2B))=3\,,\quad h^1(S, \mcO_{S}(2B))=2\,,\quad h^2(S, \mcO_{S}(2B))=0\,,\quad
$$
and
$$
h^0(S, \mcO_{S}(2B-J))=2\,,\quad h^1(S, \mcO_{S}(2B-J))=1\,,
$$
yielding  ii) and iii).
We finally prove iv) and v).
We have
$$
B-A=J'-F=J+2E_{10}-E_9\,,\qquad B-A-J=2E_{10}-E_9
$$
so that 
$$
h^0(S, \mcO_{S}(A-B))=h^2(S, \mcO_{S}(B-A-J))=0 \qquad h^2(S, \mcO_{S}(B-A))=h^0(S, \mcO_{S}(A-B-J))=0
$$
From the Riemann-Roch theorem
$$
\chi(S, \mcO_{S}(B-A))=h^0(S, \mcO_{S}(B-A))-h^1(S, \mcO_{S}(B-A))=1+\frac{(B-A)^2}{2}=1-2=-1
$$
Since  $J$ is irreducible, from
$$
0\longrightarrow\mcO_S(J-E_9)\longrightarrow\mcO_S(J-E_9+2E_{10})\longrightarrow \mcO_{2E_{10}}(J-E_9+2E_{10})\longrightarrow0
$$
we get
$$
h^0(\mcO_S(B-A))=h^0(\mcO_S(J-E_9))=0
$$
so that
$$
h^1(\mcO_S(B-A))=1
$$
Finally $h^0(S, \mcO_S(A-B))=h^2(S, \mcO_S(A-B))=0$, and, again from the Riemann-Roch theorem we get $h^1(\mcO_S(A-B))=1$.
\end{proof}

\begin{prop}\label{A} 
$|A|$ is a base point free linear system of  du Val curves of genus $s$, whose general element is Brill-Noether general.
The pair $(\ov S, A)$ is a polarized Halphen surface of genus $s$ which is cut out by quadrics. In particular:

{\rm i)} $h^0(S, \mcO_S(A))=s+1\,,\qquad h^1(S, \mcO_S(A))=1\,,\qquad h^2(S, \mcO_S(A))=0$,

{\rm ii)}  $h^0(S, \mcO_S(A-J))=s\,,\qquad h^1(S, \mcO_S(A-J))=0\,,\qquad h^2(S, \mcO_S(A-J))=0$,

{\rm iii)} $h^0(S, \mcO_S(2A))=4s-2\,,\qquad h^1(S, \mcO_S(2A))=1\,,\qquad h^2(S, \mcO_S(2A))=0$.

Finally, if $A \in |A|$ is a reducible member, then $A=kJ'+(k-1)E_{10}+A_{s-k}$ where $k \geq 1$
and $A_{s-k}=(s-k)J'+E_9$ is a du Val integral curve of genus $s-k$. (The linear system $|A_{s-k}|$ will have a base point $p_{10}(s-k)$.)
\end{prop}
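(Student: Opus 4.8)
The plan is to identify $A=sJ'+F$ with the genus-$s$ du Val class and to use the index-$(s+1)$ hypothesis to realise $(\ov S,A)$ as a polarised Halphen surface of genus $s$. Pushing forward by $\sigma$, the class $\sigma_*A=sJ'+E_9$ is exactly the genus-$s$ du Val class $C'_s$ on $S'$, so a member of $|A|$ is the strict transform of a member of $|C'_s|$. The decisive computation is the restriction to the contracted curve $J$: from $C=A+B$ with $B=(s+1)J'$ and the defining relation $\mcO_{J'}((s+1)J')=\mcO_{J'}$ of index $s+1$, one finds $B_{|J}=\mcO_J$ (via the isomorphism $\sigma_{|J}:J\to J'$), whence $A_{|J}=C_{|J}-B_{|J}=\mcO_J$. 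Equivalently, the base point $p_{10}(g)=C'\cap J'$ coincides with the base point $p_{10}(s)=C'_s\cap J'$ of the genus-$s$ system. This is precisely what forces every member of $|C'_s|$ through $p_{10}(g)$, so that after blowing it up $|A|$ becomes the genus-$s$ du Val system, base point free (its unique base point having been resolved), with smooth genus-$s$ du Val general member, and $A\cdot J=0$, so that $\phi_{|A|}$ contracts $J$ to the same elliptic point of $\ov S$. Thus $(\ov S,A)$ is a polarised Halphen surface of genus $s$. Brill--Noether generality of the general member then follows from Theorem \ref{duVal-BN} applied with $g$ replaced by $s$ (the nine points being $s$-general), and quadric generation from the K3 limit together with Saint-Donat's theorem, the general hyperplane section being Brill--Noether general, hence of maximal Clifford index and in particular neither trigonal nor a plane quintic.

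For the numerical assertions I would combine Riemann--Roch on $S$ with restriction to $J$. Since $A-J=(s-1)J'+E_9=\sigma^*C'_{s-1}$ is the pullback of the genus-$(s-1)$ du Val class, $h^0(S,\mcO_S(A-J))=h^0(S',\mcO_{S'}(C'_{s-1}))=s$, while $K_S-(A-J)=-A$ is not effective so $h^2(S,\mcO_S(A-J))=0$; Riemann--Roch then gives $h^1(S,\mcO_S(A-J))=0$, which is (ii). Plugging this into
$$0\lra\mcO_S(A-J)\lra\mcO_S(A)\lra\mcO_J\lra0$$
(using $A_{|J}=\mcO_J$) yields (i): the vanishing $h^1(A-J)=0$ makes $H^0(\mcO_S(A))\to H^0(\mcO_J)=\CC$ surjective, so $h^0=s+1$, and the tail of the sequence identifies $H^1(\mcO_S(A))\cong H^1(\mcO_J)=\CC$ and gives $h^2(\mcO_S(A))=0$. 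For (iii) I would run the analogous sequence $0\to\mcO_S(2A-J)\to\mcO_S(2A)\to\mcO_J\to0$ (again $2A_{|J}=\mcO_J$); the one nontrivial input is the vanishing $h^1(S,\mcO_S(2A-J))=0$, which I would obtain by a further restriction to $J$ (where $2A-J$ has degree $1$) and Riemann--Roch, and then Riemann--Roch on $S$ forces $h^0(2A)=4s-2$ and $h^1(2A)=1$.

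Finally, for a reducible $D\in|A|$ I would separate the unique component $\Gamma_0$ with $\Gamma_0\cdot J'=1$ (there is exactly one, of multiplicity one, since $A\cdot J'=1$ and $J'$ is nef) from the vertical part $V$, which satisfies $V\cdot J'=0$ and $\deg V\le 3s$. Nagata's theorem (Theorem \ref{nagata}), valid since the points are $s$-general, forces $V$ to be a multiple of $J'$; hence on $S'$ the corresponding reducible member of $|C'_s|$ is $kJ_0+C'_{s-k}$ with $J_0\in|-K_{S'}|$, $C'_{s-k}$ an integral du Val curve of genus $s-k$, and $k\ge1$. Blowing up $p_{10}(g)$, taking strict transforms and bookkeeping the multiplicity $k$ of the member at $p_{10}(g)$ produces the asserted decomposition, the vertical contribution being $k$ copies of the strict transform of $J_0$ together with $(k-1)E_{10}$; since $C'_{s-k}$ carries its own base point $p_{10}(s-k)$, generically distinct from $p_{10}(g)$, the curve $A_{s-k}=(s-k)J'+E_9$ is disjoint from $E_{10}$. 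The main obstacle is the opening identification $A_{|J}=\mcO_J$: this is the step that converts the arithmetic of the Halphen index into the genus-$s$ du Val geometry and underlies every subsequent claim; the careful multiplicity bookkeeping at $p_{10}(g)$ in the reducible case, and the transfer of quadric generation from the smooth K3 limit, are the remaining delicate points.
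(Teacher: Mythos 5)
Your argument tracks the paper's own proof closely on all the structural points: the pivot is the same identity $A_{|J}=\mcO_J$, obtained from $B_{|J}=C_{|J}=\mcO_J$ and the index-$(s+1)$ relation, whence $p_{10}(g)=p_{10}(s)$, base-point-freeness, and the genus-$s$ du Val structure of $|A|$; Brill--Noether generality comes from Theorem \ref{duVal-BN} for $s$-general points, exactly as in the paper; your (i)--(ii) via the identification $A-J=\sigma^*C'_{s-1}$ and the restriction sequence to $J$ is a clean variant of the paper's computation from $0\to\mcO_S(A-J)\to\mcO_S(A)\to\mcO_J(A)\to0$; and your treatment of reducible members (isolate the unique component $Y'$ with $Y'\cdot J'=1$, apply Nagata's Theorem \ref{nagata} to $A'-Y'$) is the paper's argument, with multiplicity bookkeeping at $p_{10}$ that is if anything more careful than the paper's --- and note $p_{10}(s-k)\neq p_{10}(g)$ always, not just generically, since $\mcO_{J'}(J')$ has order exactly $s+1$ and $1\leq k<s$.

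There is, however, one genuine gap and one fixable slip. The gap is quadric generation: you propose to transfer it to $\ov S$ from nearby smooth K3 surfaces via Saint-Donat. This inference goes the wrong way: being cut out by quadrics is an \emph{open} condition in a flat family, not a closed one --- $h^0(\I_{X_t}(2))$ can jump at the special fibre, and the limits of the quadrics through the general fibres may cut out a strictly larger scheme than the special fibre (a limiting trisecant line already destroys quadric generation). So the ``transfer from the K3 limit'' that you yourself flag as delicate is in fact invalid as stated, not merely delicate. The paper argues intrinsically and in the opposite logical direction: the general $A\in|A|$ is a Brill--Noether--Petri canonical curve of genus $s\geq 6$, so by Petri's theorem $A\subset\PP^{s-1}$ is cut out by quadrics, and this lifts to $\ov S\subset\PP^{s}$ via quadratic normality, i.e.\ surjectivity of $S^2H^0(S,\mcO_S(A))\to H^0(S,\mcO_S(2A))$ (spelled out in Proposition \ref{qhull}; this is where the value $h^0(2A)=4s-2$ from (iii) actually gets used). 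The slip is in (iii): the vanishing $h^1(S,\mcO_S(2A-J))=0$ is true, but ``a further restriction to $J$'' only yields $h^1(2A-J)\leq h^1(2A-2J)$, since $(2A-kJ)\cdot J=k>0$ kills the $h^1$ on $J$ at every stage, so the chain of inequalities never terminates in a vanishing. Either restrict to a general member of $|A|$ instead, using $\deg\,(2A-J)_{|A}=4s-4>2s-2$ (nonspecial) together with $h^1(A-J)=0$ from your (ii), or follow the paper, which sidesteps this vanishing altogether by sandwiching $h^1(2A)$ directly: it is $\geq 1$ from the restriction to $J$, and $\leq h^1(A)=1$ from $0\to\mcO_S(A)\to\mcO_S(2A)\to\mcO_A(2A)\to0$ with $\mcO_A(2A)$ nonspecial.
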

\begin{proof}
Since $B_{|J}=C_{|J}=\mcO_J$
we also have
\be\label{a-rstr-j}
A_{|J}=\mcO_J
\ee
Hence $|A|$ is a base-point-free linear system of du Val curves of genus $s$. 
A consequence of (\ref{a-rstr-j}) is that
$$
p_{10}(g)=p_{10}(s)
$$
A general member of $|A|$  is Brill-Noether general by Theorem \ref{duVal-BN}, since $S$ is unnodal and of index $s+1$.
As for all du Val curves, we get i) and ii) from the exact sequence:
$$
0\longrightarrow \mcO_S(A-J) \longrightarrow \mcO_S(A)
\longrightarrow \mcO_J(A) \longrightarrow 0
$$
 
As far as  iii) is concerned,  consider the sequence
$$
0\longrightarrow \mcO_S(2A-J)
\longrightarrow \mcO_S(2A)
\longrightarrow \mcO_J(2A)
\longrightarrow0
$$
Since $\mcO_J(2A)=\mcO_J$, and $h^2(S, \mcO_S(2A-J))=h^2(S, \mcO_S(2A))=0$, we get 
 $h^1(S, \mcO_S(2A))\neq0$. On the other hand, if we restrict $2A$ to $A$ 
and if we notice that $2A_{|A}$ is not special, we get a surjection
$H^1(S, \mcO_S(A)) \longrightarrow H^1(S, \mcO_S(2A))$. Recall from i) that $h^1(S, \mcO_S(A))=1$, so that  $h^1(S, \mcO_S(2A))=1$. From
the Riemann-Roch theorem we then obtain
$h^0(S, \mcO_S(2A))=2+2A^2=4s-2$.
Since $s \geq 5$, and since the curve  $A\subset \PP^{s-1}$ is Brill-Noether general, it must be cut out by quadrics. On the other hand, $A$ is the hyperplane section of the surface 
  $\ov S \subset \PP (H^0(S, \mcO_S(A)))=\PP^s$, so that $\ov S$ must be cut out by quadrics as well. Let us now come to the last point of the Proposition.
Let $A \in |A|$ a reducible element and consider the blow-up 
$\sigma: S \longrightarrow S'$
of $S'$ at $p_{10}$.
Let us write $A=\sigma^*(A') -E_{10}$, and let $Y' \subset S'$ be the irreducible component of $A'$ intersecting $J'$, i.e. $J'\cap Y'=p_{10}$.
We then  have  $(A'-Y') \cdot J'=0$, and since the  points $p_1, \ldots, p_9$ are $s$-general, from
Theorem \ref{nagata} we get that $(A'-Y')=kJ'$ for some $k \geq 1$. On $S$ we have:
$$
sJ'+F= A=kJ'+\sigma^*(Y') -E_{10}.
$$
where $\sigma^*(Y') -E_{10}$ is effective, so that $s>k$.
Thus $A_{s-k}:=\sigma^*(Y') -E_{10}$ is a du Val curve of genus $s-k$. 

\end{proof}
\begin{prop}\label{qhull} We have: 
$B_{|C}=\xi=g^1_{s+1}$ and  $A|_C= \eta=g^s_{3s-1}$. Both $\xi$ and $\eta$ are 
 base point free and 
the quadratic hull of $\phi_{\eta}(C) \subset \PP(H^0(C, \mcO_C(\eta)))=\PP(H^0(S, \mcO_S(A)))=\PP^s$ is $\ov S$.
\end{prop}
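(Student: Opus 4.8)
The plan is to dispatch the numerical claims about $\xi$ and $\eta$ by restricting the surface line bundles $B$ and $A$ to $C$, using the cohomology already computed in Propositions \ref{calcoli} and \ref{A}, and then to reduce the quadratic hull statement to the fact that $\ov S$ is cut out by quadrics (Proposition \ref{A}).

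First I would record the degrees as intersection numbers: $\deg\xi=B\cdot C=(s+1)J'\cdot(gJ'+F)=s+1$ and $\deg\eta=A\cdot C=(sJ'+F)\cdot(gJ'+F)=3s-1$. For $h^0(C,\xi)$ I would restrict $B$ to $C$ through the sequence $0\to\mcO_S(-A)\to\mcO_S(B)\to\mcO_C(\xi)\to0$, using $B-C=-A$. Here $h^0(S,-A)=0$ since $A$ is effective, and by Serre duality $h^1(S,-A)=h^1(S,K_S+A)=h^1(S,A-J)=0$ by Proposition \ref{A}(ii). Hence the restriction $H^0(S,B)\to H^0(C,\xi)$ is an isomorphism and $h^0(C,\xi)=h^0(S,B)=2$ by Proposition \ref{calcoli}(i), so $\xi$ is a $g^1_{s+1}$; it is base point free because it is the restriction of the base-point-free pencil $|B|$ and every section of $\xi$ lifts to $S$.

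Next, for $\eta$ I would use $0\to\mcO_S(-B)\to\mcO_S(A)\to\mcO_C(\eta)\to0$, since $A-C=-B$. Again $h^0(S,-B)=0$, and the point is the vanishing $h^1(S,-B)=0$, which I would get from Riemann-Roch: $\chi(\mcO_S(-B))=1+\tfrac12(-B)\cdot(-B+J)=1$ (as $B^2=B\cdot J=0$), while $h^2(S,-B)=h^0(S,B-J)=1$. The latter follows from $0\to\mcO_S(B-J)\to\mcO_S(B)\to\mcO_J(B)\to0$ together with $\mcO_J(B)=\mcO_J$, which holds because $B_{|J}=C_{|J}-A_{|J}=\mcO_J$ by the section setup and (\ref{a-rstr-j}). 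Thus $h^1(S,-B)=0$, the restriction $H^0(S,A)\to H^0(C,\eta)$ is an isomorphism, and $h^0(C,\eta)=h^0(S,A)=s+1$ by Proposition \ref{A}(i), so $\eta$ is a $g^s_{3s-1}$, base point free because $|A|$ is. This same isomorphism justifies the identification $\PP(H^0(C,\mcO_C(\eta)))=\PP(H^0(S,\mcO_S(A)))=\PP^s$ and shows that $\phi_\eta$ is the restriction to $C$ of $\phi_{|A|}$, so that $\phi_\eta(C)\subseteq\phi_{|A|}(S)=\ov S$.

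The substantive step is the quadratic hull, and its crux is the \emph{injectivity} of the restriction $H^0(S,2A)\to H^0(C,2\eta)$: its kernel is the image of $H^0(S,2A-C)=H^0(S,A-B)$, which vanishes by Proposition \ref{calcoli}(iv). Consequently, for a quadric $Q$ in $\PP^s$, its image under $H^0(\PP^s,\mcO(2))\to H^0(S,2A)\to H^0(C,2\eta)$ vanishes on $\phi_\eta(C)$ if and only if it already vanishes on $S$, i.e. if and only if $Q$ vanishes on $\ov S$. Thus the quadrics through $\phi_\eta(C)$ and the quadrics through $\ov S$ coincide, and since $\ov S$ is cut out by quadrics (Proposition \ref{A}), their common intersection is exactly $\ov S$, which is therefore the quadratic hull of $\phi_\eta(C)$. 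I expect the main obstacle to lie precisely in this last paragraph, specifically in reducing the coincidence of the two systems of quadrics to the injectivity $H^0(S,2A)\hookrightarrow H^0(C,2\eta)$ and thence to $h^0(S,A-B)=0$; the rest is routine intersection theory and restriction sequences.
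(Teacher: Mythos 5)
Your proposal is correct and follows essentially the same route as the paper: the paper likewise uses the restriction sequences twisted by $B-C=-A$, $A-C=-B$ and $2A-C=A-B$ together with Propositions \ref{calcoli} and \ref{A}, and reduces the quadratic hull claim to the injectivity (in fact isomorphism) of $H^0(S,\mcO_S(2A))\to H^0(C,\mcO_C(2\eta))$, killed kernel $H^0(S,\mcO_S(A-B))=0$, plus the fact that $\ov S$ is cut out by quadrics. The only cosmetic differences are that the paper gets $h^0(C,\mcO_C(\eta))=s+1$ from $h^0(C,\mcO_C(\xi))=2$ via Riemann--Roch and Serre duality on $C$ rather than your direct computation of $h^1(S,\mcO_S(-B))=0$, and it records the full isomorphism on quadrics where you correctly observe only injectivity is needed.
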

\begin{proof}
Since $B-C=-A$, 
from the exact sequence:
$$
0\longrightarrow \mcO_S(B-C)
\longrightarrow \mcO_S(B)
\longrightarrow \mcO_C(\xi)
\longrightarrow0
$$
using Proposition \ref{A} ii), and the Riemann-Roch, and Serre's duality theorems
we obtain  $h^0(C, \mcO_C(\xi))=2$.
From this we also deduce that $h^0(S,\mcO_S(A))=h^0(C, \mcO_C(\eta))=s+1$.
Since $|A|$ is a du Val system of curves of genus $s$ whose general member is Brill-Noether general, 
$|A|$ is also quadratically normal on $S$ and the image $\ov S \subset \PP (H^0(S,\mcO_S( A)))$
is generated by quadrics. Consider the exact sequence
$$
0\longrightarrow \mcO_S(2A-C)
\longrightarrow \mcO_S(2A)
\longrightarrow \mcO_C(2\eta)
\longrightarrow0
$$
From Proposition \ref{calcoli} iv),  Proposition \ref{A} iii) and the Riemann-Roch theorem on $C$, we get that the restriction map induces an isomorphism $H^0(S, \mcO_S(2A))\cong H^0(C, \mcO_C(2\eta))$ and that
$h^0(S, \mcO_S(2A))=h^0(C, \mcO_C(2\eta))=4s-2$.
From the surjective homomorphism
$$
S^2(H^0(C, \mcO_C(\eta))) =S^2(H^0(S, \mcO_S(A)))\longrightarrow H^0(S, \mcO_S(2A))=H^0(C, \mcO_C(2\eta))
$$
we see
that the intersection of all the quadrics containing $\phi_{\eta}(C)$, i.e. the quadratic hull of $(C, \eta)$, is $\ov S$.
\end{proof}

\subsection{On some extensions of torsion free sheaves on $S$.}

For $x \in S$ we want to study coherent sheaves on $S$ which are extensions:

\be\label{ext-abx}
0\longrightarrow\mcO_S(B) \longrightarrow\E_x\longrightarrow \I_x(A)\longrightarrow0
\ee

Such extensions are, a priori, only torsion free sheaves on $S$, and are classified by $\Ext^1(\I_x(A), \mcO_S(B))$.
From the local to global spectral sequence for the $\Ext$-functors we get an exact sequence:

$$
0\to H^1(\hom(\I_x(A),B)) \to \Ext^1(\I_x(A), \mcO_S(B))  \to H^0(\ext^1(\I_x(A), B)) \to H^2(\hom(\I_x(A), B))\to 0
$$

\begin{lem}\label{extI_x}
We have $\hom(\I_x(A), \mcO_S(B))=\hom(\mcO_S(A),\mcO_S( B))=\mcO_S(B-A)$ and $\ext^1(\I_x(A), \mcO_S(B))=\CC_x$, so that
$H^2(\hom(\I_x(A), \mcO_S(B)))=0$.
 Moreover, $\dim \Ext^1 (\I_x(A), \mcO_S(B))=2$
\end{lem}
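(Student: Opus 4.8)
The plan is to compute the two local sheaves $\hom(\I_x(A),\mcO_S(B))$ and $\ext^1(\I_x(A),\mcO_S(B))$ directly from the structure sequence of the point $x$, and then to feed the outcome into the local-to-global five-term exact sequence already displayed above the statement.

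First I would start from the twisted ideal-sheaf sequence
$$
0\longrightarrow \I_x(A)\longrightarrow \mcO_S(A)\longrightarrow \CC_x\longrightarrow 0
$$
and apply the contravariant functor $\hom(-,\mcO_S(B))$. This produces the long exact sequence of local $\ext$-sheaves
$$
0\to\hom(\CC_x,\mcO_S(B))\to\hom(\mcO_S(A),\mcO_S(B))\to\hom(\I_x(A),\mcO_S(B))\to\ext^1(\CC_x,\mcO_S(B))\to\cdots
$$
which then continues through the $\ext^1$ and $\ext^2$ terms.

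The next step is to compute the local $\ext$-sheaves of the two outer terms. Since $\mcO_S(A)$ is a line bundle, we have $\hom(\mcO_S(A),\mcO_S(B))=\mcO_S(B-A)$ and $\ext^i(\mcO_S(A),\mcO_S(B))=0$ for $i>0$. The real content is the computation for the skyscraper $\CC_x$: because $x$ is a smooth point of the surface $S$, its maximal ideal is generated by a length-two regular sequence, so locally at $x$ the sheaf $\CC_x$ is resolved by the Koszul complex $0\to\mcO_S\to\mcO_S^{\oplus 2}\to\mcO_S\to\CC_x\to 0$. Dualising this complex into the line bundle $\mcO_S(B)$ and taking cohomology yields $\ext^i(\CC_x,\mcO_S(B))=0$ for $i=0,1$ and $\ext^2(\CC_x,\mcO_S(B))=\CC_x$. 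This is the one place where the geometry (smoothness, hence codimension-two of $x$, so that the top local $\ext$ lives in degree $2$) genuinely enters, and it is where I expect any care to be required. Plugging these vanishings into the long exact sequence then reads off at once the isomorphisms $\hom(\I_x(A),\mcO_S(B))\cong\mcO_S(B-A)$ and $\ext^1(\I_x(A),\mcO_S(B))\cong\CC_x$, and in particular $H^2(\hom(\I_x(A),\mcO_S(B)))=H^2(S,\mcO_S(B-A))=0$ by Proposition \ref{calcoli} v).

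Finally, for $\dim\Ext^1(\I_x(A),\mcO_S(B))$ I would substitute into the five-term sequence already displayed. By Proposition \ref{calcoli} v) the term $H^1(\hom(\I_x(A),\mcO_S(B)))=H^1(S,\mcO_S(B-A))$ has dimension $1$; the term $H^0(\ext^1(\I_x(A),\mcO_S(B)))=H^0(S,\CC_x)$ has dimension $1$; and the last term $H^2(\hom(\I_x(A),\mcO_S(B)))$ vanishes as just noted. The five-term sequence thus collapses to
$$
0\longrightarrow\CC\longrightarrow\Ext^1(\I_x(A),\mcO_S(B))\longrightarrow\CC\longrightarrow 0,
$$
whence $\dim\Ext^1(\I_x(A),\mcO_S(B))=2$, completing the proof.
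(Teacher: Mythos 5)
Your proof is correct and takes essentially the same route as the paper: the Koszul resolution of the point to compute $\ext^i(\CC_x,\mcO_S(B))$, the long exact sequence of local $\ext$-sheaves from $0\to\I_x(A)\to\mcO_S(A)\to\CC_x\to 0$, and then the local-to-global five-term sequence combined with Proposition \ref{calcoli} v) to get $\dim\Ext^1(\I_x(A),\mcO_S(B))=2$. If anything, you are a bit more explicit than the paper, which records only $\ext^1(\mcO_x(A),\mcO_S(B))=0$ and leaves the identifications $\hom(\CC_x,\mcO_S(B))=0$ and $\ext^2(\CC_x,\mcO_S(B))=\CC_x$ implicit in the Koszul dualisation.
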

\begin{proof}

Since $S$ is regular, for $x \in S$, a locally free resolution of $\mcO_x$ is given by the Koszul complex
$$ 0 \to  \mcO_S \longrightarrow \mcO_S^2 \longrightarrow\mcO_S\longrightarrow\mcO_x \to 0$$
Applying $\hom(-, \mcO_S(B))$ and taking cohomology we get
$$
\ext^1(\mcO_x(A), \mcO_S(B))=0
$$
From $$ 0 \longrightarrow \I_x(A) \longrightarrow \mcO_S(A) \longrightarrow \mcO_{x}(A) \to 0
$$
we get
$$
\hom(\I_x(A), \mcO_S(B))=\hom(\mcO_S(A), \mcO_S(B))=\mcO_S(B-A) 
$$
It follows that $H^2(\hom(\I_x(A), \mcO_S(B)))=H^2(\hom(\mcO_S(A),\mcO_S(B)))=H^2(\mcO_S(B- A))=0$ from Proposition \ref{calcoli} v). 
Always from Proposition \ref{calcoli} v) we get 
$$
0\longrightarrow \CC =H^1(\hom(\I_x(A), \mcO_S(B))) \longrightarrow \Ext^1(\I_x(A), \mcO_S(B))  \longrightarrow H^0(\ext^1(\I_x(A), \mcO_S(B))) =\CC\longrightarrow 0
$$
\end{proof}
Notice that $H^1(\hom(\I_x(A), \mcO_S(B)))$ is naturally identified with 
$$
H^1(\mcO_S(B-A))=\Ext^1(\mcO_S(A),\mcO_S(B))
$$
We will show next that for every $x \in S$ the space of  isomorphism classes of non-split extensions (\ref{ext-abx}), which
can be identified with $\PP (\Ext^1(\I_x(A), \mcO_S(B)))$, contains exactly one extension which is not locally free and exactly
one extension with $s+2$ sections.

The next result is  a direct consequence of Theorem 5.1.1 in \cite{Huybrechts-Lehn}:

\begin{lem}\label{nlf}Extensions of the form (\ref{ext-abx}) which are not locally free
are parametrised by $\Ext^1(\mcO_S(A),\mcO_S(B))$. In particular, for every $x \in S$ there is, up to scalar, a unique non-split extension which is not locally free.
\end{lem}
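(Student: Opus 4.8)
The plan is to read off directly from the two-term filtration of $\Ext^1(\I_x(A),\mcO_S(B))$ exhibited in Lemma~\ref{extI_x} which classes yield a non-locally-free sheaf. Recall that the local-to-global sequence there reads
$$
0\to H^1(\hom(\I_x(A),\mcO_S(B)))\to \Ext^1(\I_x(A),\mcO_S(B))\xrightarrow{\ \rho\ } H^0(\ext^1(\I_x(A),\mcO_S(B)))\to 0,
$$
with $H^1(\hom(\I_x(A),\mcO_S(B)))=H^1(\mcO_S(B-A))\cong\CC$ and $H^0(\ext^1(\I_x(A),\mcO_S(B)))=\CC_x\cong\CC$. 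The first step is to identify the subspace $\ker\rho$. Since $\mcO_S(A)$ is locally free one has $\ext^i(\mcO_S(A),\mcO_S(B))=0$ for $i>0$ and $\hom(\mcO_S(A),\mcO_S(B))=\mcO_S(B-A)$, whence $\Ext^1(\mcO_S(A),\mcO_S(B))=H^1(\mcO_S(B-A))$, exactly the identification recorded after Lemma~\ref{extI_x}. Applying $\Hom(-,\mcO_S(B))$ to the structure sequence $0\to\I_x(A)\to\mcO_S(A)\to\mcO_x(A)\to0$ and using $\Ext^1(\mcO_x(A),\mcO_S(B))=0$ (Lemma~\ref{extI_x}) together with the local-duality computation $\Ext^2(\mcO_x(A),\mcO_S(B))\cong\CC$, I obtain an injection
$$
\alpha:\Ext^1(\mcO_S(A),\mcO_S(B))\hookrightarrow \Ext^1(\I_x(A),\mcO_S(B))
$$
with one-dimensional image. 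By naturality of the local-to-global spectral sequence with respect to $\I_x(A)\hookrightarrow\mcO_S(A)$, this image is precisely $\ker\rho=H^1(\hom(\I_x(A),\mcO_S(B)))$.

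The second, and main, step is the local-freeness criterion. Here I invoke Theorem~5.1.1 of \cite{Huybrechts-Lehn}: the middle sheaf $\E_x$ of (\ref{ext-abx}) is locally free if and only if the class $e\in\Ext^1(\I_x(A),\mcO_S(B))$ generates the stalk $\ext^1(\I_x(A),\mcO_S(B))_x\cong\CC$, i.e.\ if and only if $\rho(e)\neq0$. The one point to verify is the Cayley--Bacharach hypothesis in that theorem; but since the relevant subscheme is the single reduced point $x$ this condition is vacuous, so local freeness is equivalent to the nonvanishing of the local class $\rho(e)$. Consequently $\E_x$ fails to be locally free exactly when $e\in\ker\rho$, i.e.\ exactly when $e$ lies in the image of $\alpha$. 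This is precisely the assertion that the non-locally-free extensions are parametrised by $\Ext^1(\mcO_S(A),\mcO_S(B))$: such an $\E_x$ is the elementary modification obtained as the fibre product of a rank-two extension $0\to\mcO_S(B)\to G\to\mcO_S(A)\to0$ with $\I_x(A)\hookrightarrow\mcO_S(A)$, fitting into $0\to\E_x\to G\to\mcO_x(A)\to0$, hence locally free away from $x$ and singular exactly at $x$.

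Finally, $\Ext^1(\mcO_S(A),\mcO_S(B))=H^1(\mcO_S(B-A))$ is one-dimensional by Proposition~\ref{calcoli}~v). Its non-split classes therefore form a single point of $\PP(\Ext^1(\mcO_S(A),\mcO_S(B)))$, so that up to scalar there is a unique non-split extension (\ref{ext-abx}) that is not locally free. The hard part, I expect, will be pinning down the local-freeness criterion cleanly: one must check that the stated instance of Theorem~5.1.1 applies verbatim and that the identification $\im\alpha=\ker\rho$ is genuinely compatible with the isomorphism $H^1(\hom(\I_x(A),\mcO_S(B)))\cong\Ext^1(\mcO_S(A),\mcO_S(B))$ already used in Lemma~\ref{extI_x}. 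Once this bookkeeping is in place, the dimension count furnished by Proposition~\ref{calcoli}~v) closes the argument.
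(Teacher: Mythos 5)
Your argument is correct in substance and follows essentially the paper's own route: both rest on Theorem 5.1.1 of \cite{Huybrechts-Lehn} (more precisely on its proof, i.e.\ the local criterion that $\E_x$ is locally free exactly when its class has nonzero image in $H^0(\ext^1(\I_x(A),\mcO_S(B)))=\CC_x$), both identify the non-locally-free classes with $\ker\rho\cong H^1(\mcO_S(B-A))=\Ext^1(\mcO_S(A),\mcO_S(B))$, and both realise each such $\E_x$ as the elementary modification of the unique non-split extension (\ref{ext-ab}), which is exactly the paper's diagram (\ref{non-loc-free}).

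One assertion, however, is wrong as stated: the Cayley--Bacharach condition is \emph{not} vacuous for a single reduced point. For $Z=\{x\}$ the colength-one subscheme is $Z'=\emptyset$, so CB for the pair $(\{x\},\,\mcO_S(A)\otimes\mcO_S(B)^{-1}\otimes\omega_S)$ demands that \emph{every} global section of $\mcO_S(A-B-J)$ vanish at $x$; this is precisely what the paper checks, via $h^0(S,\mcO_S(A-B-J))=h^2(S,\mcO_S(B-A))=0$ from Proposition \ref{calcoli} v). The slip is harmless for your proof, for two reasons. First, the equivalence ``$\E_x$ locally free $\Leftrightarrow$ $\rho(e)\neq 0$'' is not a consequence of CB at all: it is the purely local statement from the proof of Theorem 5.1.1, valid here because the stalk $\ext^1(\I_x(A),\mcO_S(B))_x\cong\CC$ is one-dimensional, so that ``generates'' means ``nonzero''. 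Second, CB enters only to guarantee the \emph{existence} of a class with nonzero local component, and that is already secured by the surjectivity of $\rho$, i.e.\ by $H^2(\hom(\I_x(A),\mcO_S(B)))=0$ from Lemma \ref{extI_x} --- which is Serre-dual to the very same vanishing $h^0(\mcO_S(A-B-J))=0$. With ``vacuous'' replaced by this verification, your bookkeeping is sound: $\im\alpha=\ker\rho$ follows from naturality of the local-to-global sequence together with $\ext^1(\mcO_S(A),\mcO_S(B))=0$, both spaces are one-dimensional by Proposition \ref{calcoli} v) and Lemma \ref{extI_x}, a nonzero class in $\ker\rho$ yields $\E_x\subset\E$ with $\E/\E_x\cong\CC_x$ and $\E$ locally free, hence $\E_x^{\vee\vee}\neq\E_x$, so $\E_x$ is indeed non-split and not locally free, and uniqueness up to scalar follows.
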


\begin{proof}
Following Theorem 5.1.1 in \cite{Huybrechts-Lehn}, and using Proposition \ref{calcoli} v),  we see  that the cohomology group $H^0(S, \mcO_S(A-B-J))$ vanishes,  so that the Cayley-Bacharach property holds.
From the proof of the  Theorem 5.1.1 in \cite{Huybrechts-Lehn}, it then follows  that non-split extensions which are not locally free are all obtained from the
unique non-split extension 
\be\label{ext-ab}
0 \to \mcO_S(B) \to \E \to \mcO_S(A) \to 0
\ee
and sit in the diagram:
\be\label{non-loc-free}
\xymatrix
{
&&0\ar[d]&0\ar[d]\\
0\ar[r]&\mcO_S(B)\ar[r]\ar@{=}[d]&\E_x\ar[r]\ar[d]&\I_x(A)\ar[d]\ar[r]&0\\
0\ar[r]&\mcO_S(B)\ar[r]&\E\ar[r]\ar[d]&\mcO_S(A)\ar[r]\ar[d]&0\\
&&\CC_x\ar@{=}[r]\ar[d]&\CC_x\ar[d]\\
&&0&0\\
}
\ee
\end{proof}

We will denote by $y=p_{10}(s-1) \in J$  the unique base point of the  du Val linear system $|A-J|$.
The point $y$ will be relevant also in the proof of Lemma \ref{s2eb} below, and we will find it convenient to call it $p_{11}$.

\begin{lem} \label{h12} Consider an extension of type (\ref{ext-abx}).

a)  For every $x\in S$, we have $h^2(S, \E_x(-J))=0$.

b) If $x\neq y$, then $h^1(S, \E_x(-J))=0$ and $h^0(S, \E_x(-J))=s$.

c) If $x\neq y$, the restriction map $H^1(S, \E_x) \longrightarrow H^1(J,  {\E_x}{|_J})$
is an isomorphism,  and we have an exact sequence
$$
0\longrightarrow \CC^s\longrightarrow H^0(S, \E_x)\longrightarrow H^0(S, {\E_x}_{|J})\longrightarrow0
$$
d) If $x=y\in J$ then $h^1(S, \E_x(-J))=1$ and $h^0(S, \E_x(-J))=s+1$

\end{lem}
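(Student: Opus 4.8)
The plan is to extract all four statements from two short exact sequences and the cohomology already computed in Propositions \ref{calcoli} and \ref{A}. The first is the extension (\ref{ext-abx}) twisted by $-J$,
\[
0\longrightarrow\mcO_S(B-J)\longrightarrow\E_x(-J)\longrightarrow\I_x(A-J)\longrightarrow0\,,
\]
from which I would read off a), b) and d); the second is the restriction-to-$J$ sequence $0\to\E_x(-J)\to\E_x\to\E_x|_J\to0$, which yields c). A useful preliminary observation is that $H^1(\mcO_S(B-J))$ will vanish, so the connecting maps in the first sequence are forced and the dimension count is insensitive to the chosen extension class; this is what makes the lemma hold for \emph{every} $\E_x$ of type (\ref{ext-abx}).

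The two cohomological inputs to prepare are $h^\bullet(\mcO_S(B-J))$ and $h^\bullet(\I_x(A-J))$. For the first, I would restrict $0\to\mcO_S(B-J)\to\mcO_S(B)\to\mcO_J(B)\to0$, using $\mcO_J(B)=\mcO_J$ (as $B_{|J}=\mcO_J$) and the values $h^0(B)=2$, $h^1(B)=1$ from Proposition \ref{calcoli} i). Since $|B|$ is base-point free and $B\cdot J=0$, the curve $J$ is not a fixed component, so the restriction $H^0(B)\to H^0(\mcO_J)=\CC$ is surjective; together with $h^2(\mcO_S(B-J))=h^0(\mcO_S(-B))=0$ (Serre duality, $K_S=-J$) and Riemann-Roch this gives $h^\bullet(\mcO_S(B-J))=(1,0,0)$. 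For the second input I use $0\to\I_x(A-J)\to\mcO_S(A-J)\to\CC_x\to0$ with $h^\bullet(\mcO_S(A-J))=(s,0,0)$ from Proposition \ref{A} ii). Here the entire dichotomy of the lemma is governed by the evaluation at $x$: for $x\neq y$ the point $x$ is not a base point of $|A-J|$, so evaluation is surjective and $h^\bullet(\I_x(A-J))=(s-1,0,0)$, whereas for $x=y$ the evaluation is zero and $h^\bullet(\I_y(A-J))=(s,1,0)$.

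Feeding these into the long exact sequence of the twisted extension produces a), b) and d) simultaneously. For $x\neq y$ it reads $0\to\CC\to H^0(\E_x(-J))\to\CC^{s-1}\to0$ with $H^1(\E_x(-J))=0$, giving $h^0(\E_x(-J))=s$ and b); for $x=y$ the surviving $H^1(\I_y(A-J))=\CC$ propagates to $h^1(\E_y(-J))=1$ and $h^0(\E_y(-J))=s+1$, giving d); and in both cases the tail $H^2(\mcO_S(B-J))\to H^2(\E_x(-J))\to H^2(\I_x(A-J))$ has vanishing outer terms, giving a) for all $x$. Finally, for c) I take the long exact sequence of $0\to\E_x(-J)\to\E_x\to\E_x|_J\to0$ and insert b): the vanishing $H^1(\E_x(-J))=H^2(\E_x(-J))=0$ makes $H^1(S,\E_x)\to H^1(J,\E_x|_J)$ an isomorphism, while the term $H^0(\E_x(-J))=\CC^s$ yields the displayed four-term exact sequence.

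I expect the only genuine work to be the computation $h^\bullet(\mcO_S(B-J))=(1,0,0)$, as this divisor is not among those treated in Proposition \ref{calcoli}; the delicate point is the surjectivity of $H^0(B)\to H^0(\mcO_J)$, for which base-point freeness of $|B|$ together with $B\cdot J=0$ is essential. Everything else is bookkeeping with the two long exact sequences, provided the role of the base point $y$ of $|A-J|$ is tracked correctly in the cohomology of $\I_x(A-J)$.
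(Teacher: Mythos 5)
Your proof is correct and follows essentially the same route as the paper's: the twisted extension $0\to\mcO_S(B-J)\to\E_x(-J)\to\I_x(A-J)\to0$, with the dichotomy governed by whether $x$ equals the unique base point $y$ of $|A-J|$, yields a), b), d), and c) then follows from the restriction sequence $0\to\E_x(-J)\to\E_x\to\E_x|_J\to0$. The only cosmetic difference is that you derive $h^\bullet(\mcO_S(B-J))=(1,0,0)$ by restricting to $J$ and using base-point-freeness of $|B|$, whereas the paper asserts $h^1(S,\mcO_S(B-J))=0$ directly from the index-$(s+1)$ condition; both justifications are valid.
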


\begin{proof}
 a) For any $x \in S$, consider the exact sequence
\be\label{E-J}
0 \longrightarrow
\mcO_S(B-J) \longrightarrow
\E_x(-J) \longrightarrow
\I_x(A-J) \longrightarrow 0
\ee
Since $B-J$ and $A-J$ are effective, for any $x \in S$, we have $h^2(S, \E_x(-J))=0$.

b) From the exact sequence
$$
 0 \to I_x(A-J) \to \mcO_S(A-J) \to (A-J)_{|x} \to 0
 $$
and from Proposition \ref{A} ii), we get that $h^1(S, I_x(A-J))=0$, if and only if $x \neq y$.
Since $S$ is of index $(s+1)$, we have that $h^1(S, \mcO(B-J))=0$,  and hence $h^1(S, \E_x(-J))=0$, if and only if $x \neq y$.

c) Follows at once from a) and b).

d) If $x=y$, we have $h^0(I_y(A-J))=s$  and $h^1(I_y(A-J))=1$,
so that $h^1(S, \E_y(-J))=1$.  Since $\chi(S, \E_y(-J))=s$, we must  have $h^0(S, \E_y(-J))=s+1$,
\end{proof}

\begin{prop}\label{wt}

 For every $x \in S$ there exists a unique, up to a scalar, non-split extension (\ref{ext-abx})  such that
$h^0(S, \E_x)=s+2$. If $ x \notin J$ such an extension is a locally free sheaf and  $ {\E_x}_{|J}=\mcO_J^2$.
If $x \in J$ such an extension is not locally free.
\end{prop}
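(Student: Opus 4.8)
The plan is to view the extensions (\ref{ext-abx}) as a family over $\PP(W)$, with $W:=\Ext^1(\I_x(A),\mcO_S(B))$, and to locate the classes where $h^0$ jumps to $s+2$. Taking cohomology of (\ref{ext-abx}) and using $h^0(\mcO_S(B))=2$, $h^1(\mcO_S(B))=1$ (Proposition \ref{calcoli}) together with $h^0(\I_x(A))=s$ for every $x$ (base-point freeness of $|A|$, Proposition \ref{A}), one gets $h^0(S,\E_x)=s+2-\rk\delta_e$, where $\delta_e\colon H^0(\I_x(A))\to H^1(\mcO_S(B))$ is Yoneda multiplication by $e\in W$. Since $\dim W=2$ (Lemma \ref{extI_x}), the whole statement reduces to showing that $\ker\delta\subset W$ is one dimensional, that it is a locally free class (i.e. $\neq W_1:=H^1(\mcO_S(B-A))$, the non-locally-free direction of Lemma \ref{nlf}) when $x\notin J$, and that it equals $W_1$ when $x\in J$.

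First I would analyse the distinguished line $W_1$. For $e\in W_1\setminus\{0\}$ the sheaf $\E_x$ is the pullback of the global bundle $\E$ of (\ref{ext-ab}) along $\I_x(A)\hookrightarrow\mcO_S(A)$, so $0\to\E_x\to\E\to\CC_x\to0$. A preliminary step is $h^0(S,\E)=s+2$: by Serre duality ($K_S=-J$) and Proposition \ref{calcoli} the restriction $r\colon H^1(\mcO_S(B-A))\to H^1(\mcO_J)$ is an isomorphism, its cokernel lying in $H^2(\mcO_S(B-A-J))=H^0(\mcO_S(A-B))^{\vee}=0$; hence the global connecting map $H^0(\mcO_S(A))\to H^1(\mcO_S(B))$ has rank one with kernel $H^0(\mcO_S(A-J))$ (using also $H^1(\mcO_S(B))\cong H^1(\mcO_J)$), and the image of $H^0(\E)$ in $H^0(\mcO_S(A))$ is exactly $H^0(\mcO_S(A-J))$. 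Thus $h^0(\E_x)=s+2-\dim\mathrm{im}\bigl(H^0(\E)\to\CC_x\bigr)$, and since every section of $\mcO_S(A-J)$ vanishes along $J$, the evaluation at $x$ is zero precisely when $x\in J$. So the non-locally-free extension has $s+2$ sections iff $x\in J$ (and $s+1$ otherwise); in particular $W_1\subset\ker\delta$ for $x\in J$, which already gives existence there.

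For $x\notin J$ I would finish by restricting (\ref{ext-abx}) to $J$. By Lemma \ref{h12}(c), $h^0(\E_x)=s+h^0(\E_x|_J)$, and since $A|_J=B|_J=\mcO_J$ the restricted sequence is $0\to\mcO_J\to\E_x|_J\to\mcO_J\to0$; this has two sections iff it splits, i.e. $\E_x|_J=\mcO_J^{2}$, and the splitting classes form the kernel of $W\to\Ext^1_J(\mcO_J,\mcO_J)=H^1(\mcO_J)$. Because this map is already the isomorphism $r$ on $W_1$, its kernel is a line transverse to $W_1$, so the $s+2$ extension is unique, locally free, with $\E_x|_J=\mcO_J^{2}$. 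For $x\in J\setminus\{y\}$ the same Lemma \ref{h12}(c) applies, but now $\I_x(A)|_J$ carries a length-one torsion subsheaf $\CC_x$ (one checks $\mathcal{T}or_1(\I_x(A),\mcO_J)=0$, so $0\to\mcO_J\to\E_x|_J\to\I_x(A)|_J\to0$ stays exact); saturating $\mcO_J$ inside the rank-two bundle $\E_x|_J$ then yields a degree-one sub-line-bundle $\mcO_J(x)$ with degree $-1$ quotient, forcing $h^0(\E_x|_J)=1$ and $h^0(\E_x)=s+1$ for every locally free class. Hence $\ker\delta=W_1$: a unique, non-locally-free extension.

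The remaining, and main, difficulty is the point $x=y$, where Lemma \ref{h12}(c) fails since $h^1(\E_y(-J))=1$. I would argue directly with $0\to\E_y(-J)\to\E_y\to\E_y|_J\to0$: here $h^0(\E_y(-J))=s+1$ (Lemma \ref{h12}(d)) and the degree-one sub-line-bundle argument again gives $h^0(\E_y|_J)=1$, so $h^0(\E_y)=s+2-\rk\gamma$ with $\gamma\colon H^0(\E_y|_J)\to H^1(\E_y(-J))$ the connecting map. The crux is that $\gamma\neq0$ exactly for the locally free classes, so that only $W_1$ survives. Since $H^1(\E_y(-J))\cong H^1(\I_y(A-J))\cong\CC_y$ is supported at $y$ (because $y\in\mathrm{Bs}|A-J|$), this obstruction is local at $y$, and I expect to evaluate it from the local model of $\E_y$: for a locally free $\E_y$ the unique section of $\E_y|_J$ (which vanishes at $y$) fails to extend across $J$, giving $\gamma\neq0$ and $h^0(\E_y)=s+1$, whereas for the non-locally-free class the local torsion makes $\gamma$ vanish and $h^0(\E_y)=s+2$. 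This local computation of $\gamma$ at $y$ is the technical heart of the proof; semicontinuity over $J\setminus\{y\}$ only yields the one-sided bound and does not by itself prevent a jump at the single point $y$.
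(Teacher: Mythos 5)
Your argument tracks the paper's proof quite closely in structure and in most details. The reduction $h^0(S,\E_x)=s+2-\rk\delta_e$ is just the paper's criterion ``$h^0=s+2$ iff $h^1(\E_x)=2$'' in coboundary form; for $x\notin J$ the paper argues exactly as you do, via the restriction map $\rho:\Ext^1_S(\I_x(A),\mcO_S(B))\to\Ext^1_J(\mcO_J,\mcO_J)$, which is an isomorphism on the line $W_1=H^1(\mcO_S(B-A))$, so that $\ker\rho$ is a line transverse to the non-locally-free direction; and for $x\in J$ the paper makes the same local computation $\tor^1(\mcO_x,\mcO_J)=\CC_x$, $\tor^1(\I_x,\mcO_J)=0$ and the same splitting analysis of $\E_x|_J$ (namely $\mcO_J(x)\oplus\mcO_J(-x)$ for locally free classes, torsion $\CC_x$ plus $\mcO_J\oplus\mcO_J(-x)$ for the non-locally-free one). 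One place where you genuinely improve on the paper: your proof that $h^0(S,\E)=s+2$, by showing the connecting map $H^0(\mcO_S(A))\to H^1(\mcO_S(B))$ has rank one with kernel exactly $H^0(\mcO_S(A-J))$, is cleaner than the paper's Lemma \ref{e-prime}, whose second half (``the same diagram in the case in which (\ref{ext-j}) is split'') is problematic since diagram (\ref{non-loc-free}) only exists for classes in $W_1$, and for such classes the restriction to $J$ is never split when $x\notin J$. Your kernel statement is in fact exactly what the paper later invokes in the proof of Lemma \ref{tane}, and your evaluation-at-$x$ argument for the non-locally-free class (yielding $s+2$ sections iff $x\in J$) is sound.

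The one genuine gap is the point you yourself flag: $x=y$. You correctly observe that Lemma \ref{h12}(c) fails there, since $h^1(\E_y(-J))=1$ for every extension class, so that for a locally free class $h^0(\E_y)=s+1+\dim\ker\gamma$ with $\gamma:H^0(\E_y|_J)\cong H^0(\mcO_J(y))\to H^1(\E_y(-J))\cong\CC$, and uniqueness at $y$ amounts to $\gamma\neq 0$ for every class $e\notin W_1$. But you do not prove this: ``I expect to evaluate it from the local model'' is a plan, not an argument, and, as you note, semicontinuity over $J\setminus\{y\}$ gives only a one-sided bound and cannot exclude that $\ker\delta=W$ at the single point $y$ (equivalently, in the $\PP^1$-bundle picture, that all the sections $\delta(t)$ of $\mcO_{\PP(W)}(1)$, $t\in H^0(\I_y(A))=H^0(\mcO_S(A-J))$, vanish identically rather than only at $[W_1]$). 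So as submitted the proposal is incomplete at exactly this one point of $S$. It is worth noting that the paper's own written proof elides the same point: it shows $h^1(J,\E_x|_J)<2$ for locally free classes with $x\in J$, but converting this into $h^1(S,\E_x)<2$ requires the injectivity of $H^1(S,\E_x)\to H^1(J,\E_x|_J)$, which Lemma \ref{h12} supplies only for $x\neq y$. So your diagnosis of where the real difficulty sits is accurate and sharper than the paper's exposition; what is missing, in both texts but explicitly in yours, is the computation showing that the pairing $W\otimes H^0(\mcO_S(A-J))\to H^1(\mcO_S(B))$ is nonzero on some class outside $W_1$, i.e. a single pair $(e,t)$ with $\delta_e(t)\neq0$.
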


\begin{proof}
We first observe that $\chi (S, \E_x)=\chi(S, \E_x(-J))=s$
for every $x \in S$.
Since from Propostions \ref{calcoli} i) and \ref{A} i) we have $h^2(S, \E_x)=0$, it follows that
$h^0(S, \E_x)=s+2$ if and only if $h^1(S, \E_x)=2$.

 {\bf Case $x\notin J$}.
 
In this case, restricting an extension of the form  (\ref{ext-abx})
to $J$ we get an extension 
\be\label{ext-j}
0 \to \mcO_J \to {\E_x}{|_J} \to \mcO_J \to 0
\ee
yielding  a homomorphism
$$
\rho: \Ext^1_S(\I_x(A), \mcO_S(B))\longrightarrow\Ext^1_J(\mcO_J, \mcO_J)=H^1(J, \mcO_J)
$$
This homomorphism is  surjective. Indeed, look at the subspace
$$
 H^1(S, \mcO_S(B-A)) \cong  H^1(\hom(\I_x(A), \mcO_S(B))) \subset Ext^1_S(\I_x(A), \mcO_S(B))
 $$ 
on this subspace $\rho$ induces the restriction map 
$$
 H^1(S, \mcO_S(B-A)) \longrightarrow H^1(J,   \mcO_J(B-A))=H^1(J,   \mcO_J)
 $$
induced by $0 \to \mcO_S( B-A-J )\to\mcO_S( B-A) \to \mcO_J(B-A) \to 0$, which  is an isomorphism.
From (\ref{ext-j}), we see that 
$h^1(S, {\E_x}{|_J}) =2,\,\text{or}\, 1$, depending on whether  the extension class of ${\E_x}{|_J}$ is zero or non-zero. It follows 
that, up to a non-zero scalar,  there exists a unique extension $\E_x$ whose class is in the kernel of $\rho$. For such an extension we have
 $h^0(S, \E_x) = s+2$.
By Lemma (\ref{nlf}), the extension $\E_x$ is locally free because it does not come from a non zero element
of $H^1(S, \mcO_S(B-A))$. We interrupt the proof of  Proposition \ref{wt} to prove the following lemma.
\begin{lem}\label{e-prime}
 Consider the unique non-split extension (\ref{ext-ab}).
Then $h^0(S, \E)=s+2$.
\end{lem}
\begin{proof}Indeed, consider a diagram (\ref{non-loc-free}) where $x\notin J$,  and where (\ref{ext-j}) is non-split. We get  $h^1(S, {\E_x}{|_J}) =1$, so that, by Lemma \ref{h12}, $h^1(S, {\E_x}) =1$,
and, as a consequence, $h^0(S, {\E_x}) =s+1$.  From  diagram (\ref{non-loc-free})
it follows that $h^0(S, {\E}) \leq s+2$. Now look at the same diagram in the case in which
(\ref{ext-j}) is split. Then $h^0(S, \E_x) = s+2$, so that $h^0(S, {\E}) \geq s+2$.
\end{proof}

Let us resume the proof of Proposition \ref{wt}.

{\bf Case $x\in J$}.

In this case, from a local computation, we get that $\tor^1(\mcO_x, \mcO_J)=\CC_x$,
and $\tor^1(\I_x, \mcO_J)=0$. This means that
$$0 \to \mcO_J \to {\E_x}_{|_J} \to  {I_x}{|_J} \to 0$$ is exact and that there is a 
an exact sequence 
$$ 0 \to \tor^1(\mcO_x, \mcO_J) \cong \CC_x \to {I_x}{|_J} \to \mcO_J(-x) \to 0$$
Suppose first that $\E_x$ is locally free. Then ${\E_x}{|_J}$ is locally free as well, and by composition
we get a surjection of locally free sheaves 
$$
{\E_x}{|_J} \to \mcO_J(-x) \to 0
$$
Hence we have an extension 
$$
0 \to \mcO_J(x) \to  {\E_x}{|_J} \to \mcO_J(-x) \to 0
$$ 
which splits
since $h^1(J, \mcO_J(2x))$ vanishes. In particular $h^1(J, {\E_x}{|_J}) <2$, for all $x \in J$, whenever $\E_x$ is locally free.
Let us then consider an extension  of the form  (\ref{ext-abx}) which is not locally free. By diagram (\ref{non-loc-free}), the restriction ${\E_x}{|_J}$
is not torsion free, and since $\mcO_J$ is torsion free, the torsion subsheaf of ${\E_x}{|_J}$ is 
contained and hence isomorphic to $\CC_x$, the torsion subsheaf
of $ {I_x}{|_J}$.
Let $E'$ be the torsion free quotient of ${\E_x}{|_J}$. Then we have an exact sequence
$$
 0 \to \CC_x \to {\E_x}{|_J} \to E' \to 0
 $$ 
 and an extension $0 \to \mcO_J \to E' \to \mcO_J(-x)\to 0$
which is necessarily split because $h^1(J, \mcO_J(x))=0$. Then  $h^1(J, {\E_x}{|_J})=
h^1(S, {E'})=2$. From Lemma \ref{h12} a), we get $h^1(S, {\E_x})\geq2$, so that
$h^0(S, {\E_x})\geq s+2$. But $\E_x$ is a subsheaf of $\E$, thus, by Lemma \ref{e-prime},
we conclude that $h^0(J, {\E_x})=s+2$.

\end{proof}

\begin{defin}\label{wtdef}
For every $x \in S$, we will denote by
\be\label{etilde}
e_x:\qquad 0\longrightarrow\mcO_S(B)\longrightarrow\wt{\E}{}_x\longrightarrow \I_x(A)\longrightarrow 0
\ee
the unique non-split extension with $h^0(S, \wt\E_x)=s+2$,  given by Proposition (\ref{wt}).
\end{defin}\label{ate}
We now relativize this picture. We let $T$ be a copy of $S$. Let $p$ and $q$ be the projections of $S\times T$ to $S$ and $T$ respectively. Let $\Delta\subset S\times T$ be the diagonal. It is straightforward to see that $\ext^1_{S\times T}(\I_\Delta(p^*A), p^*(B))$
is a rank 2  locally free sheaf on $T$ whose fiber over $x\in T$ is $\Ext^1(\I_x(A), B)$. 
We denote by ${\bf P}$ the associated $\PP^1$-bundle. The association $x\mapsto [e_x]$ defines a section  $e: T\to {\bf P}$. 
Let $\phi$ and $\psi$ be the  projections from $S\times{\bf P}$ to ${S\times T}$ and ${\bf P}$, respectively. From Corollary 4.4 in \cite{Lange} we get
a universal extension over $S\times {\bf P}$
$$
0\longrightarrow\phi^*(p^*\mcO_S(B))\otimes\psi^*(\mcO_{\bf P}(1))\longrightarrow \E_{\bf P}\longrightarrow \phi^*(\I_\Delta(p^*\mcO_S(A)) \longrightarrow0
$$
If we identify $T$ with its image in ${\bf P}$ via the section $e$, we get an extension over $S\times T$
\be\label{ue}
0\longrightarrow p^*\mcO_S(B)\longrightarrow \wt\E_{T}\longrightarrow (\I_\Delta(\mcO_S(A))) \longrightarrow0
\ee
whose fiber over $x\in T$ is $e_x$ ( as in Definition \ref{wtdef}).

\vskip 0.5 cm 
\subsection{Stable vector bundles on $C$ with $s+2$ linearly independent  sections.}
In this section, for every $x \in S$, we consider, the restriction to $C$ of the sheaf sheaf $\wt{\E}{}_x$ defined in (\ref{wtdef}).
Let 
$$
E_x:={\wt{\E}_x}|_{C}
$$
We observed that if $x \notin J$, the sheaf $E_x$ is a locally free sheaf.  We will show that $h^0(C,E_x)=s+2$ and that $E_x$ is stable.

\begin{prop}\label{sections}
For all $x \in S$, $h^0(C, E_x)=s+2$. 
If $x,y \in J$ we have $E_x =E_y$.
\end{prop}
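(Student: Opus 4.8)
The plan is to compute $h^0(C,E_x)$ by restricting the surface sheaf $\wt\E_x$ to $C$ and comparing global sections, treating the two cases $x\notin J$ and $x\in J$ separately; the second assertion will essentially drop out of the geometry of the contraction $\phi_{|C|}$. For $x\notin J$ the sheaf $\wt\E_x$ is locally free of rank two with $\det\wt\E_x=\mcO_S(A+B)=\mcO_S(C)$, so $\wt\E_x^\vee\cong\wt\E_x(-C)$. I would first use the restriction sequence
$$0\longrightarrow \wt\E_x(-C)\longrightarrow \wt\E_x\longrightarrow E_x\longrightarrow 0,$$
and observe from the $(-C)$-twist of $(\ref{ext-abx})$, namely $0\to\mcO_S(-A)\to\wt\E_x(-C)\to\I_x(-B)\to0$, that $h^0(S,\wt\E_x(-C))=0$, since both $\mcO_S(-A)$ and $\I_x(-B)\subseteq\mcO_S(-B)$ have no sections.

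The crucial point is the vanishing $h^1(S,\wt\E_x(-C))=0$. I would not attack this directly but instead combine the self-duality above with Serre duality on $S$ (recall $K_S=-J$):
$$h^1(S,\wt\E_x(-C))=h^1(S,\wt\E_x^\vee)=h^1(S,\wt\E_x(-J)).$$
Since $y\in J$ while $x\notin J$, we have $x\neq y$, so Lemma \ref{h12}(b) gives exactly $h^1(S,\wt\E_x(-J))=0$. Feeding $h^0(S,\wt\E_x(-C))=h^1(S,\wt\E_x(-C))=0$ into the long exact sequence of the restriction shows that the restriction map $H^0(S,\wt\E_x)\to H^0(C,E_x)$ is an isomorphism, whence $h^0(C,E_x)=h^0(S,\wt\E_x)=s+2$ by the defining property of $\wt\E_x$ in Proposition \ref{wt}.

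For $x\in J$ I would first record that $C\cap J=\emptyset$: the finite scheme $C\cap J$ has length $C\cdot J=0$, which is also the content of $\phi_{|C|}$ contracting $J$ to the elliptic point away from the smooth section $C$. By Proposition \ref{wt} the sheaf $\wt\E_x$ is now the non-locally-free extension, which by the diagram $(\ref{non-loc-free})$ sits as a subsheaf $0\to\wt\E_x\to\E\to\CC_x\to0$ of the fixed bundle $\E$ of $(\ref{ext-ab})$. Tensoring this sequence with $\mcO_C$ and using $x\notin C$ (so that $\CC_x\otimes\mcO_C=0$ and $\tor^1(\CC_x,\mcO_C)=0$) yields $E_x=\wt\E_x|_C\cong\E|_C$ independently of $x\in J$; this already proves the second assertion $E_x=E_y$ for $x,y\in J$. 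Finally, restricting the locally free $\E$ (again of determinant $\mcO_S(C)$) by the same method gives $h^0(S,\E(-C))=0$ and, from $0\to\mcO_S(-A)\to\E(-C)\to\mcO_S(-B)\to0$ dualized, $h^1(S,\E(-C))=h^1(S,\E(-J))=0$, the latter read off from $0\to\mcO_S(B-J)\to\E(-J)\to\mcO_S(A-J)\to0$ using $h^1(S,\mcO_S(B-J))=0$ (index $s+1$) and $h^1(S,\mcO_S(A-J))=0$ (Proposition \ref{A}). Hence $H^0(S,\E)\cong H^0(C,\E|_C)$ and $h^0(C,E_x)=h^0(S,\E)=s+2$ by Lemma \ref{e-prime}.

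I expect the main obstacle to be locating the correct vanishing for $h^1(S,\wt\E_x(-C))$: a head-on computation through the twisted extension produces an a priori undetermined connecting map $H^1(S,\I_x(-B))\to H^2(S,\mcO_S(-A))$, and the efficient route is precisely to recognise $\wt\E_x(-C)=\wt\E_x^\vee$ and convert the question, via Serre duality, into the already-settled $h^1(S,\wt\E_x(-J))$ of Lemma \ref{h12}. The only other delicate point is the disjointness $C\cap J=\emptyset$, which is what forces the restriction of the non-locally-free $\wt\E_x$, for $x\in J$, to collapse onto the single bundle $\E|_C$ and thereby delivers both the uniform value $s+2$ and the equality $E_x=E_y$.
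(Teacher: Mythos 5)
Your proof is correct and follows essentially the same route as the paper: the identification $\wt\E_x^\vee=\wt\E_x(-C)$ plus Serre duality ($K_S=-J$) reducing everything to Lemma \ref{h12} when $x\notin J$, and the collapse $E_x=\E|_C$ via $J\cap C=\emptyset$ and diagram (\ref{non-loc-free}) when $x\in J$. The only differences are cosmetic improvements: you compute $h^0(S,\wt\E_x(-C))=0$ directly from the twisted extension rather than via $h^2(S,\wt\E_x(-J))=0$, and you write out the vanishing $h^1(S,\E(-J))=0$ that the paper explicitly leaves to the reader.
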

\begin{proof}
We first consider the case $x \notin J$. Since, in this case, $\wt \E_x$ is a locally free
sheaf of rank $2$ with determinant $C$, from Serre's duality we have :

$$
\wt\E_x^\vee=\wt\E_x(-C) \,,\qquad h^i(S, \wt\E_x^\vee)=h^i(S, \wt\E_x(-C))= h^{2-i}(S, \wt\E_x(-J))
$$

From Lemma (\ref{h12}) we have $h^i(S, \wt \E_x(-J)=0$, for $i \geq 1$, and then $h^i(S, {\wt\E}_x(-C))=0$, for $i \leq 1$.
We conclude the case $x \notin J$  by looking at the exact sequence
$$
0\longrightarrow \wt \E_x(-C) \longrightarrow \wt\E_x \longrightarrow E_x \longrightarrow 0
$$

Consider now the case $x \in J$. In this case $\wt\E_x$ is not locally free, and sits in a diagram
(see the proof of Lemma (\ref{nlf})):
\be\label{tors-free}
\xymatrix
{
&&0\ar[d]&0\ar[d]\\
0\ar[r]&\mcO_S(B)\ar[r]\ar@{=}[d]&\wt\E_x\ar[r]\ar[d]&\I_x(A)\ar[d]\ar[r]&0\\
0\ar[r]&\mcO_S(B)\ar[r]&\E\ar[r]\ar[d]&\mcO_S(A)\ar[r]\ar[d]&0\\
&&\CC_x\ar@{=}[r]\ar[d]&\CC_x\ar[d]\\
&&0&0\\
}
\ee
where $\E$ is the unique non-split extension of $\mcO_S(A)$ by $\mcO_S(B)$. 
In particular, since $J \cap C= \emptyset$  for all $x \in J$ we have that $E_x=\E|_C$.
From Remark \ref{e-prime}, we know that $h^0(S,\E)=s+2$.
Since $\E$ is a locally free
sheaf of rank $2$ with determinant $C$, we have from Serre's duality:

\be\label{vari-iso-e}
{\E}^\vee=\E(-C) \qquad h^i(S, {\E}^\vee)=h^i(S, \E(-C))= h^{2-i}(S, \E(-J))
\ee

We then need to prove that 
\be\label{he1}
h^i(S, \E(-C))=0\,,\quad  i \leq 1
\ee
i.e.  we need to show that 
\be\label{he2}
h^i(S, \E(-J))=0\,,\quad i \geq 1. 
\ee

This is done via a
 a computation which is similar but easier than the one in Lemma (\ref{h12}). We leave this to the reader.
\end{proof}

In order to prove stability of the locally free sheaf $E_x$ for all $x \in S$ we first prove an analogue  of Lemma 4.3 of \cite{ABS1}.

\begin{lem}\label{stab}Let $D\subset C\subset S$ be a finite closed subscheme of length $d\ge 1$.
Assume that
\begin{equation}\label{E:ineq1}
h^0(S, \mathcal{I}_D(A))\geq \mathrm{max} \left\{3,s- \frac{d-1}{2} \right\}
\end{equation}
 Then $d=1$.
\end{lem}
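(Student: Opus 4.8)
The plan is to restrict everything to a curve in $|A|$ passing through $D$ and to apply Brill–Noether theory on that curve. Since $h^0(S,\I_D(A))\ge 3$, the linear subsystem $|A|_D:=\PP(H^0(S,\I_D(A)))\subset|A|$ of members containing $D$ has dimension at least $2$; I would take a general $A'\in|A|_D$, an integral du Val curve of genus $s$ meeting $J$ in the empty set. The key observation is that $|A|$ cuts out the canonical system on $A'$: by adjunction $K_{A'}=(K_S+A')|_{A'}=(A-J)|_{A'}=A|_{A'}$, because $A'\cdot J=0$ and $A'\cap J=\emptyset$ give $\mcO_S(J)|_{A'}=\mcO_{A'}$. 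Twisting $0\to\mcO_S(-A')\to\mcO_S\to\mcO_{A'}\to 0$ by $A$ and passing to the sections vanishing on $D\subset A'$ yields
\[
0\longrightarrow\mcO_S\longrightarrow\I_D(A)\longrightarrow K_{A'}(-D)\longrightarrow 0,
\]
so, using $q(S)=0$, $h^0(A',K_{A'}(-D))=h^0(S,\I_D(A))-1\ge\max\{2,\,s-1-\tfrac{d-1}{2}\}$. In other words, the hypothesis says exactly that $D$ fails to impose independent conditions on $|K_{A'}|$.

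Next I would use that $A'$ is Brill–Noether general (Theorem \ref{duVal-BN}, applicable since $S$ is unnodal of index $s+1$) to rule out $d\ge 2$. Set $L=K_{A'}(-D)$, of degree $\delta=2s-2-d$ and with $h^0(L)=r+1$. In the range $2\le d\le 2s-5$ one finds $r+1\ge s-\tfrac{d+1}{2}$ and $s-\delta+r=r+2+d-s$, so with $u=\tfrac{d+1}{2}\in[\tfrac32,\,s-2]$ the Brill–Noether number obeys
\[
\rho(s,r,\delta)=s-(r+1)(s-\delta+r)\le s-(s-u)\,u=u^2-s(u-1)<0,
\]
the last inequality because $s>u^2/(u-1)$ holds throughout $[\tfrac32,s-2]$ once $s\ge 6$. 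This contradicts $W^r_\delta(A')\neq\emptyset$. In the remaining range $d\ge 2s-4$ the hypothesis gives only $h^0(L)\ge 2$, but now $\delta=2s-2-d\le 2$, so $L$ would be a $g^1_2$; this is impossible since a Brill–Noether general curve of genus $s\ge 6$ is not hyperelliptic. Hence no $d\ge 2$ can occur, and $d=1$.

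The Brill–Noether estimate is routine; the real difficulty is producing the curve $A'$. I must ensure that some member of $|A|$ through the fixed, and possibly non-reduced, subscheme $D$ is integral, avoids $J$, and is Brill–Noether general, so that Theorem \ref{duVal-BN} applies. Integrality and the exclusion of members acquiring a $J'$-component (the reducible members classified in Proposition \ref{A}) should follow from unnodality together with Nagata's Theorem \ref{nagata}, which pins down effective classes with vanishing intersection against $J'$; the same ingredients should show that the $(\ge 2)$-dimensional system $|A|_D$ is not swallowed by the Brill–Noether–special locus of $|A|$. The genuinely delicate point is the non-reduced case, where the general member of $|A|_D$ can be singular along $D$: there I would replace $K_{A'}$ by the dualizing sheaf of the Gorenstein (local complete intersection) curve $A'$ and transfer the Brill–Noether inequality to its normalization, which is the device used in the analogous Lemma 4.3 of \cite{ABS1}.
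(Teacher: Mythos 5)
Your reduction of the hypothesis to ``$D$ fails to impose independent conditions on $|K_{A'}|$'' and the Brill--Noether count that follows are both fine, but the argument has a fatal circularity at its pivot: you need a member of $|A|_D$ that is Brill--Noether general, and no such member can be produced. Theorem \ref{duVal-BN} only concerns the \emph{general} member of the full system $|A|$; containing $D$ is a nontrivial closed condition, and the Brill--Noether--special locus of $|A|$ can perfectly well contain the whole subsystem $|A|_D$. In fact your own computation shows that it \emph{must} if $d\ge 2$: every integral curve in $|A|$ through $D$ carries the divisor $D$ with $\rho<0$, hence is BN-special. So the assertion you defer to ``unnodality together with Nagata's Theorem'' --- that $|A|_D$ is not swallowed by the BN-special locus --- is not a technical verification but is logically equivalent to the lemma itself, and neither unnodality nor Theorem \ref{nagata} (which controls effective classes with $D\cdot J'=0$, not the Brill--Noether stratification of $|A|$) gives any purchase on it. The same objection applies to your fallback for non-reduced $D$: passing to the normalization of a singular $A'$ still presupposes BN-type control of that particular curve, which is unavailable.

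The paper's proof is structured precisely to avoid choosing a good curve: it takes an \emph{arbitrary} hyperplane section $A$ through $D$ and uses properties valid for every such section. In the integral case, the hypothesis forces $\cliff(A)\le 1$, and since $A|_A$ is very ample (a du Val system) Theorem A of Franciosi--Tenni --- a Clifford-index bound for \emph{singular} integral curves --- pins $\cliff(A)=1$; the Green--Lazarsfeld nonvanishing theorem (applied with $L=K_A$, $M_1=D$, $M_2=D^*$) then yields $K_{1,2}(A)\neq 0$, i.e.\ the canonical model of $A$ is not cut out by quadrics, contradicting Proposition \ref{A}, which says $\ov S\subset\PP^s$ is cut out by quadrics. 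Note how the BN-generality of the general member of $|A|$ is used exactly once, to establish that quadric generation of $\ov S$ --- a property of the \emph{surface} that then constrains \emph{all} of its hyperplane sections, including the special ones through $D$. This is the transfer mechanism your proposal lacks. Finally, you cannot dismiss reducible sections either: they genuinely occur (nothing prevents every member through $D$ from being reducible, e.g.\ when $D$ is concentrated at $q=E_{10}\cap C$), and the paper treats this case separately via the classification $A=kJ'+(k-1)E_{10}+A_{s-k}$ of Proposition \ref{A}, writing $D=D'+lq$ and deriving $\cliff(D')\le -k-2$ on the integral curve $A_{s-k}$, again impossible by Franciosi--Tenni. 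As it stands, your proof establishes only: \emph{if} a BN-general integral member of $|A|$ passes through $D$, then $d=1$ --- which begs the question.
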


\begin{proof}
 We   view $S$ as embedded in $\PP^s=\PP H^0(S,\mathcal{O}(A))$. Consider  a   hyperplane $H$ passing through $D$, i.e defining a non-zero element of $H^0(S, \mathcal{I}_D(A))$. 
We set $A=H\cap S$. If $A$ is integral we proceed exactly as in Lemma 4.3 of \cite{ABS1} and we obtain that $\cliff (A) \leq 1$. Since $A$ is a du Val linear system, $A|_A$ is very ample, and from
Theorem A in \cite{Franciosi-Tenni} it follows that $\cliff (A) = 1$. Let $D^*$ be the adjoint divisor to $D$, 
with respect to a general section $s\in H^0(A, K_A)$, in the sense of Definition 2.8 in \cite{Franciosi-Tenni}. If we set $L=K_A$, $M_1=D$, $M_2=D^*$ in the proof of 
the nonvanishing theorem of Green-Lazarsfeld (in the appendix of   \cite{Green-Koszul}) we get that
the Koszul cohomology group $K_{s-3,1}(A) \neq 0$. From duality we obtain$K_{1,2}(A) \neq 0$, i.e. the canonical model of $A$ is not cut out by quadrics. But, by Proposition \ref{A},  the surface  $\phi_A(S)$ is cut out
by quadrics  and $A$ is a hyperplane section of $\phi_A(S)$. This is impossible. This shows  that $d=1$.  

Suppose that $A$ is not integral. Then, from Lemma (\ref{A}), $A=kJ'+(k-1)E_{10}+A_{s-k}$ where $k \geq 1$
and $A_{s-k}=(s-k)J'+E_9$ is a du Val integral curve of genus $s-k$. Such linear system has a base point at $r=p_{10}(s-k) \in J$.
Notice that $r$ does not lie on $D$ as $J$ and $C$ are disjoint and $D \subset C$.

Let $q=q_{10}=E_{10}\cap C$. and write 
$$
D=D'+lq\,,\qquad \deg D=d\,,\quad k\geq 1\,,\quad l \leq k-1
$$
We have
\be\label{E:ineq1-2}
h^0(S, \I_D(A))=
h^0( S, \I_{D'}(A_{s-k}))
\geq \max\left(3, s-\frac{d-1}{2}\right)
\ee

In particular we observe that  $s-k \geq 3$.
 We may  view $D'$ as a subscheme of the integral curve $A_{s-k}$ on $S$ . As such it defines a rank-one torsion free sheaf on $A_{s-k}$ which we still  denote by $D'$.
From   (\ref{E:ineq1-2})  we get 
\be\label{contr_om(-D)}
h^0(A_{s-k}, \omega_{A_{s-k}}(-D'))\geq 2
\ee
Thus, by the Riemann-Roch theorem on $A_{s-k}$:
 \be\label{contr_D}
 h^0(A_{s-k}, \mathcal{O}_{A_{s-k}}(D'))=h^0({A_{s-k}}, \omega_{A_{s-k}}(-D'))+(d-l)-s+k+1\geq \frac{d+1}{2}-l+k+1
 \ee

 Therefore either  $h^0(A_{s-k}, \mathcal{O}_{A_{s-k}}(D'))=1$ and $d+1\leq 2$, implying that  $d\leq 1$, which is precisely what we aim at, or $ h^0(A_{s-k}, \mathcal{O}_{A_{s-k}}(D'))\geq 2$, which, together
 with (\ref{contr_om(-D)}) tells us that $D'$ contributes to the Clifford index of $A_{s-k}$.
 Let us see that this case can not occur.
By (\ref{contr_D}) we get 
 \be\label{cliff_D}
 \aligned
 \cliff{D'}&=d-l-2h^0(A_{s-k}, \mathcal{O}_{A_{s-k}}(D'))+2\\
 &\leq d-l-2\left( \frac{d+1}{2}-l+k+1\right)+2\leq -1+l-2k \leq -k-2\\
 \endaligned
 \ee
 and this is impossible because $A_{s-k}$ verifies the hypotheses of Theorem A in \cite{Franciosi-Tenni}.\end{proof}

As a Corollary, exactly as in  \cite{ABS1}, we get:

\begin{cor}\label{stab} For all $x \in S$ the locally free sheaf $E_x$ is stable on $C$. 
\end{cor}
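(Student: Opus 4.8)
The plan is to argue by contradiction, assuming $E_x$ is not stable and using the preceding Lemma to bound the length of a subscheme of $C$. Note first that $\det E_x=(A+B)|_C=K_C$ (using $J\cap C=\emptyset$), so $E_x$ has rank two and slope $\mu(E_x)=2s$; by Proposition \ref{sections} it has exactly $s+2$ independent sections, and restricting the extension (\ref{etilde}) to $C$ gives
\be\label{plan-restr}
0\longrightarrow \mcO_C(\xi)\longrightarrow E_x\longrightarrow \mcO_C(\eta)\longrightarrow 0,
\ee
with $\deg\xi=s+1$ and $\deg\eta=3s-1$ by Proposition \ref{qhull}.

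Suppose $E_x$ is not stable and let $N\subset E_x$ be a saturated line subbundle with $\deg N\geq\mu(E_x)=2s$, so that $M=E_x/N$ is a line bundle with $\deg N+\deg M=4s$. Since $\deg\mcO_C(\xi)=s+1<2s$, the composite $N\to E_x\to\mcO_C(\eta)$ cannot vanish, hence is an inclusion and $N=\mcO_C(\eta-D)$ for an effective $D\subset C$ of length $d=3s-1-\deg N$, with $0\leq d\leq s-1$. The key preliminary step is to transport sections from $C$ to $S$: the restriction map $H^0(S,\mcO_S(A))\to H^0(C,\mcO_C(\eta))$ is an isomorphism, being injective and surjective because $h^0(\mcO_S(-B))=h^1(\mcO_S(-B))=h^1(\mcO_S(B-J))=0$, the last vanishing since $S$ has index $s+1$. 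As $D\subset C$, this restricts to an isomorphism $H^0(S,\I_D(A))\cong H^0(C,\mcO_C(\eta-D))=H^0(C,N)$, so $h^0(S,\I_D(A))=h^0(C,N)$.

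Next I would count sections. Riemann--Roch and Serre duality on $C$, together with $K_C=\xi+\eta$, give $h^1(C,N)=h^0(C,K_C-\eta+D)=h^0(C,\mcO_C(\xi+D))=h^0(C,M)$, whence $h^0(C,N)=h^0(C,M)+s-1-d$. Feeding this into $s+2=h^0(C,E_x)\leq h^0(C,N)+h^0(C,M)$ yields $h^0(C,M)\geq (d+3)/2$, and therefore
\be\label{plan-ineq}
h^0(S,\I_D(A))=h^0(C,N)\geq s-\frac{d-1}{2}.
\ee
For $s\geq 6$ and $0\le d\le s-1$ the right-hand side is at least $(s+2)/2>3$, so the hypothesis (\ref{E:ineq1}) of the preceding Lemma holds whenever $d\geq 1$. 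If $d=0$ the sequence (\ref{plan-restr}) splits, forcing $h^0(C,E_x)=h^0(\mcO_C(\xi))+h^0(\mcO_C(\eta))=s+3$, contrary to Proposition \ref{sections}; hence $d\geq 1$, and the Lemma forces $d=1$.

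The remaining, and in my view the only delicate, point is to exclude the extremal case $d=1$, i.e. a saturated destabilizing $N=\mcO_C(\eta-p)$ of degree $3s-2$; this is precisely where one argues as in \cite{ABS1}. Here the bound (\ref{plan-ineq}) is tight ($h^0(\I_p(A))=s$ for every $p$), so no contradiction comes from counting alone and one must show directly that no such subbundle exists. Tensoring (\ref{plan-restr}) by $N^{-1}=\mcO_C(p-\eta)$ gives $0\to\mcO_C(\xi+p-\eta)\to E_x\otimes N^{-1}\to\mcO_C(p)\to 0$ with $h^0(\mcO_C(\xi+p-\eta))=0$, so $\Hom(N,E_x)=h^0(E_x\otimes N^{-1})\leq 1$, with equality exactly when the connecting map $H^0(\mcO_C(p))\to H^1(\mcO_C(\xi+p-\eta))$ — cup product with the nonzero extension class of $E_x$ — vanishes. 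Showing that this never happens, for any $p\in C$, is the crux of the proof; once it is excluded, $E_x$ has no destabilizing subbundle and is stable for every $x\in S$.
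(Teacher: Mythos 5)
Your reduction follows the right strategy, but the proof is not finished: the case $d=1$ is left open, and that is precisely where the remaining content lies. A saturated subsheaf $N=\mcO_C(\eta-p)$ has degree $3s-2\geq 2s$, so it genuinely destabilizes $E_x$; Lemma \ref{stab} only rules out $2\leq d\leq s-1$, and, as you yourself observe, the section count gives no contradiction at $d=1$ since $h^0(S,\I_p(A))=s$ for every $p$. Your last paragraph correctly reformulates the problem as the nonvanishing, for every $p\in C$, of the connecting map $H^0(\mcO_C(p))\to H^1(\mcO_C(\xi+p-\eta))$ (cup product with the extension class), but then stops without proving it. The paper itself does not reprove this step: its entire proof of the Corollary is the observation that Lemmas 5.2, 5.3 and 5.4 of \cite{ABS1} hold verbatim in the Halphen setting, so that Proposition 5.5 of \cite{ABS1} applies, with Lemma 4.3 of \cite{ABS1} replaced by Lemma \ref{stab} above. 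The exclusion of the residual cases $d=0,1$ is exactly what those cited Lemmas 5.2--5.4 supply. So your proposal reconstructs the counting half of the argument (the half that in this paper required new work, namely Lemma \ref{stab}) but omits the other half, and as it stands it does not prove stability.

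A smaller inaccuracy: the restricted sequence you write, $0\to\xi\to E_x\to\eta\to 0$, is valid only for $x\notin C$ (and for $x\in J$, where $E_x=\E|_C$). For $x\in C$ the restriction of (\ref{etilde}) to $C$ is $0\to\xi(x)\to E_x\to\eta(-x)\to 0$, as used in the proof of Proposition \ref{inj}. Your identification $N\cong\mcO_C(\eta-D)$ and the numerical estimates survive this change, because the composite $N\to\eta(-x)$ is still nonzero for degree reasons ($\deg N\geq 2s>s+2=\deg\xi(x)$); but your exclusion of $d=0$ via splitting does not apply literally when $x\in C$. There one argues instead that a nonzero map $\mcO_C(\eta)\to\eta(-x)$ is impossible by degree, forcing $\mcO_C(\eta)\subset\xi(x)$, which is absurd. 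This slip is easily repaired, unlike the missing $d=1$ case.
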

\begin{proof}
Since Lemmas 5.2, 5.3 and 5.4
of \cite{ABS1} hold verbatim, we can apply Proposition 5.5  of \cite{ABS1}, and obtain the result.
\end{proof}

There is another important consequence of Lemma (\ref{stab}) to be used in the last section.
It is based on Mukai's Lemma 1 in \cite{Mukai2} whose statement we include for the convenience of the reader.

\begin{lem}\label{Mukai_lem1}{\rm (Mukai)}. Let $E$ be a rank two vector bundle on $C$ with canonical determinant, and let $\zeta$ be a line bundle on $C$. If $\zeta$ is generated by global sections, then we have
$$
\dim \Hom_{\mathcal{O}_C}(\zeta, E)\geq h^0(C, E)-\deg\zeta
$$
\end{lem}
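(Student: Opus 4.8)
The plan is to reduce the statement to a short cohomological computation that exploits the self-duality forced by the canonical determinant hypothesis $\det E\cong\omega_C$. First I would rewrite the left-hand side as $\dim\Hom_{\mcO_C}(\zeta,E)=h^0(C,E\otimes\zeta^{-1})$, so that the claim becomes
$$
h^0(C,E\otimes\zeta^{-1})\geq h^0(C,E)-\deg\zeta .
$$
The case $h^0(C,\zeta)=1$ is immediate: a globally generated line bundle with a single (nonvanishing) section is trivial, so $\zeta\cong\mcO_C$, $\deg\zeta=0$, and both sides equal $h^0(C,E)$. Hence I may assume $h^0(C,\zeta)\geq 2$, which in particular forces $\deg\zeta\geq 1$.

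Since $\zeta$ is globally generated with $h^0(C,\zeta)\geq 2$, a general pencil in $|\zeta|$ is base point free, so I can choose $s_0,s_1\in H^0(C,\zeta)$ with no common zero. Their evaluation gives the base-point-free pencil sequence
$$
0\longrightarrow \zeta^{-1}\longrightarrow \mcO_C^{\,2}\longrightarrow \zeta\longrightarrow 0 ,
$$
the kernel being $\zeta^{-1}$ since $\det\mcO_C^{\,2}=\mcO_C$. Tensoring with $E$ and taking cohomology of
$$
0\longrightarrow E\otimes\zeta^{-1}\longrightarrow E^{\oplus 2}\longrightarrow E\otimes\zeta\longrightarrow 0
$$
identifies $H^0(C,E\otimes\zeta^{-1})$ with $\ker\!\big(H^0(C,E)^{\oplus2}\to H^0(C,E\otimes\zeta)\big)$, and therefore yields the estimate
$$
\dim\Hom_{\mcO_C}(\zeta,E)=h^0(C,E\otimes\zeta^{-1})\geq 2\,h^0(C,E)-h^0(C,E\otimes\zeta).
$$

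The key step is to control the term $h^0(C,E\otimes\zeta)$ via the canonical determinant. For $E$ of rank two one has $E^\vee\cong E\otimes(\det E)^{-1}\cong E\otimes\omega_C^{-1}$, so Serre duality gives $h^0(C,E\otimes\zeta)=h^1(C,E^\vee\otimes\zeta^{-1}\otimes\omega_C)=h^1(C,E\otimes\zeta^{-1})$. Since $\deg E=\deg\omega_C=2g-2$, Riemann-Roch gives $\chi(C,E\otimes\zeta^{-1})=\deg E-2\deg\zeta+2(1-g)=-2\deg\zeta$, whence $h^1(C,E\otimes\zeta^{-1})=h^0(C,E\otimes\zeta^{-1})+2\deg\zeta$. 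Combining the last two identities yields the exact relation
$$
h^0(C,E\otimes\zeta)=\dim\Hom_{\mcO_C}(\zeta,E)+2\deg\zeta .
$$
Substituting this into the estimate above gives $2\dim\Hom_{\mcO_C}(\zeta,E)\geq 2\,h^0(C,E)-2\deg\zeta$, which is the assertion.

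I expect the main subtlety to be conceptual rather than computational: the Koszul estimate \emph{by itself} is too weak, since the crude bound $h^0(C,E\otimes\zeta)\leq h^0(C,E)+2\deg\zeta$ would only yield $\dim\Hom(\zeta,E)\geq h^0(C,E)-2\deg\zeta$, losing a factor in $\deg\zeta$. The decisive point, and the only place where $\det E\cong\omega_C$ enters, is the exact identity $h^0(C,E\otimes\zeta)=\dim\Hom(\zeta,E)+2\deg\zeta$, which feeds the otherwise wrong-direction term $h^0(C,E\otimes\zeta)$ back into the inequality and closes the loop. The remaining points — the existence of a base-point-free pencil $\langle s_0,s_1\rangle$ and the degenerate case $\zeta\cong\mcO_C$ — require only routine care.
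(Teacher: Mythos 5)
Your proof is correct. The paper itself gives no argument for this lemma --- it is quoted verbatim from Mukai (\cite{Mukai2}, Lemma 1) ``for the convenience of the reader'' --- and your derivation (the base-point-free pencil trick $0\to\zeta^{-1}\to\mcO_C^2\to\zeta\to 0$ tensored with $E$, together with the self-duality $E^\vee\cong E\otimes\omega_C^{-1}$ forced by $\det E\cong\omega_C$, which via Serre duality and Riemann--Roch converts the wrong-direction term into the identity $h^0(C,E\otimes\zeta)=\dim\Hom_{\mcO_C}(\zeta,E)+2\deg\zeta$) is precisely Mukai's original proof, with the degenerate case $\zeta\cong\mcO_C$ and the existence of a base-point-free pencil handled correctly.
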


\begin{cor}\label{stab2} Let
$$
0\longrightarrow L \longrightarrow E\longrightarrow K_CL^{-1}\longrightarrow 0
$$
be an extension on $C$ where $|L|$ is a is a base-point-free $g^1_{s+2}$. Then $E$ is stable.

\end{cor}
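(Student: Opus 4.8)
The plan is to establish slope-stability directly, by ruling out destabilising line subbundles. Since $\det E=K_C$ and $\deg K_C=2g-2=4s$, the bundle $E$ is stable precisely when every line subbundle $N\subset E$ satisfies $\deg N<2s$. I would therefore argue by contradiction: suppose $N\subset E$ is a saturated line subbundle of maximal degree with $\deg N\ge 2s$, and put $Q=E/N=K_CN^{-1}$, so that $\deg Q=4s-\deg N\le 2s$ and $h^0(C,E)\le h^0(C,N)+h^0(C,Q)$.

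The heart of the matter is to transfer this destabilising datum onto the linear series $\eta=A|_C$, where Lemma \ref{stab} can be brought to bear. Recall from Proposition \ref{qhull} that $K_C=\eta\otimes\xi$, with $\xi=B|_C$ a base-point-free $g^1_{s+1}$ and $\eta$ a $g^s_{3s-1}$. Since $\xi$ is globally generated, Mukai's Lemma \ref{Mukai_lem1} gives $\dim\Hom_{\mcO_C}(\xi,E)\ge h^0(C,E)-(s+1)$; for the bundles relevant in the last section one has $h^0(C,E)=s+2$, so this is at least $1$ and there is a non-zero map $\xi\to E$. Composing it with the projection $E\to Q$ and separating the cases according to whether this composite vanishes, I expect one of the two line bundles $N,\,Q$ to be forced into the shape $\eta(-D)$ for an effective divisor $D\subset C$. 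Finally, the isomorphism $H^0(S,\mcO_S(A))\cong H^0(C,\eta)$ of Proposition \ref{qhull} identifies $h^0(C,\eta(-D))$ with $h^0(S,\I_D(A))$, bringing the problem into the exact form treated by Lemma \ref{stab}.

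The last step is the numerical squeeze. In the main case $Q=\xi(D)$, $N=\eta(-D)$, Riemann-Roch on $C$ gives $h^0(C,\xi(D))=h^0(C,\eta(-D))+d-s+1$ with $d=\deg D$, and feeding this into $s+2=h^0(C,E)\le h^0(C,N)+h^0(C,Q)$ yields exactly $h^0(S,\I_D(A))=h^0(C,\eta(-D))\ge s-\tfrac{d-1}{2}$; since $\deg Q\le 2s$ forces $d\le s-1$, this quantity also exceeds $3$. Thus the hypotheses of Lemma \ref{stab} hold, forcing $d=1$. The surviving extremal configuration $N=\eta(-p)$, $Q=\xi(p)$ for a single point $p$ I would then exclude by hand, using the base-point-freeness of $\xi$ together with the fact, proved in Proposition \ref{A}, that $\ov S$ is cut out by quadrics. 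This contradiction gives $\deg N<2s$, hence the stability of $E$.

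The hard part will be the passage through Lemma \ref{stab}. When $N$ and $Q$ both contribute to the Clifford index, the plain estimate only bounds $h^0(C,N)+h^0(C,Q)$ by $2s-\cliff(C)+2\ge s+3$ (as $\cliff(C)\le s-1$, computed by $\xi$), which never contradicts $h^0(C,E)=s+2$; it is off by one unit. The whole point is that the section bound on $\eta(-D)$ actually reaches the refined threshold $s-\tfrac{d-1}{2}$, and this is precisely the quadric-generation (equivalently, Koszul-cohomology) input encoded in Lemma \ref{stab}. I expect the extremal length-one case $d=1$, which survives the plain Clifford count, to be the most delicate point, and the careful identification of $D$ together with the bookkeeping across the two cases of the composite $\xi\to E\to Q$ to be where the real work lies.
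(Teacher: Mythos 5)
Your skeleton is indeed the paper's: the proof of Corollary \ref{stab2} given there is a reference to Remark 5.11 of \cite{ABS1}, which is precisely Mukai's Lemma \ref{Mukai_lem1} applied to a base-point-free pencil, the Riemann--Roch squeeze to the threshold $s-\tfrac{d-1}{2}$, and Lemma \ref{stab} in place of Lemma 4.3 of \cite{ABS1}; your main-case numerics are correct, as is your diagnosis that a bare Clifford count misses by exactly one. But there is a genuine gap: you never play the destabilising subbundle $N$ off against the \emph{defining} extension $0\to L\to E\to K_CL^{-1}\to 0$, and without that the argument cannot close. In the branch you defer (composite $\xi\to E\to Q$ zero), one has $N=\xi(D)$ with $d\ge s-1$; composing $N\to E\to K_CL^{-1}$ and noting $\deg L=s+2<2s$ shows this composite is non-zero, whence $\deg N\le 3s-2$, i.e. $d\le 2s-3$. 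For $d\le 2s-4$ your squeeze gives $h^0(S,\I_D(A))=h^0(C,\eta(-D))\ge s-\tfrac{d-1}{2}\ge \tfrac52$, so Lemma \ref{stab} applies and $d=1$ contradicts $d\ge s-1$; but at the top value $d=2s-3$ the threshold equals $2$ and the hypothesis $h^0\ge 3$ of Lemma \ref{stab} fails. There the only escape is that $N\cong K_CL^{-1}$, the composite $N\to K_CL^{-1}$ is an isomorphism, and $E$ \emph{splits} --- which must be excluded by hypothesis. (Some such hypothesis is indispensable anyway: as literally stated the corollary would include the split extension, which is unstable; the intended objects are non-split extensions with $h^0(C,E)=s+2$, as you correctly assume.)

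Your endgame also has two holes. First, Lemma \ref{stab} presupposes $d\ge 1$, so after applying it the survivor is not only $N=\eta(-p)$ but also $d=0$, i.e. $N=\eta$, $Q=\xi$; the section count $s+2\le h^0(\eta)+h^0(\xi)=s+3$ does not exclude it, and it dies only via the extension: any map $\eta\to K_CL^{-1}$ vanishes for degree reasons ($3s-1>3s-2$), forcing $\eta\subset L$, absurd since $3s-1>s+2$. Second, for $d=1$ the tools you name (base-point-freeness of $\xi$ plus quadric generation) are not the ones that work --- the quadric input has already been spent inside Lemma \ref{stab}. Instead: $\eta(-p)\to K_CL^{-1}$ is a map of line bundles of the same degree $3s-2$, hence either zero (then $\eta(-p)\subset L$, absurd by degree) or an isomorphism, giving $L\cong\xi(p)$. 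If $h^0(\xi(p))=2$ then, since $h^0(\xi)=2$, every section of $L$ vanishes at $p$, contradicting base-point-freeness of $L$; if $h^0(\xi(p))\ge 3$ then $h^0(\eta(-p))=h^1(\xi(p))\ge s+1=h^0(\eta)$, making $p$ a base point of $\eta$ and contradicting Proposition \ref{qhull}. With these insertions (and the routine check that $h^1(S,\mcO_S(A-C))=0$, so that sections of $\eta(-D)$ lift to $\I_D(A)$) your outline does become the paper's argument; as written, the complete absence of the $L$-extension from your case analysis is a real gap, not mere bookkeeping.
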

\begin{proof} This is proved exactly as in Remark 5.11 of \cite{ABS1} (with $E$ instead of $E_L$)  by using Mukai's Lemma \ref {Mukai_lem1}, and
Lemma  5.3 of \cite{ABS1},  which holds in our situation as well, while Lemma 4.3 of \cite{ABS1}
can be substituted by Lemma (\ref{stab}) above.
\end{proof}

The relevance of this Corollary is the following result asserting the existence on $C$ of  
base-point-free $g^1_{s+2}$'s.

\begin{lem}\label{g-1-s+1} Let $C$ be a smooth hyperplane section of an $(s+1)$-special Halphen surface.
Then there exists on $C$ a base-point-free $g^1_{s+2}$.
\end{lem}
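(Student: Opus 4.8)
The plan is to produce the pencil directly on the Jacobian of $C$, by showing that the Brill--Noether locus $W^1_{s+2}(C)$ cannot consist solely of pencils carrying a base point. First I would record that the desired pencil cannot be cut on $C$ by a line bundle coming from $S$: any effective class $D$ on $S$ with $D\cdot C\le 2s$ satisfies $D\cdot J'=0$, whence $D=mJ'$ by Nagata's Theorem \ref{nagata}, and the restrictions $mJ'|_C$ only reproduce $\xi$ together with its base-point-laden multiples. Thus the sought-for $g^1_{s+2}$ is genuinely a feature of the abstract curve, and the argument must take place on $\Pic^{s+2}(C)$.

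The engine is the Petri map $\mu_{0,\xi}\colon H^0(C,\xi)\otimes H^0(C,\eta)\to H^0(C,K_C)$ of the elliptic pencil $\xi=g^1_{s+1}$, where $\eta=K_C\otimes\xi^{-1}=A|_C$. Since $\xi$ is base-point free, the base-point-free pencil trick identifies $\ker\mu_{0,\xi}$ with $H^0\big(C,\eta\otimes\xi^{-1}\big)=H^0\big(C,(A-B)|_C\big)$. The dimension count $h^0(C,\xi)\,h^0(C,\eta)=2(s+1)>2s+1=h^0(C,K_C)$ forces this kernel to be nonzero, while the surface computations of Propositions \ref{calcoli} and \ref{A} give $h^0(C,(A-B)|_C)\le 1$; hence $h^0(C,\eta\otimes\xi^{-1})=1$. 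Let $u$ span it and set $Z=\operatorname{div}(u)$, an effective divisor of degree $2s-2>0$.

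Now fix $p_0\in Z$ and put $L=\xi\otimes\mcO_C(p_0)\in W^1_{s+2}(C)$; one checks $h^0(C,L)=2$, so $L\notin W^2_{s+2}(C)$ and $T_LW^1_{s+2}(C)=(\operatorname{Im}\mu_{0,L})^{\perp}\subset H^0(C,K_C)^\vee$ has dimension $g-\operatorname{rk}\mu_{0,L}$. Writing $H^0(C,\xi)=\langle a,b\rangle=H^0(C,L)$, the distinguished element $a\otimes(bu)-b\otimes(au)$ of $\ker\mu_{0,\xi}$ has both factors $au,bu$ vanishing at $p_0$ (because $u(p_0)=0$), so it already lies in $H^0(C,L)\otimes H^0\big(C,K_C\otimes L^{-1}\big)$ and is killed by $\mu_{0,L}$. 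Hence $\operatorname{rk}\mu_{0,L}\le 2s-1$ and $\dim T_LW^1_{s+2}(C)\ge 2$, whereas at a general $\xi\otimes\mcO_C(p)$ with $p\notin Z$ the same element leaves the subspace and $\mu_{0,\xi\otimes\mcO_C(p)}$ is injective, giving tangent dimension $1$.

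To convert this tangent jump into an honest base-point-free pencil I would argue by contradiction. If every $g^1_{s+2}$ had a base point, then---using that $W^1_{s+1}(C)=\{\xi\}$ is a single reduced point, i.e. that the Halphen pencil $B$ induces the unique $g^1_{s+1}$, which I would establish from Nagata together with the Clifford-index estimate of Lemma \ref{stab}, exactly as in \cite{ABS1}---the locus $W^1_{s+2}(C)$ would coincide set-theoretically with the smooth Abel--Jacobi curve $\Gamma=\{\xi\otimes\mcO_C(p):p\in C\}\cong C$. In particular $W^1_{s+2}(C)$ would then have the expected dimension $\rho(g,1,s+2)=1$, hence be Cohen--Macaulay; being smooth, and so reduced, at a general point of $\Gamma$, it would be reduced everywhere, forcing $\dim T_LW^1_{s+2}(C)=1$ at the smooth point $L=\xi\otimes\mcO_C(p_0)$ for $p_0\in Z$ chosen off the finitely many branch crossings---contradicting the preceding paragraph. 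Therefore $W^1_{s+2}(C)\neq\Gamma$, and any $L\in W^1_{s+2}(C)\setminus\Gamma$ is the required base-point-free $g^1_{s+2}$. I expect the delicate point to be exactly this last step: the tangent jump only shows \emph{a priori} that $W^1_{s+2}(C)$ is singular or non-reduced along $\{\xi\otimes\mcO_C(p_0):p_0\in Z\}$, and it is the Cohen--Macaulayness---valid precisely because $W^1_{s+2}(C)$ then has its expected dimension---that excludes the non-reduced alternative; this in turn leans on the finiteness $W^1_{s+1}(C)=\{\xi\}$, whose verification via Nagata and Lemma \ref{stab} is the main thing to pin down.
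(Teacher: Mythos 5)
Your overall strategy genuinely tracks the argument the paper relies on: the paper proves Lemma \ref{g-1-s+1} by importing the proof of Proposition 4.5(iii) of \cite{ABS1} word for word, the \emph{only} new verification being $h^0(C,\eta\xi^{-1})=1$ --- exactly the computation you place at the centre, and you in fact do it more completely than the paper does, since the sequence $0\to\mcO_S(-2B)\to\mcO_S(A-B)\to\eta\xi^{-1}\to 0$ together with Proposition \ref{calcoli} iii), iv) and Serre duality only yields $h^0(C,\eta\xi^{-1})\le 1$, and it is your base-point-free-pencil-trick dimension count that supplies the lower bound. Your local analysis is also sound where you wave at it: $h^0(C,\xi(p_0))=2$ follows from base-point-freeness of $\eta$ (Proposition \ref{qhull}), the identification of $\ker\mu_{0,\xi}$ with the span of $a\otimes(bu)-b\otimes(au)$ is correct, and the Cohen--Macaulay/generic-reducedness mechanism for converting the tangent jump along $Z$ into a contradiction is legitimate (determinantal loci of expected dimension are CM, and CM plus reducedness at the smooth points $\xi(p)$, $p\notin Z$, forces reducedness everywhere).

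The genuine gap is the pillar you yourself flag and postpone: $W^1_{s+1}(C)=\{\xi\}$. The tools you propose do not deliver it by the route you indicate. Theorem \ref{nagata} constrains effective divisor classes \emph{on $S$}, whereas a competing $g^1_{s+1}$ is an abstract line bundle on $C$ with no a priori provenance from $\Pic(S)$; incidentally, your warm-up claim that an effective $D$ with $D\cdot C\le 2s$ must satisfy $D\cdot J'=0$ is false as stated, since $D=F=E_9-E_{10}$ has $F\cdot C=2s-1$ and $F\cdot J'=1$. Quantitatively, the natural attempt fails at exactly the relevant degree: if $M\neq\xi$ were a base-point-free $g^1_{s+1}$, Mukai's Lemma \ref{Mukai_lem1} applied to $E_x$ gives a nonzero map $M\to E_x$, hence (composing with $E_x\to\eta$ and using that $M\hra\xi$ would force $M=\xi$) a presentation $M=\eta(-D)$ with $\deg D=2s-2$ and $h^0(S,\I_D(A))\ge 2$; but the hypothesis of Lemma \ref{stab} at $d=2s-2$ is $h^0(S,\I_D(A))\ge\max\{3,\,s-\tfrac{d-1}{2}\}=3$, which is precisely not met. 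Geometrically, a second $g^1_{s+1}$ amounts to a pencil inside $|A|$ whose base locus, of degree $A^2=2s-2$, lies on $C$, and nothing you quote excludes this; on a K3 the corresponding uniqueness is obtained through Lazarsfeld--Mukai bundles and the Picard lattice, i.e.\ through exactly the kind of Mukai-vector bookkeeping that the present paper warns does not transfer mechanically to Halphen surfaces (where, for instance, $\dim M_v(S)=5$ rather than $2$). Until this uniqueness (covering base-pointed pencils in $W^1_{s+1}$ as well) is actually established in the Halphen setting, the final contradiction in your argument is not secured, so the proposal is a correct skeleton with one load-bearing step missing.
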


\begin{proof} Here we can repeat, word by word, the proof of item iii) of Proposition 4.5 in \cite{ABS1}.
Recalling the notation introduced at the beginning of this section, the only result we need to check is that, also in our situation $h^0(C,\eta\xi^{-1})=1$. We look at the sequence
$$
0\longrightarrow \mcO_S(-2B) \longrightarrow \mcO_S(A-B)\longrightarrow \eta\xi^{-1}\longrightarrow 0
$$
and we readily conclude using  Proposition \ref {calcoli}, iii), iv) an Serre's duality.
\end{proof}
\begin{rem}\label{voisin-bund}
Let $C$ be a 
a smooth genus $g$ curve. To any pair $(v, L)$ where $v$ is a vector in the 
 cokernel of the Gauss-Wahl map (\ref{G-W}), and $L$ is a base-point-free pencil on $C$,
Voisin,  \cite{Voisin_W} 
associates  a rank-2 vector bundle $E_{L,v}$, often denoted simply by $E_L$. This vector bundle is an extension
$$
0\longrightarrow L \longrightarrow E_L\longrightarrow K_CL^{-1}\longrightarrow 0
$$
having the property that
\be\label{split-h0}
h^0(C,E_L)=h^0(C,L)+h^0(C,K_CL^{-1})
\ee
We call such a bundle a Voisin bundle.
Voisin interprets the vector $v$ as a ribbon in $\PP^g$ having the curve $C$ as hyperplane section.
Thanks to Theorem 7.1 in \cite{Wahl-square} and Theorem 3 in \cite{ABS2}, whenever $g\geq 11$, and
whenever the Clifford index of $C$ is greater or equal than 3, this ribbon can be integrated to a bona fide surface $X\subset \PP^g$ having isolated singularities and canonical hyperplane sections, among which $C$ itself.
When $X$ is a K3 surface,  $E_L=E_{L,X}$ is nothing but the restriction to $C$ of the Lazarsfeld-Mukai bundle
$\E_{L,X}$ on $X$ whose dual $\F_{L,X}$ is defined by
$$
0\longrightarrow \F_{L,X} \longrightarrow H^0(C, L)\otimes \mcO_S\longrightarrow L\longrightarrow 0
$$
When $g=2s+1$ and   $|L|$ is a base-point-free $g^1_{s+2}$ on $C$, then 
\be\label{split-h0v}
h^0(C,E_{L,X})=s+2
\ee

\end{rem}

\subsection{Brill Noether loci on the hyperplane section of an $(s+1)$-special Halphen surface.}

For any $x \in S$ we have produced a rank $2$,  locally free, stable, sheaf 
$$
E_x:={\wt{\E}{}_x}_{|C}
$$ 
on $C$
with determinant equal to $K_C$
and having $s+2$ linearly independent sections (Proposition \ref{sections}, and Corollary \ref{stab}).

Let 
$$
M_C(2,K_C,s+2)=\{v.b. \,\,E\,\,\text{on}\,\, C\,\,|\,\,\rk E=2\,,\,\,\det E=K_C\,,\,\,h^0(E)\geq s+2\}
$$
be the Brill-Noether locus of stable rank $2$ locally free sheaves on $C$ whose
determinant is $K_C$ with at least $s+2$ sections.

Via the universal family of extensions (\ref{ue}), 
we can  define  a morphism:
\be\label{sigma}
{\aligned
\sigma:T=S &\longrightarrow M_C(2,K_C,s+2)\\
& x\mapsto E_x
\endaligned}
\ee

By construction the map $\sigma$ contracts  the curve $J$ to a point, but we are now going
to show that this is the only fiber of $\sigma$ containing more than one point.

\begin{prop}\label{inj} The restriction of $\sigma$ to $S \setminus J$ is injective
\end{prop}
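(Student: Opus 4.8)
The claim is that $\sigma\colon S \setminus J \to M_C(2, K_C, s+2)$ sending $x \mapsto E_x = (\wt\E_x)|_C$ is injective. The plan is to suppose $E_x \cong E_{x'}$ for two points $x, x' \in S \setminus J$ and deduce $x = x'$. The natural strategy is to recover the point $x$ intrinsically from the isomorphism class of the vector bundle $E_x$ on $C$, by reconstructing the distinguished destabilizing-type sub-line-bundle $\xi = B_{|C}$ and its relation to $x$.

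Let me verify what I'd get more carefully.

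Let me reconsider the structure. For $x \notin J$, the sheaf $\wt\E_x$ is locally free and sits in the extension $0 \to \mcO_S(B) \to \wt\E_x \to \I_x(A) \to 0$. Restricting to $C$ (which is disjoint from $J$ but contains $x$ when $x \in C$, or not, when $x \notin C$) gives information about $E_x$. The key invariant should be the sub-line-bundle $\xi \hookrightarrow E_x$ coming from $\mcO_S(B)|_C$.
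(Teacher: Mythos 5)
Your proposal is a plan, not a proof: it correctly identifies the invariant that the paper's argument also uses (the distinguished sub-line-bundle coming from $\mcO_S(B)$), but it stops exactly where the work begins. The statement you gesture at --- that $x$ can be recovered ``intrinsically'' from $E_x$ via the sub-line-bundle $\xi$ --- is only true once you know that this sub-line-bundle is canonical, i.e.\ that
$\dim \Hom_{\mcO_C}(\xi, E_x)=1$. Without this, $E_x$ could a priori admit several non-proportional maps from $\xi$, and two distinct points $x\neq x'$ could yield isomorphic restrictions. The paper's entire proof of the proposition consists of establishing precisely this one-dimensionality, and it is not formal: one twists by $\xi^{-1}$, i.e.\ studies $0\to \wt\E_x(-B-C)\to \wt\E_x(-B)\to E_x\otimes\xi^{-1}\to 0$ on $S$, and must show $H^1(S,\wt\E_x(-B-C))=0$. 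This in turn requires Serre duality (using $\wt\E_x(-C)=\wt\E_x^\vee$), the base-point-free pencil trick for $|B|$, the vanishing and dimension statements of Lemma \ref{h12}, and a final computation showing $h^1(S,\wt\E_x(-B-J))=1$ so that the Euler characteristic forces $h^2(S,\wt\E_x(-B-C))=\chi=2s+1$. None of this appears in your proposal, and there is no visible shortcut around it.

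A second, smaller omission: even granting $\dim\Hom_{\mcO_C}(\xi,E_x)=1$, you still need to explain how $x$ is read off from $E_x$, and this splits into cases. If $x\in C$, the saturation of the image of $\xi$ is $\xi(x)$, so $x$ is the divisor of the twist. If $x\notin C$, the extension $0\to\xi\to E_x\to\eta\to 0$ does not see $x$ through a divisor; instead $x$ is recovered from the extension class, i.e.\ from the rank-one coboundary $H^0(C,\eta)\to H^1(C,\xi)$, which determines a point of the quadratic hull of $\phi_\eta(C)$ --- and identifying that hull with $\ov S$ is exactly Proposition \ref{qhull}. So the proposal as written has a genuine gap on both counts: the key cohomological lemma is missing, and the recovery mechanism in the $x\notin C$ case (where Proposition \ref{qhull} is essential) is not addressed.
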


\begin{proof}
By construction, if $x\notin C$, the sheaf $E_x$ is an extension
$$
0\longrightarrow \xi \longrightarrow E_x\longrightarrow \eta\longrightarrow 0
$$
while, if $x \in C$, the sheaf $E_x$ is an extension
$$
0\longrightarrow \xi(x)\longrightarrow E_x\longrightarrow \eta(-x)\longrightarrow 0
$$
It will suffice to show that
$$
\dim \Hom_{\mathcal{O}_C}(\xi, E_x)=1.
$$
In order to do this, we consider the exact sequence on $S$:

$$
0\longrightarrow\wt\E_x(-B-C) \longrightarrow\wt\E_x(-B)
\longrightarrow E_x\otimes \xi^{-1}\longrightarrow 0
$$
Since $h^0(S, \wt \E_x(-B-C))=0$, and $h^0(S, \wt \E_x(-B))=1$, it will be enough to show that
$$
H^1(S, \wt\E_x(-B-C))=0
$$
From
\be\label{succ-c}
0\longrightarrow\mcO_S(-C) \longrightarrow\wt\E_x(-B-C)
\longrightarrow\I_x(-2B)\longrightarrow 0
\ee
we obtain $\chi(S, \wt \E_x(-B-C))=2s+1$. It will then be enough to show that $h^2(S, \wt\E_x(-B-C))=2s+1$.
From Serre duality, and from the identification $\wt \E_x(-C)=\wt\E^\vee_x$, we have that
$$
h^2(S, \wt\E_x(-B-C))= h^2(S, \wt\E^\vee_x(-B))=h^0(S, \wt\E_x(B-J))
$$
Let us consider the base-point-free-pencil trick on $S$ for $B$. From the exact sequence

$$
0\longrightarrow \mcO_S(-B) \longrightarrow H^0(S, B)\otimes \mcO_S
\longrightarrow\mcO_S( B)
\longrightarrow0
$$
we obtain
$$
0\longrightarrow \wt\E_x( -B-J) \longrightarrow H^0(S, B)\otimes \wt\E_x(-J)
\longrightarrow \wt\E_x(B-J)
\longrightarrow0
$$
From (\ref{succ-c}) we obtain $h^0(S, \wt\E_x( -B-J)) =0$.
From Lemma (\ref{h12}) we have $ h^1(S, \wt\E_x(-J)) =0$ and, since $\chi(S, \wt E_x(-J))=s$, we also get
$h^0(S, \wt\E_x( -J)) =s$. Then it is enough to show that
\be\label{bj)}
h^1(S, \wt\E_x( -B-J)) =1
\ee 
We will do this considering the exact sequence:
\be\label{-J}
0\longrightarrow \mcO_S(-J) \longrightarrow \wt\E_x( -B-J)
\longrightarrow \I_x(A-B-J)
\longrightarrow0
\ee

We first observe that, from Serre duality, from the identification $\wt \E_x(-C)=\wt\E^\vee_x$, and from the exact sequence
$$
0\longrightarrow \mcO_S(B-A) \longrightarrow \wt\E_x(-A)
\longrightarrow \I_x \longrightarrow0
$$ 
we get
\be\label{-A}
h^2(S, \wt\E_x(-B-J))=h^0(S,\wt\E_x(-A))=0
\ee
We then compute $h^i(S, \I_x(A-B-J))$ from
$$
0\longrightarrow \I_x(A-B-J) \longrightarrow \mcO_S(A-B-J)
\longrightarrow \mcO_S(A-B-J)_{|x}
\longrightarrow0
$$
We obtain
$$
0\longrightarrow \CC \longrightarrow H^1( S, \I_x(A-B-J)
\longrightarrow H^1(S, \mcO_S( A-B-J))=H^1(S, \mcO_S( B-A))=\CC
\longrightarrow0
$$
and $ H^2(S, \I_x(A-B-J))=0$
Consider then (\ref{-J}). We have obtained
$$
0\longrightarrow H^1(S, \wt\E_x(-B-J) )\longrightarrow \CC^2
\longrightarrow \CC\longrightarrow 0
$$
which gives $h^1(S,\wt\E_x(-B-J) )=1$,

\end{proof}

\begin{prop}\label{surj}
$\sigma$ is surjective.
\end{prop}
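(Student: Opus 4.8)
The plan is to prove surjectivity by showing directly that every $E\in M_C(2,K_C,s+2)$ is isomorphic to $E_x$ for some $x\in S$. The point of departure is Mukai's Lemma \ref{Mukai_lem1}. Since $\xi=B_{|C}$ is a base-point-free $g^1_{s+1}$ (Proposition \ref{qhull}) and $h^0(C,E)\ge s+2=\deg\xi+1$, we obtain
$$\dim\Hom_{\mcO_C}(\xi,E)\ge h^0(C,E)-\deg\xi\ge1,$$
so there is a nonzero, hence injective, homomorphism $\xi\to E$. Thus $E$ contains $\xi$ and fits into $0\to L\to E\to M'\to0$, where $L\supseteq\xi$ is the saturation of the image and $M'=K_CL^{-1}$.

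The first task is to identify $L$. Stability of $E$ gives $\deg L<\frac{1}{2}\deg E=2s$, while $\deg L\ge\deg\xi=s+1$. To rule out the intermediate degrees $\ge s+3$ I would argue as in Corollary \ref{stab}, converting such a sub-line bundle into a closed subscheme $D\subset C$ violating the Clifford-type estimate of Lemma \ref{stab}; this should leave only $L=\xi$, whence $M'=\eta$ and
$$0\longrightarrow\xi\longrightarrow E\longrightarrow\eta\longrightarrow0,$$
or $L=\xi(x)$ for a point $x\in C$, whence the torsion-free quotient is $\eta(-x)$ and
$$0\longrightarrow\xi(x)\longrightarrow E\longrightarrow\eta(-x)\longrightarrow0,$$
exactly the two shapes already encountered in the proof of Proposition \ref{inj}.

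It remains to recover $x$ and to check $E\cong E_x$. In the untwisted case the extension class lies in $\Ext^1_{\mcO_C}(\eta,\xi)\cong H^1(C,\xi^2K_C^{-1})$, and the long exact cohomology sequence shows that $h^0(C,E)=s+2$ is equivalent to the coboundary
$$\partial_E:H^0(C,\eta)\longrightarrow H^1(C,\xi)\cong H^0(C,\eta)^\vee$$
having rank exactly one. Since the Yoneda pairing is symmetric here, $\partial_E$ is a symmetric bilinear form on $H^0(C,\eta)=H^0(S,\mcO_S(A))$, and being of rank one it is the square of a single linear functional, hence determines one point in the relevant projective space. The crux is to identify the points so obtained with $\ov S$: here I would invoke Proposition \ref{qhull}, by which $\ov S$ is the quadratic hull of $\phi_\eta(C)$. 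Matching $\partial_E$ against the coboundary obtained by restricting to $C$ the surface extension $e_x$ of Definition \ref{wtdef} should identify the rank-one degeneracy locus with $\ov S$, producing a unique $x\in S\setminus J$ (and, in the limiting non-locally-free situation, a point of $J$) with $E\cong E_x$; the twisted case $L=\xi(x)$ corresponds to $x\in C$ by the same reconstruction.

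The main obstacle is this last identification. Pinning down the saturation $L$ is routine once Lemma \ref{stab} is in hand, but showing that the rank-one degeneracy locus of the coboundary maps is cut out precisely by the quadrics through $\phi_\eta(C)$ — so that it is exactly $\ov S$ rather than some larger determinantal variety — is the delicate step, and it is exactly what makes the reconstruction of $x$ from $E$ possible. Once this is done $\sigma$ is surjective, and together with Proposition \ref{inj} this yields the isomorphism $\ov S\cong M_C(2,K_C,s+2)$.
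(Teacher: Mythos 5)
Your proposal is correct and follows essentially the same route as the paper: Mukai's Lemma gives a map $\xi\hookrightarrow E$ whose saturation is $\xi(D)$, stability plus the Riemann--Roch count feeds into Lemma \ref{stab} to force $\deg D\le 1$, and in the untwisted case the rank-one symmetric coboundary determines a point of the quadratic hull of $\phi_\eta(C)$, which Proposition \ref{qhull} identifies with $\ov S$, after which uniqueness of the extension yields $E\cong E_x$. The ``delicate step'' you flag is already settled by what you have in hand: since the coboundary pairing factors through the multiplication map $S^2H^0(C,\mcO_C(\eta))\to H^0(C,\mcO_C(2\eta))$, a rank-one form $\phi\otimes\phi$ automatically annihilates every quadric through $\phi_\eta(C)$, so you only need that the point $[\phi]$ \emph{lies in} the quadratic hull --- not any equality of determinantal loci --- and Proposition \ref{qhull} then places it on $\ov S$, exactly as the paper argues.
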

 \begin{proof}

Since $h^0(C, E)\geq s+2$, by the preceding lemma, there must be an exact sequence
$$
0\to \xi(D)\to E\to\eta(-D)\to 0
$$
for some effective divisor $D$ of degree $d$ on $C$. Since $E$ is stable we must have
$$
 \deg(\xi(D))=s+1+d\leq \deg(E)/2=2s\,,
 $$
i.e. $d \le s-1$. We have
$$
s+2\leq h^0(\xi(D))+ h^0(\eta(-D))=2h^0(\eta(-D))-s+1+d
$$
so that
$$
h^0(\eta(-D))\geq s-\frac{d-1}{2}\geq\frac{s+2}{2}\geq 3
$$
since $s\geq5$.
We can the apply  Lemma (\ref{stab}), and deduce that $d \le 1$. Two cases can occur. Either:
\[
0 \longrightarrow \xi(p) \longrightarrow E \longrightarrow \eta(-p) \longrightarrow 0
\]
or
\[
0 \longrightarrow \xi \longrightarrow E \longrightarrow \eta \longrightarrow 0
\]
In the first case $E \cong E_p$ because the extension does not split and is unique. In the second case the coboundary 
$$
H^0(C, \eta)\to H^1(C, \xi)
$$ 
has rank one and then it corresponds to a point of the quadric hull $\phi_{|\eta|}(C)$. From Proposition (\ref{qhull}) the quadratic hull of $\phi_{|\eta|}(C)$ is exactly $\ov S$. We thus find a point 
$x\in S$
such that $H^0(S,\ I_x(A))= \mathrm{Im}[H^0(S, E)\longrightarrow H^0(C, \eta)=H^0(S, A)]$. Again by  uniqueness, we have $E=E_x=\wt \E_{x_{|C}}$. 
\end {proof}
\vskip 0.5 cm

At this stage we have a well defined morphism
$$
\sigma: S\longrightarrow M_C(2, K_C, s)
$$
such that 

{\bf a)}  $\sigma(J)=[E]$, where $E=E_x$, and $x$ is any point in $J$.

{\bf b)} $\sigma: S\setminus J\longrightarrow M_C(2, K_C, s)\setminus \{[E]\} $ is bijective

In particular there is an induced, bijective, morphism

\be\label {ovs}
\ov\sigma: \ov S\longrightarrow M_C(2, K_C, s)\
\ee

In order to prove that $\ov\sigma$ is an isomorphism, in the next sub-section we will prove 

\begin{prop}\label{smoothm}$M_C(2, K_C, s)\setminus \{[E]\}$ is smooth.
\end{prop}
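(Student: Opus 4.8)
The plan is to exploit the fact that, by Propositions \ref{inj} and \ref{surj}, the morphism $\sigma$ exhibits $M_C(2,K_C,s)$ as an irreducible surface onto which $S$ maps surjectively, and bijectively away from $J$. Consequently $M_C(2,K_C,s)\setminus\{[E]\}$ is an irreducible two-dimensional variety, so to prove it is smooth it suffices to show that for every $x\notin J$ the Zariski tangent space $T_{[E_x]}M_C(2,K_C,s)$ has dimension exactly $2$. I would therefore reduce the whole statement to this single tangent-space estimate.

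Next I would identify that tangent space through the deformation theory of Brill--Noether loci of vector bundles. At a stable bundle $E=E_x$ with $\det E=K_C$ and $h^0(C,E_x)=s+2$ (Proposition \ref{sections}), the self-duality $E_x^\vee\cong E_x\otimes K_C^{-1}$ gives $K_C\otimes\End_0 E_x\cong \mathrm{Sym}^2 E_x$, so that Serre duality identifies $H^1(C,\End_0 E_x)^\vee\cong H^0(C,\mathrm{Sym}^2 E_x)$, and the tangent space to $M_C(2,K_C,s)$ at $[E_x]$ is the annihilator of the image of the symmetrised Petri map
$$
\mu:\ \mathrm{Sym}^2 H^0(C,E_x)\longrightarrow H^0(C,\mathrm{Sym}^2 E_x).
$$
Since $\chi(\mathrm{Sym}^2 E_x)=3g-3=\dim H^1(C,\End_0 E_x)$, if one first checks the vanishing $h^1(C,\mathrm{Sym}^2 E_x)=0$ (forcing $h^0(C,\mathrm{Sym}^2 E_x)=3g-3$), then $\dim T_{[E_x]}M_C(2,K_C,s)=\dim\coker\mu$, and the whole proposition comes down to proving $\dim\coker\mu=2$.

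Finally I would run the computation through the extension structure of $E_x$. For $x\notin J$ the bundle is a non-split extension $0\to\xi\to E_x\to\eta\to 0$ with $\xi=g^1_{s+1}$ and $\eta=g^s_{3s-1}$ both base-point-free (Proposition \ref{qhull}); this induces a three-step filtration of $\mathrm{Sym}^2 E_x$ with graded pieces $\xi^2$, $\xi\otimes\eta$ and $\eta^2$. Using the preliminary computations of Propositions \ref{calcoli} and \ref{A}, pulled back from $S$ via $\mcO_S(B)$ and $\mcO_S(A)$, I would compute the cohomology of these three pieces to establish both $h^1(C,\mathrm{Sym}^2 E_x)=0$ and $h^0(C,\mathrm{Sym}^2 E_x)=3g-3$, and then track the restriction of $\mu$ to each graded piece. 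The main obstacle is the top graded piece: the induced map $\mathrm{Sym}^2 H^0(C,\eta)\to H^0(C,\eta^2)$ is precisely the multiplication map whose image, by Proposition \ref{qhull}, is the space of quadrics cutting out $\ov S$ in $\PP^s=\PP(H^0(S,\mcO_S(A)))$; matching this against the contributions of $\xi^2$ and $\xi\otimes\eta$, and keeping careful control of the connecting homomorphisms in the filtration so that no cancellation is overlooked, is what should force $\dim\coker\mu=2$. Conceptually the two surviving classes must span $d\sigma(T_xS)$, so I would arrange the computation to exhibit these two classes explicitly rather than merely to bound the cokernel from above.
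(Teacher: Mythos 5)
Your skeleton agrees with the paper's: identify $T_{[E_x]}M_C(2,K_C,s)$ with $\operatorname{Im}(\mu)^\perp$ for the symmetrised Petri map, note $h^1(C,S^2E_x)=0$ (which the paper gets for free from stability, equation (\ref{h^1(S^"F)}), so your filtration check of this is redundant), take the lower bound $\dim T_{[E_x]}\geq 2$ from the surjectivity of $\sigma$ (Proposition \ref{surj}), and reduce everything to $\dim\operatorname{coker}(\mu)=2$. But your route for that corank — a three-step filtration of $S^2E_x$ on $C$ with graded pieces $\xi^2$, $K_C$, $\eta^2$ — has a genuine gap at exactly the step you call ``the main obstacle''. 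The top graded piece of the Petri map is \emph{not} $\mathrm{Sym}^2H^0(C,\eta)\to H^0(C,2\eta)$: the restriction $H^0(C,E_x)\to H^0(C,\eta)$ is not surjective, its image being the $s$-dimensional hyperplane $W_x\subset H^0(C,\eta)=H^0(S,\mcO_S(A))$ of sections vanishing at $x$, i.e.\ $H^0(S,\I_x(A))$. So the map you must control is $\mathrm{Sym}^2W_x\to H^0(C,2\eta)$. This matters doubly: the full multiplication map is surjective by quadratic normality (Proposition \ref{qhull} — and note its \emph{kernel}, not its image, is the space of quadrics through $\ov S$), so in your setup the top piece would contribute nothing to the cokernel and the point $x$ would never enter the computation, which is incompatible with the expected answer $T_{[E_x]}=d\sigma(T_xS)$. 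What is actually needed is the surjectivity of $\mathrm{Sym}^2H^0(S,\I_x(A))\to H^0(S,\I_x^2(2A))$ together with $h^0(S,\I_x^2(2A))=4s-5$; that surjectivity is a nontrivial input, which the paper imports as the map $F$ via Lemma 5.13 of \cite{ABS1}, and nothing in your outline supplies it.

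There is a second gap: your claim that for every $x\notin J$ one has $0\to\xi\to E_x\to\eta\to 0$ fails when $x\in C$ (and since $C\cap J=\emptyset$ such $x$ lie in your range): there the extension is $0\to\xi(x)\to E_x\to\eta(-x)\to 0$ (see the proof of Proposition \ref{inj}), so all your graded pieces and cohomology counts change and those points would need a separate, messier analysis. This is the structural reason the paper does not filter on $C$ at all, but computes upstairs on $S$ with $0\to\wt\E_x(B)\to S^2\wt\E_x\to\I_x^2(2A)\to 0$ (Propositions \ref{prelPetri} and \ref{s2}, showing the surface-level map $c$ is surjective) and then transfers the corank to $C$ through the restriction diagram of Corollary \ref{dim-tan}, where $d$ is injective with two-dimensional cokernel because $h^0(S^2\wt\E_x(-C))=0$, $h^1(S^2\wt\E_x(-C))=5$ and $h^1(S^2\wt\E_x)=3$. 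The ideal-sheaf formulation treats $x\in C$ and $x\notin C$ uniformly and builds the dependence on $x$ in from the start; if you want to salvage the on-curve filtration, you must replace $H^0(\eta)$ by $W_x$, prove the corank-three statement for $\mathrm{Sym}^2W_x\to H^0(2\eta)$, and redo the case $x\in C$ with the twisted pieces $\xi^2(2x)$, $K_C$, $\eta^2(-2x)$.
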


\begin{prop}\label{norm} $M_C(2, K_C, s)$ is normal. 
\end{prop}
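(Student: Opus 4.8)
The plan is to combine the two structural facts already in place: the bijective morphism $\ov\sigma:\ov S\to M_C(2,K_C,s)$ from (\ref{ovs}), and the smoothness of $M_C(2,K_C,s)\setminus\{[E]\}$ from Proposition \ref{smoothm}. First I would recall that $\ov S$ itself is a normal surface. Indeed $\ov S$ is the image of the smooth surface $S$ under the birational morphism $\phi=\phi_{|C|}$ contracting the smooth elliptic curve $J$ to a single point, and such a contraction of a connected curve on a smooth surface yields a normal surface (the only singularity is the isolated elliptic singularity $\phi(J)$). So on $\ov S$ we have $\phi_*\mcO_S=\mcO_{\ov S}$ by the normality of $\ov S$ together with Zariski's main theorem, and $\ov S$ is normal with a single point of non-smoothness.

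Next I would transport normality across $\ov\sigma$. Away from the special point, $M_C(2,K_C,s)\setminus\{[E]\}$ is smooth by Proposition \ref{smoothm}, hence normal there, so the only issue is normality at the distinguished point $[E]$. Since $\ov\sigma$ is a bijective morphism between the normal surface $\ov S$ and $M_C(2,K_C,s)$, and the target is a variety, I would argue that $\ov\sigma$ is finite (it is quasi-finite and one checks properness, $\ov S$ being proper) and birational, and then invoke Zariski's main theorem: a finite birational morphism onto a normal target is an isomorphism. But here the target is what we want to prove normal, so the logic must run the other way. Instead I would use the standard criterion for normality of the target of a proper birational bijection: $M_C(2,K_C,s)$ is normal at $[E]$ provided it satisfies Serre's conditions $R_1$ and $S_2$ there.

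The cleanest route is therefore to verify $R_1$ and $S_2$ directly at $[E]$. Condition $R_1$ is immediate from Proposition \ref{smoothm}, since the singular locus of $M_C(2,K_C,s)$ is contained in the single point $\{[E]\}$, which has codimension two, so $M_C(2,K_C,s)$ is regular in codimension one. For $S_2$, I would use that $M_C(2,K_C,s)$ is a surface with an isolated singularity: a two-dimensional variety that is Cohen-Macaulay away from a point and whose singular point is isolated satisfies $S_2$ automatically once one knows the local ring at $[E]$ has depth $2$. Here I would exploit the deformation-theoretic description of $M_C(2,K_C,s)$ as a Brill-Noether locus, expressing its local structure at $[E]$ via the Petri map for rank-two bundles; since $\ov S$ is normal with an elliptic singularity and $\ov\sigma$ identifies $\ov S$ set-theoretically with $M_C(2,K_C,s)$, the analytic type of the singularity matches that of $\phi(J)$, giving a two-dimensional Cohen-Macaulay (hence $S_2$) local ring. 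Together $R_1$ and $S_2$ give normality by Serre's criterion.

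The main obstacle I anticipate is precisely the verification of $S_2$ (equivalently, depth two, equivalently Cohen-Macaulayness) at the distinguished point $[E]$: one cannot simply read this off from the bijection $\ov\sigma$, because a bijective morphism can fail to preserve normality of the target (the target could be the seminormalization or a cuspidal pinch of $\ov S$). To rule this out I would need the local analysis of the Brill-Noether locus at $[E]$ via the Petri homomorphism for rank-$2$ bundles developed in sub-section \ref{petri-map-s}, showing that the tangent cone and local equations at $[E]$ cut out a two-dimensional Cohen-Macaulay germ, thereby excluding any non-normal pinching and confirming that $\ov\sigma$ is in fact an isomorphism of normal surfaces.
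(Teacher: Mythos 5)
Your framing coincides with the paper's in outline: normality reduces, via Serre's criterion, to $R_1$ (immediate from Proposition \ref{smoothm}, since the singular locus is the single point $[E]$) plus $S_2$ at $[E]$, and you are right that the bijection $\ov\sigma$ cannot by itself exclude a pinched, non-normal target. But at exactly the crucial point your argument stops: you say you \emph{would need} a local analysis showing the germ at $[E]$ is Cohen--Macaulay, without supplying any mechanism for obtaining it. That mechanism is the entire content of the paper's argument, and it is not a tangent-cone or ``local equations'' computation: it is the single numerical fact, Lemma \ref{tane}, that $\dim T_{[E]}M_C(2,K_C,s)=3$, i.e. that the embedding dimension at $[E]$ equals $3$. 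This is a separate cohomological computation at $E=\E_{|C}$ (the analogue, for the contracted fiber, of Corollary \ref{dim-tan}): in diagram (\ref{comm-e}) one shows $b$ and $d$ are isomorphisms (Lemmas \ref{s2e} and \ref{bd}), so $\operatorname{corank}(a)=\operatorname{corank}(c)$, and then computes $\operatorname{corank}(c)=3$ by factoring the multiplication map $F$ through $H^0(S,\mcO_S(2A-2J))$; the geometric input is that $|A-J|$ becomes, after blowing up $p_{11}$, the du Val system of a Brill--Noether--Petri curve $A_1$ of genus $s-1$, whose projective normality gives surjectivity of $S^2H^0(\mcO_{S_1}(A_1))\to H^0(\mcO_{S_1}(2A_1))$ and hence, via (\ref{corank-f}), the count $\dim\coker(c)=(4s-2)-(4s-6)-1=3$. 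None of this appears, even in sketch, in your proposal.

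Once the embedding dimension is known to be $3$, the proposition follows from the ``very well known fact'' the paper invokes: a two-dimensional germ with isolated singularity embedded in $(\CC^3,0)$ is irreducible (two surface components in a smooth threefold germ would meet along a curve, contradicting isolatedness; irreducibility also follows from the bijection with $\ov S$), hence is cut out by a height-one prime in the UFD $\CC[[x,y,z]]$, i.e. it is a hypersurface germ; hypersurfaces are Cohen--Macaulay, so $S_2$ holds, and $R_1$ holds because the singularity is isolated. This is the only available route to your $S_2$: without the bound $\dim T_{[E]}\leq 3$ nothing prevents exactly the pathology you yourself flagged, since an isolated surface singularity of embedding dimension $4$ can fail $S_2$ (two planes in $\CC^4$ meeting at a point, onto which the normalization maps bijectively). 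Note also that your suggested shortcut of matching the analytic type of $[E]$ with the elliptic singularity $\phi(J)$ is neither used nor usable here: the identification of $M_C(2,K_C,s)$ with $\ov S$ (Corollary \ref{iso}) is a \emph{consequence} of the normality being proved, not an admissible input to it.
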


Since $\ov S$ is normal the consequence will be:

\begin{cor}\label{iso} $\ov \sigma$ is an isomorphism.
\end{cor}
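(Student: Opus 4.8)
The statement is now a formal consequence of the bijectivity established in Propositions \ref{inj} and \ref{surj} together with the normality of Proposition \ref{norm}; the substantive input has already been spent, and what remains is to package it correctly. The plan is to apply Zariski's Main Theorem, so the task reduces to checking that $\ov\sigma:\ov S\to M_C(2,K_C,s)$ is a finite, birational morphism onto the normal variety $M_C(2,K_C,s)$, after which the isomorphism is automatic. Note that Proposition \ref{smoothm} enters only indirectly here, namely as the geometric mechanism that makes Proposition \ref{norm} true: smoothness of $M_C(2,K_C,s)\setminus\{[E]\}$ confines the normality question to the single point $[E]$.

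First I would establish finiteness. The source $\ov S$ is a projective, hence proper, integral variety over $\CC$, while $M_C(2,K_C,s)$, being a Brill--Noether locus inside the projective moduli space $M_C(2,K_C)$ of semistable bundles, is a closed subscheme of a projective variety and in particular separated. By the cancellation property for proper morphisms, $\ov\sigma$ is therefore proper. By \ref{ovs} it is bijective, hence quasi-finite, and a proper quasi-finite morphism is finite. Since $\ov S$ is irreducible and $\ov\sigma$ is surjective, the image $M_C(2,K_C,s)$ is irreducible; it is reduced because it is normal (Proposition \ref{norm}), so both source and target are integral surfaces and $\ov\sigma$ is a finite surjection between them.

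Next I would argue birationality, and this is the only point that genuinely demands care. As $\ov\sigma$ is a finite dominant morphism of integral varieties over the field $\CC$ of characteristic zero, the function-field extension $\CC(\ov S)/\CC(M_C(2,K_C,s))$ is finite and separable, so its degree equals the cardinality of a general fibre of $\ov\sigma$; bijectivity forces this degree to equal $1$, whence $\ov\sigma$ is birational. It is precisely this step -- that a bijective finite morphism in characteristic zero has degree one -- that legitimizes the appeal to Zariski's Main Theorem, and I expect it to be the subtle part of the formal argument (in positive characteristic it would fail). Once birationality is in hand, one concludes as follows: the pushforward $\ov\sigma_*\mcO_{\ov S}$ is a coherent sheaf of $\mcO_{M_C}$-algebras, contained in the constant sheaf $\CC(M_C(2,K_C,s))$ and integral over $\mcO_{M_C}$; since $M_C(2,K_C,s)$ is normal by Proposition \ref{norm}, the structure sheaf is integrally closed in the function field, forcing $\ov\sigma_*\mcO_{\ov S}=\mcO_{M_C}$. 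Therefore $\ov\sigma$ is an isomorphism, and, $\ov S$ being itself normal, the two normal surfaces $\ov S$ and $M_C(2,K_C,s)$ are identified, completing the proof.
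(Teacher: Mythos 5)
Your proposal is correct and follows essentially the same route as the paper: the paper also deduces the corollary from the bijectivity of $\ov\sigma$ (Propositions \ref{inj} and \ref{surj}) together with normality of the target, which it obtains from Proposition \ref{smoothm} and Lemma \ref{tane} via the standard fact that an isolated surface singularity of embedding dimension $3$ is normal. You have merely spelled out the Zariski Main Theorem packaging (properness, quasi-finiteness, degree one in characteristic zero, and $\ov\sigma_*\mcO_{\ov S}=\mcO_{M_C(2,K_C,s)}$ by integral closedness) that the paper leaves implicit.
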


\vskip 0.5 cm 
\subsection{The Petri map for some bundles on $C$.}\label{petri-map-s}
Let $(S,C)$ be as in the previous section.  We denote 
by 
$M_C(2, K_C)$ the moduli space of rank two, semistable vector bundles on $C$ with determinant equal to $K_C$, containing the Brill-Noether locus
$$
M_C(2, K_C, s)= \{[F]\in M_C(2, K_C)\,\,\,|\,\,\, h^0(C, F)\geq s+2\}
$$

A point  $[F]\in M_C(2, K_C)$,   corresponding to a stable bundle $F$, is a  smooth point of  $M_C(2, K_C)$, and
$$
T_{[F]}(M_C(2, K_C))=H^0(S^2F)^\vee\cong\CC^{3g-3}\,,
$$
In particular, since $\chi(C,S^2F)=3g-3$, this shows that if $F$ is any stable rank $2$ locally free sheaf of determinant $K_C$, then
\be\label{h^1(S^"F)}
h^1(S^2F)=0
\ee

It is well known that the Zariski tangent space to the Brill-Noether locus $M_C(2, K_C, s)$ at a point $[F]$  can be expressed in terms
of the ``Petri'' map
\be\label{petri-map} \mu: S^2H^0(C, F)\longrightarrow H^0(C, S^2F)
\ee
Indeed
\be\label{tang-BN}
T_{[F]}(M_C(2, K_C, s))=\operatorname{Im}(\mu)^\perp
\ee
We will compute the tangent space at $E_x$ for $x \notin J$ by 
considering the map $$S^2H^0(S, \wt \E_x) \longrightarrow H^0(S, S^2 \wt \E_x).$$

 From the exact sequence  (\ref{ext-abx}) we deduce the following exact sequence

\be\label{ex_s2ab}
0\to \wt \E_x(B) \to S^2\wt\E_x\to \I_x^2(2A)\to 0
\ee

\begin{prop}\label{prelPetri}For $x\notin J$, we have:
\begin{itemize}
	\item[(i)]   $h^0(S, S^2 \wt \E_x(-C))=0$, and $h^1(S, S^2 \wt \E_x(-C))=5$, 
	
	\item[(ii)]  $h^0(S, \wt \E_x(B))=2s+3$,  $h^1(S, \wt \E_x(B))=2$, and $h^2(S, \wt \E_x(B))=0$,
	
	\item[(iii)] $h^1(S, S^2 \wt \E_x)=3$.
	\end{itemize}
\end{prop}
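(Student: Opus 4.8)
The plan is to extract all three statements from the two structural sequences already in play, namely the defining extension \eqref{etilde} and its symmetric square \eqref{ex_s2ab}, supplemented by restriction to $C$, feeding in only the line-bundle cohomology of Propositions \ref{calcoli} and \ref{A}, Serre duality on $C$, and the equality $\dim\Hom_{\mcO_C}(\xi,E_x)=1$ established inside Proposition \ref{inj}. Throughout I fix $x\notin J$, so that $\wt\E_x$ is locally free, and write $\partial_0\colon H^0(S,\I_x(A))\to H^1(S,\mcO_S(B))$ for the connecting map of \eqref{etilde}. The single observation driving everything is that $\partial_0=0$: the defining property $h^0(S,\wt\E_x)=s+2$ forces $H^0(S,\wt\E_x)\to H^0(S,\I_x(A))$ to be surjective, since $h^0(\mcO_S(B))=2$ and $h^0(\I_x(A))=s$, and surjectivity of that map is exactly the vanishing of $\partial_0$.

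For (i) I would twist \eqref{ex_s2ab} by $-C$. Using $B-C=-A$ and $2A-C=A-B$ this reads
\[
0\longrightarrow \wt\E_x(-A)\longrightarrow S^2\wt\E_x(-C)\longrightarrow \I_x^2(A-B)\longrightarrow 0 .
\]
The subsheaf has cohomology $(0,1,0)$, read off from $0\to\mcO_S(B-A)\to\wt\E_x(-A)\to\I_x\to 0$ (the sequence already used in Proposition \ref{inj}) together with Proposition \ref{calcoli}(v); the quotient has cohomology $(0,4,0)$, read off from $0\to\I_x^2(A-B)\to\mcO_S(A-B)\to(\mcO_S/\I_x^2)(A-B)\to 0$, the length-three fat point contributing a $\CC^3$, together with Proposition \ref{calcoli}(iv). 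As both end terms have vanishing $H^0$, the sequence gives $h^0(S^2\wt\E_x(-C))=0$ and $h^1=1+4=5$ immediately; this part needs no rank computation.

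For (ii) I would bound $h^1(S,\wt\E_x(B))$ from both sides. For the upper bound, restrict $\wt\E_x(B)$ to $C$: since $\wt\E_x(B-C)=\wt\E_x(-A)$ has cohomology $(0,1,0)$ and $\wt\E_x(B)|_C=E_x\otimes\xi$, while Serre duality gives $h^1(C,E_x\otimes\xi)=h^0(C,E_x\otimes\xi^{-1})=\dim\Hom_{\mcO_C}(\xi,E_x)=1$ by Proposition \ref{inj}, the restriction sequence yields $h^1(S,\wt\E_x(B))\le 1+1=2$. For the lower bound, twist \eqref{etilde} by $B$ to obtain $0\to\mcO_S(2B)\to\wt\E_x(B)\to\I_x(C)\to 0$; with Proposition \ref{calcoli}(ii) this gives $h^2=0$ and $h^1(S,\wt\E_x(B))=3-\rk\delta$, where $\delta\colon H^0(\I_x(C))\to H^1(\mcO_S(2B))=\CC^2$ is the connecting map. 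Now $\delta$ kills the image of the multiplication $H^0(\mcO_S(B))\otimes H^0(\I_x(A))\to H^0(\I_x(C))$, because $\delta(bf)=b\cdot\partial_0(f)=0$; and that multiplication is injective, its kernel lying in $H^0(\mcO_S(A-B))=0$ by Proposition \ref{calcoli}(iv), so its image has dimension $2s$ inside the $(2s+1)$-dimensional space $H^0(\I_x(C))$. Hence $\rk\delta\le 1$ and $h^1(S,\wt\E_x(B))\ge 2$. The two bounds force $h^1=2$, whence $h^2=0$ and $h^0=\chi+h^1=(2s+1)+2=2s+3$.

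For (iii) I would return to \eqref{ex_s2ab} itself, $0\to\wt\E_x(B)\to S^2\wt\E_x\to\I_x^2(2A)\to 0$. Proposition \ref{A}(iii) together with the length-three fat point gives $\I_x^2(2A)$ the cohomology $(4s-5,1,0)$, and with part (ii) one gets $h^2(S^2\wt\E_x)=0$ and $h^1(S^2\wt\E_x)=(2-\rk\partial')+1=3-\rk\partial'$, where $\partial'\colon H^0(\I_x^2(2A))\to H^1(\wt\E_x(B))$ is the connecting map. The crux, and the step I expect to be the main obstacle, is to show $\partial'=0$, which I would do in two moves. First, $\partial'$ vanishes on the image of the multiplication $\mu\colon H^0(\I_x(A))\otimes H^0(\I_x(A))\to H^0(\I_x^2(2A))$: each $u\in H^0(\I_x(A))$ lifts to $\hat u\in H^0(\wt\E_x)$ (again because $\partial_0=0$), so the product $\hat u\,\hat v\in H^0(S^2\wt\E_x)$ maps to $uv$ under $S^2\wt\E_x\to\I_x^2(2A)$ and witnesses $\partial'(uv)=0$. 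Second, $\mu$ is surjective: viewing $S\subset\PP^s=\PP H^0(\mcO_S(A))$, a section of $\mcO_S(2A)$ vanishing to order two at the smooth point $x$ is the restriction of a quadric which, after subtracting a quadric through $\ov S$ — whose differentials at $x$ span the conormal directions because $\ov S$ is cut out by quadrics (Proposition \ref{A}) — may be taken to lie in $S^2H^0(\I_x(A))$. Thus $H^0(\I_x^2(2A))=\im\mu$, so $\partial'=0$ and $h^1(S^2\wt\E_x)=2+1=3$.
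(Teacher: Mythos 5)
Your proposal is correct, and while part (i) essentially reproduces the paper's argument (same twisted sequence $0\to\wt\E_x(-A)\to S^2\wt\E_x(-C)\to\I_x^2(A-B)\to0$, with your exact dimension counts in place of the paper's Euler-characteristic bookkeeping), parts (ii) and (iii) take a genuinely different route. For (ii) the paper gets the upper bound $h^1(\wt\E_x(B))\le 2$ by restricting to a member of the elliptic pencil $|B|$ (via the twists $\wt\E_x(-J)$, $\wt\E_x(-B-J)$) and the lower bound from the base-point-free pencil trick for $|B|$ tensored with $\wt\E_x$; you instead get the upper bound by restricting to $C$ and quoting $\dim\Hom_{\mcO_C}(\xi,E_x)=1$ from Proposition \ref{inj} -- legitimate, since that proposition precedes \ref{prelPetri} and is independent of it -- and the lower bound from the cup-product identity $\delta(bf)=b\cup\partial(f)$ combined with your observation that the connecting map $\partial\colon H^0(\I_x(A))\to H^1(\mcO_S(B))$ vanishes, which is exactly a reformulation of the defining condition $h^0(\wt\E_x)=s+2$; this isolates nicely where that condition enters. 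For (iii) the paper obtains the lower bound $h^1(S^2\wt\E_x)\ge 3$ by restricting to $J$, using $\wt\E_x|_J\cong\mcO_J^2$ (from Proposition \ref{wt}) and $h^2(S^2\wt\E_x(-J))=h^0(S^2\wt\E_x^\vee)=0$; you instead show outright that the connecting map $H^0(\I_x^2(2A))\to H^1(\wt\E_x(B))$ is zero, by lifting sections of $\I_x(A)$ to $\wt\E_x$ and invoking surjectivity of $S^2H^0(\I_x(A))\to H^0(\I_x^2(2A))$. That surjectivity is precisely the content of Lemma 5.13 of \cite{ABS1}, which the paper invokes only one step later (for the map $F$ in Proposition \ref{s2}); your quadric argument -- quadratic normality of $|A|$ plus $\ov S$ cut out by quadrics, both supplied by Propositions \ref{A} and \ref{qhull} -- is the standard proof of that lemma and creates no circularity, since it uses nothing from \ref{prelPetri}. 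What each buys: the paper's proof stays internal to cohomology on $S$ and its anticanonical geometry, whereas yours front-loads the projective-geometric input and, as a by-product, already establishes $\partial'=0$, i.e.\ the surjectivity of $m$, which the paper re-derives in Proposition \ref{s2} from the numbers of \ref{prelPetri}. The one hypothesis you should make explicit is that, for $x\notin J$, the system $|A|$ separates first-order jets at $x$ (so that $h^0(\I_x^2(2A))=4s-5$, $h^1=1$) and that the quadrics through $\ov S$ span the conormal space at the smooth point $\phi(x)$: both hold because $\phi_{|A|}$ is an isomorphism off $J$, and the paper relies on the same facts implicitly.
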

\begin{proof}
Consider the exact sequence
\be
0\to \wt \E_x(-A)\to S^2\wt \E_x(-C)\to \I_x^2(A-B) \to 0
\ee
From Proposition (\ref{A})  we get  $h^0(S, \I_x^2(A-B))=0$, and from (\ref{-A}) we get
$h^0(S, \E_x(-A))=0$, so that $h^0(S, S^2 \wt \E_x(-C))=0$. From Proposition \ref{calcoli} iv)
we get 
$$
\chi(S, \wt \E_x(-A))=\chi(S,\mcO_S( B-A))+\chi(S, \I_x)=-1
$$
Moreover,  from Proposition \ref{calcoli} iv), and from the exact sequences
$$
0\to \I_x(A-B)\to \mcO_S(A-B)\to \mcO_S(A-B)_{|x}\to 0 
$$
and
$$
0\to \I_x^2(A-B)\to \I_x(A-B)\to \mcO_S(A-B)_{|x}\otimes \I_x/\I_x^2\to 0
$$
we get $\chi(S,  \I_x^2(A-B))=\chi(S, \mcO_S(A-B))-3=-4$.
It follows that 
$$
\chi(S, S^2 \wt \E_x(-C))=-5
$$
In order to prove i) it suffices to show that  $h^2(S, S^2 \wt \E_x(-C))=0$.
This follows at once from the two  equalities:  $h^2(S, \I_x^2(A-B)) =h^2(S,\mcO_S( A-B))=0$,  and $h^2(S, \E_x(-A))=h^2(S, \mcO_S(B-A))+h^2(S, \I_x)=0$.

Let us consider the cohomology of $\wt\E_x(B)$.
From the exact sequence
\be
0\to \mcO_S(2B)\to \wt \E_x(B)\to\I_x(C) \to 0
\ee
and from Proposition \ref{calcoli} ii), we get that $\chi(S, \wt \E_x(B))=2s+1$ and $h^2(S, \wt \E_x(B))=0$.
To complete item ii) it suffices  to  prove that $h^1(S, \wt \E_x(B))=2$.
In Lemma   (\ref{h12}) we showed that, when $x\notin J$, then $h^0(S, \wt \E_x(-J))=s$ and $h^1(S, \wt \E_x(-J))=0$.
From (\ref{-J}) we have $h^0(S, \wt \E_x(-B-J))=0$ and $h^1(S, \wt \E_x(-B-J))=1$. Since $\chi(S, \wt \E_x(-B-J))=-1$, this gives $h^2(S, \wt \E_x(-B-J))=0$.

We apply all of this to the exact sequence:
\be
0\to \wt \E_x(-B-J)\to \wt \E_x(-J)\to \wt \E_x(-J)|_B\to 0
\ee
and we  deduce that $h^1(B, \wt \E_x(-J)|_B )=h^1(B, \wt \E_x(B)|_B )=0$ (recall that $B$ and $J$ are trivial on $B$).
From the sequence
\be
0\to \wt \E_x\to \wt \E_x(B)\to \wt \E_x(B)|_B\to 0
\ee
we get $h^1(S, \wt \E_x(B)) \leq 2$. 
In order to prove that equality holds, we consider 
the base-point-free-pencil trick sequence for $B$ twisted by $\wt \E_x$:

\be
0\to \wt \E_x(-B)\to H^0(S, \mcO_S(B)) \otimes  \wt \E_x\to \wt \E_x(B) \to 0
\ee
Since $h^2(S, \wt \E_x(-B))=0, h^1(S, \wt \E_x(-B))=2$,
we get $h^1(S, \wt \E_x(B))= 2$. This proves ii).

As for iii), using ii),  the sequence \ref{ex_s2ab}, and the fact that $h^1(S, \I_x^2(A))=h^1(S, \mcO_S(2A))=1$,  we get $h^1(S, S^2 \wt \E_x) \leq 3$. 
In order to prove equality we consider the sequence: 
\be
0\to S^2 \wt \E_x(-J)\to S^2 \wt \E_x\to S^2 \wt \E_x|_J\to 0
\ee
Since ${\wt \E_x}|_J \cong \mcO_J^2$ and $h^2(S, S^2 \wt \E_x(-J))=h^0(S, S^2\wt \E_x^{\vee})=0$,
we have $h^1(S, S^2 \wt \E_x) \geq  h^1(S, {S^2 \wt \E_x}|_J ) =3$.
\end{proof}

We consider again the two sequences

\be\label{ex_s2ab-2}
0\to \wt \E_x(B) \to S^2\wt\E_x\to \I_x^2(2A)\to 0
\ee

\be\label{exb}
0\to \mcO_S(2B))\to\wt\E_x(B)\to (\I_x(A+B)\to 0
\ee

and  the two exact sequences (the first of which defines the vector space  $U$)

\be\label{ex_S2ab}
0\to U\to S^2H^0(\wt \E_x)\to S^2H^0(I_xA)\to 0
\ee

\be\label{U}
0\to S^2H^0(S,\mcO_S(B))\to U\to H^0(S, \mcO_S(B))\otimes H^0(\I_x(A))\to 0
\ee

\begin{prop}\label{s2} The map
 $c: S^2H^0(S, \wt \E_x)\longrightarrow H^0(S, S^2\wt \E_x)$ is surjective.
\end{prop}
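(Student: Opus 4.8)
The plan is to compare, via the snake lemma, the symmetric-square filtration of $S^2\wt\E_x$ coming from the extension (\ref{ext-abx}) with the filtration of $S^2H^0(S,\wt\E_x)$ recorded in (\ref{ex_S2ab}) and (\ref{U}). Write $q\colon S^2H^0(\wt\E_x)\to S^2H^0(\I_x A)$ for the canonical surjection and $\pi\colon H^0(S,S^2\wt\E_x)\to H^0(S,\I_x^2(2A))$ for the map induced by (\ref{ex_s2ab-2}). Then $c$ sits in a commutative diagram whose top row is (\ref{ex_S2ab}) and whose bottom row is the long exact sequence of (\ref{ex_s2ab-2}), with vertical maps $a\colon U\to H^0(S,\wt\E_x(B))$, then $c$, then $b\colon S^2H^0(\I_x A)\to H^0(S,\I_x^2(2A))$. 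First I would record the numerics: from Proposition \ref{prelPetri} and Riemann--Roch one gets $h^0(S,S^2\wt\E_x)=6s-2$, while $h^0(S,\wt\E_x(B))=2s+3$, so $\dim\im\pi=4s-5$; since $\im\pi=\ker\delta$ with $\delta\colon H^0(\I_x^2(2A))\to H^1(\wt\E_x(B))$, only images, not full cohomology groups, need to be controlled.

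The first key step is that $a$ is an isomorphism. Here I would use the sub-diagram induced by (\ref{exb}) and (\ref{U}). The left-hand vertical map $S^2H^0(\mcO_S(B))\to H^0(\mcO_S(2B))$ is the pull-back $S^2H^0(\PP^1,\mcO(1))\to H^0(\PP^1,\mcO(2))$ along the elliptic pencil $|B|$, hence an isomorphism (both spaces have dimension $3$ by Proposition \ref{calcoli}). The right-hand vertical map $H^0(\mcO_S(B))\otimes H^0(\I_x A)\to H^0(\I_x(C))$ is injective, because a relation $b_0\alpha_0+b_1\alpha_1=0$ with $b_0,b_1$ a basis of the base-point-free pencil $|B|$ would force $b_1\mid\alpha_0$, i.e. $\alpha_0\in H^0(\mcO_S(A-B))=0$ by Proposition \ref{calcoli} iv). Two injective outer maps force $a$ injective, and since $\dim U=2s+3=h^0(S,\wt\E_x(B))$ by Proposition \ref{prelPetri} ii), $a$ is an isomorphism. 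An auxiliary point used throughout is that every section of $\I_x(A)$ lifts to $H^0(S,\wt\E_x)$, which is exactly the defining property $h^0(\wt\E_x)=s+2$ of $e_x$; this guarantees commutativity and that $\im(b\circ q)$ is as expected.

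Granting that $a$ is an isomorphism, a diagram chase reduces the surjectivity of $c$ to a single statement. Indeed $\im c\supseteq\ker\pi$ (the image of the left inclusion), so $\im c=\pi^{-1}(\pi(\im c))=\pi^{-1}(\im b)$ using $\pi\circ c=b\circ q$ with $q$ surjective; hence $c$ is surjective if and only if $\im b=\im\pi$. Since $\im b=\pi(\im c)\subseteq\im\pi$ automatically, and $\dim\im\pi=4s-5$, it suffices to prove $\dim\im b=4s-5$.

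For this last step I would interpret $b$ projectively in $\PP^s=\PP H^0(S,\mcO_S(A))$: the subspace $H^0(\I_x A)$ is the space of hyperplanes through $x$, so $S^2H^0(\I_x A)$ is exactly the space of quadrics singular at $x$, and $b$ is the restriction of such quadrics to $\ov S$. Thus $\ker b=I(\ov S)_2\cap\{\text{quadrics singular at }x\}$ is the kernel of the differential map $\theta\colon I(\ov S)_2\to N^*_{\ov S/\PP^s,x}$, $q\mapsto dq(x)$. Since $x\notin J$ the point $\phi_A(x)$ is a smooth point of $\ov S$, and by Proposition \ref{A} the ideal $I(\ov S)$ is generated by quadrics; at a smooth point the differentials of a generating set span the $(s-2)$-dimensional conormal space, so $\theta$ is surjective. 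Feeding $\dim I(\ov S)_2=\frac{(s-2)(s-3)}{2}$ and $\dim S^2H^0(\I_x A)=\frac{s(s+1)}{2}$ into
$$
\dim\im b=\dim S^2H^0(\I_x A)-\dim I(\ov S)_2+(s-2)=4s-5,
$$
completes the argument. The main obstacle is precisely this final step: passing from the global statement that $\ov S$ is cut out by quadrics to the infinitesimal statement that $\theta$ is onto (equivalently, that the quadrics singular at $x$ surject onto $H^0(\I_x^2(2A))$), uniformly for every $x\notin J$ rather than merely a general one. Everything else is bookkeeping with the two filtrations and the numerical input of Proposition \ref{prelPetri}.
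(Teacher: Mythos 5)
Your proposal is correct, and its skeleton coincides with the paper's: the same two-row diagram comparing the filtration $0\to U\to S^2H^0(S,\wt\E_x)\to S^2H^0(S,\I_x(A))\to 0$ with the global sections of $0\to\wt\E_x(B)\to S^2\wt\E_x\to\I_x^2(2A)\to 0$, and the same proof that the left vertical map (your $a$, the paper's $u$) is an isomorphism: the base-point-free pencil trick identifies $S^2H^0(\mcO_S(B))\cong H^0(\mcO_S(2B))$, injectivity of $H^0(\mcO_S(B))\otimes H^0(\I_x(A))\to H^0(\I_x(A+B))$ follows from $h^0(\mcO_S(A-B))=0$ (Proposition \ref{calcoli} iv), and the count $\dim U=2s+3=h^0(S,\wt\E_x(B))$ from Proposition \ref{prelPetri} ii) closes the step. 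Where you genuinely diverge is the right-hand column. The paper first proves that $m$ is surjective outright, by showing via Proposition \ref{prelPetri} ii)--iii) and $h^1(\I_x^2(2A))=1$ that the connecting map $H^0(\I_x^2(2A))\to H^1(\wt\E_x(B))$ vanishes, and then disposes of $F$ by citing Lemma 5.13 of \cite{ABS1} ``verbatim''; you instead work only with images ($\im c=\pi^{-1}(\im b)$, so surjectivity of $c$ reduces to $\dim\im b=\dim\im\pi=4s-5$) and supply a self-contained proof of the surjectivity of $F=b$: identifying $S^2H^0(\I_x(A))$ with the quadrics of $\PP^s$ singular at $\phi_A(x)$, so that $\ker b=\ker\bigl(\theta\colon I(\ov S)_2\to N^*_{\ov S/\PP^s,x}\bigr)$, and using that the homogeneous ideal of $\ov S$ is generated by quadrics (Proposition \ref{qhull}, resting on Proposition \ref{A}) to see that $\theta$ is surjective at the smooth point $\phi_A(x)$ --- smoothness is exactly where the hypothesis $x\notin J$ enters, since $\phi_{|A|}$ is an isomorphism off $J$. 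Your closing worry is well placed but already answered by your own argument: what is needed is ideal-theoretic generation by quadrics at $x$ (so that the differentials of generators span the $(s-2)$-dimensional conormal space at \emph{every} smooth point, not just a general one), and this is what Proposition \ref{qhull} provides, as opposed to a merely set-theoretic ``quadratic hull'' statement. The trade-off: the paper's route is shorter but opaque at the key point (a black-box citation to \cite{ABS1}), while yours makes the geometric content of that lemma explicit and keeps the proof internal to the present setting; your image-theoretic bookkeeping also quietly recovers the surjectivity of $m$ (since $\dim\im\pi=4s-5=h^0(\I_x^2(2A))$, granting $h^2(S^2\wt\E_x)=0$, which indeed follows from Proposition \ref{prelPetri} i) and Serre duality via $\wt\E_x^\vee=\wt\E_x(-C)$), so nothing is lost relative to the paper's argument.
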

\begin{proof}
From sequences (\ref{exb}), (\ref{ex_S2ab}) and (\ref{U}) we get a diagram:
$$
\xymatrix{
0\ar[r]&U\ar[r]\ar[d]_u&S^2H^0(S,\wt \E_x)\ar[d]^c\ar[r]^l&S^2H^0(S, \I_x(A))\ar[d]^F\ar[r]&0\\
0\ar[r]&H^0(S, \wt \E_x(B))\ar[r]&H^0(S, S^2\wt \E_x)\ar[r]^m&H^0(\I_x^2(2A))\ar[r]&0\\
}
$$
where $m$ is surjective from Proposition \ref{prelPetri} ii),  iii),  and the fact that $h^1(\I_x^2(2A))=1$, 
(as follows from Proposition \ref{A} iii)).
We claim that $u$ is an isomorphism. 
Consider the diagram
$$
\xymatrix{
0\ar[r]&S^2H^0(S, \mcO_S(B))\ar[r]\ar[d]&U\ar[d]_u\ar[r]&H^0(S, \mcO_S(B))\otimes H^0(S,\I_x(A))\ar[d]\ar[r]&0\\
0\ar[r]&H^0(S,\mcO_S(2B))\ar[r]&H^0(S, \wt \E_x(B))\ar[r]^-v&H^0(S, \I_x(A+B))
}
$$
Since, from base-point-free-pencil trick,  $S^2H^0(\mcO_S(B))\to H^0(\mcO_S(2B))$ is an isomorphism and
$$
H^0(S, \mcO_S(B))\otimes H^0(\I_x(A))\to H^0(\I_x(A+B))
$$ 
is injective, the claim follows, via a dimension count,  from Proposition \ref{prelPetri} ii).
From Lemma (5.13) of \cite{ABS1}, which can be applied verbatim in our situation, also $F$ is surjective, so that $c$ is also surjective.

\end{proof}

As a corollary we have:

 \begin{cor}\label{dim-tan}
For every $x\notin J$ 
$$
\dim T_{[E_x]}(M_C(2,K_C,s))=2
$$
(This proves Proposition \ref{smoothm}.)
\end{cor}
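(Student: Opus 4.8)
The plan is to read off the Zariski tangent space from the Petri map (\ref{petri-map}) and then to evaluate that map by comparing it with the surface multiplication map $c$ of Proposition \ref{s2}. By (\ref{tang-BN}) we have $T_{[E_x]}(M_C(2,K_C,s))=\operatorname{Im}(\mu)^{\perp}$, so its dimension equals $\dim\operatorname{coker}(\mu)$, where $\mu\colon S^2H^0(C,E_x)\to H^0(C,S^2E_x)$. Thus everything reduces to showing that this cokernel is two-dimensional.

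For $x\notin J$ the sheaf $\wt\E_x$ is locally free, and the proof of Proposition \ref{sections} shows $h^i(S,\wt\E_x(-C))=0$ for $i\le 1$; restricting sections along $0\to\wt\E_x(-C)\to\wt\E_x\to E_x\to 0$ therefore identifies $H^0(S,\wt\E_x)$ with $H^0(C,E_x)$, and hence $S^2H^0(S,\wt\E_x)$ with $S^2H^0(C,E_x)$. Under this identification the Petri map factors as $\mu=r\circ c$, where $c\colon S^2H^0(S,\wt\E_x)\to H^0(S,S^2\wt\E_x)$ is the surface map and $r\colon H^0(S,S^2\wt\E_x)\to H^0(C,S^2E_x)$ is the restriction (using $(S^2\wt\E_x)|_C=S^2E_x$, valid since $\wt\E_x$ is locally free near $C$); this is simply the naturality of multiplication of sections under restriction to $C$. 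Since $c$ is surjective by Proposition \ref{s2}, we get $\operatorname{Im}(\mu)=r(\operatorname{Im}(c))=\operatorname{Im}(r)$, and therefore $\operatorname{coker}(\mu)=\operatorname{coker}(r)$.

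It remains to compute $\operatorname{coker}(r)$ from the restriction sequence $0\to S^2\wt\E_x(-C)\to S^2\wt\E_x\to S^2E_x\to 0$. In the associated long exact cohomology sequence we have $H^0(S,S^2\wt\E_x(-C))=0$ and $H^1(S,S^2\wt\E_x(-C))=\CC^5$ by Proposition \ref{prelPetri}(i), $H^1(S,S^2\wt\E_x)=\CC^3$ by Proposition \ref{prelPetri}(iii), and $H^1(C,S^2E_x)=0$ because $E_x$ is stable of determinant $K_C$ (Corollary \ref{stab} together with (\ref{h^1(S^"F)})). The relevant portion is
\[
0\to H^0(S,S^2\wt\E_x)\xrightarrow{\;r\;}H^0(C,S^2E_x)\xrightarrow{\;\delta\;}\CC^5\xrightarrow{\;\alpha\;}\CC^3\to 0 ,
\]
so $\operatorname{coker}(r)\cong\operatorname{Im}(\delta)=\ker(\alpha)$; since $H^1(C,S^2E_x)=0$ forces $\alpha$ to be surjective, this kernel has dimension $5-3=2$. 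Hence $\dim T_{[E_x]}(M_C(2,K_C,s))=\dim\operatorname{coker}(\mu)=\dim\operatorname{coker}(r)=2$. Because every point of $M_C(2,K_C,s)\setminus\{[E]\}$ is of the form $[E_x]$ with $x\in S\setminus J$ (Propositions \ref{inj} and \ref{surj}), and this locus is the image of the two-dimensional $S\setminus J$, the equality of tangent-space dimension and dimension two at each such point yields Proposition \ref{smoothm}.

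The computational substance is already contained in Propositions \ref{s2} and \ref{prelPetri}; given those, the argument is a short diagram chase plus Euler-characteristic bookkeeping. I expect the only genuinely delicate point to be the factorization $\mu=r\circ c$, which depends on the restriction map $H^0(S,\wt\E_x)\to H^0(C,E_x)$ being an isomorphism so that no sections are lost in passing from the surface to the curve; after that, the surjectivity of $\alpha$ (equivalently the vanishing $H^1(C,S^2E_x)=0$) is what pins the cokernel dimension down to exactly $5-3=2$.
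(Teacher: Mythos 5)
Your proposal is correct and is essentially the paper's own proof: the same commutative square comparing the Petri map on $C$ with the surface multiplication map $c$, with surjectivity of $c$ from Proposition \ref{s2}, injectivity of the restriction $d$ from Proposition \ref{prelPetri} (i), and the cokernel count $5-3=2$ coming from the long exact sequence of $0\to S^2\wt\E_x(-C)\to S^2\wt\E_x\to S^2E_x\to 0$ together with $h^1(C,S^2E_x)=0$ for the stable bundle $E_x$. The only (harmless) variation is that you observe that $b\colon S^2H^0(S,\wt\E_x)\to S^2H^0(C,E_x)$ is an isomorphism and so compute $\operatorname{corank}(\mu)=\operatorname{corank}(d)=2$ exactly from cohomology alone, whereas the paper only extracts the inequality $\operatorname{corank}(a)\le 2$ from the square and recovers the lower bound $\dim T_{[E_x]}\ge 2$ geometrically from Proposition \ref{surj}.
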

\begin{proof}
 Consider the commutative diagram of maps:
\[
\xymatrix{
S^2H^0(C, E_x) \ar[r]^-a & H^0(C, S^2E_x) \\
S^2H^0(S, \wt \E_x) \ar[u]^-b \ar[r]_-c & H^0(S, S^2 \wt \E_x) \ar[u]_-d
\
}
\]
Proposition \ref{prelPetri} i) implies that $d$ is injective, while   Proposition (\ref{s2}) tells that $c$ is surjective. Therefore 
\[
\mathrm{corank}(a) \le \mathrm{corank}(d) 
\]
From the sequence
$$
0 \to S^2 \wt \E_x(-C) \to S^2 \wt \E_x \to S^2E_x \to 0
$$
from  (\ref{h^1(S^"F)}),  and from  Proposition \ref{prelPetri} ii) and iii) 
we get that $\mathrm{corank}(a) \le 2$. Since coker$(a)^\perp = T_{[E_x]}M_C(2,K_C,s)$, we conclude  that 
$\mathrm{dim}[T_{[E_x]}M_C(2,K_C,s)]\le 2$.   But from Proposition (\ref{surj})  it follows that $T_{[E_x]}M_C(2,K_C,s)$ has dimension at least $2$ at $[E_x]$, and this proves the result. 
\end{proof}

\proof (of Proposition \ref{smoothm}). The fact that $\sigma: S\setminus J\to M_C(2,K_C, s)\setminus\{[E]\}$ is bijective tells us that $\dim M_C(2,K_C, s)=2$.
 Proposition  \ref{smoothm} follows then from Corollary \ref{dim-tan}.
\endproof

The surface $M_C(2,K_C, s)$ has an isolated singularity at the point $[E]$, and is otherwise smooth. In order to show that 
$M_C(2,K_C, s)$ is normal it suffices to prove the following Lemma
\begin{lem}\label{tane}
$$
\dim T_{[E]}M_C(2,K_C, s)=3
$$
\end{lem}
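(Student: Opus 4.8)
The computation for the smooth locus (Corollary \ref{dim-tan}) showed that the tangent space at a point $[E_x]$ with $x \notin J$ has dimension $2$, by bounding the corank of the Petri map $a: S^2H^0(C,E_x) \to H^0(C, S^2 E_x)$ from above by $2$. The point $[E]$ is the image of the whole contracted curve $J$, and here the relevant bundle is $E = \E|_C$ where $\E$ is the \emph{unique} non-split locally free extension (\ref{ext-ab}) of $\mcO_S(A)$ by $\mcO_S(B)$ on $S$. The natural strategy is to repeat the analysis of sub-section \ref{petri-map-s} verbatim, replacing $\wt\E_x$ by $\E$ throughout, and to show that the corank of the Petri map $a$ for $E$ equals $3$ rather than $2$. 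Concretely, I would first set up the analogue of the diagram in Corollary \ref{dim-tan},
\[
\xymatrix{
S^2H^0(C, E) \ar[r]^-a & H^0(C, S^2 E) \\
S^2H^0(S, \E) \ar[u]^-b \ar[r]_-c & H^0(S, S^2 \E) \ar[u]_-d
}
\]
and then redo the two halves: surjectivity of $c$ (the analogue of Proposition \ref{s2}) and injectivity of $d$ together with the bound on $\mathrm{corank}(a)$ (the analogue of Proposition \ref{prelPetri} i)).

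\textbf{Key steps in order.} First I would recompute the cohomology of $\E$ and its symmetric square, i.e. the analogues of Proposition \ref{prelPetri}. The crucial difference is that $\E$ now sits in the \emph{non-split} sequence (\ref{ext-ab}), $0 \to \mcO_S(B) \to \E \to \mcO_S(A) \to 0$, rather than in an ideal-sheaf extension, so the terms $\I_x$ and $\I_x^2$ disappear and are replaced by $\mcO_S(A)$ and $\mcO_S(2A)$. The restriction to $J$ is no longer $\mcO_J^2$: from the diagram (\ref{tors-free}) one sees $\E|_J$ is a nonsplit extension of $\mcO_J$ by $\mcO_J$, hence has $h^1(J, \E|_J) = 1$ (not $2$). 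Tracing this single numerical change through the sequence $0 \to S^2\E(-J) \to S^2\E \to S^2\E|_J \to 0$ I expect to find $h^1(S, S^2\E) = 2$ in place of the value $3$ obtained for $\wt\E_x$, and correspondingly $h^1(S, S^2\E(-C)) = 5$ still holds so that $d$ remains injective. Second, I would run the diagram argument exactly as in Corollary \ref{dim-tan}: surjectivity of $c$ (which goes through by the same argument as Proposition \ref{s2}, using that $F$ is surjective via Lemma 5.13 of \cite{ABS1} and $m$ is surjective) and injectivity of $d$ give $\mathrm{corank}(a) \le \mathrm{corank}(d)$, and the sequence $0 \to S^2\E(-C) \to S^2\E \to S^2 E \to 0$ together with (\ref{h^1(S^"F)}) converts the cohomology of $S^2\E$ into the corank bound, yielding $\mathrm{corank}(a) \le 3$, hence $\dim T_{[E]}M_C(2,K_C,s) \le 3$. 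Finally I would produce the matching lower bound $\ge 3$, exhibiting three independent tangent directions at $[E]$: two of them come from the two distinct branches of $\sigma(J)$ (the curve $J$ is contracted, so its image contributes deformations of $E$ inside the Brill--Noether locus), and the extra direction reflects precisely the jump in $h^1(J, \E|_J)$ caused by the singularity.

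\textbf{Main obstacle.} The genuinely delicate point is establishing the lower bound $\dim T_{[E]} \ge 3$ and thereby pinning the value at exactly $3$ rather than merely bounding it above. For the smooth points the lower bound came for free from the bijectivity of $\sigma$ on $S \setminus J$ (Proposition \ref{surj}), but at $[E]$ the fiber is the whole curve $J$, so a separate argument is required; the cleanest route is to compute the corank of $a$ exactly by verifying that the inequality $\mathrm{corank}(a) \le \mathrm{corank}(d)$ is an equality, i.e. that the cokernel of $d$ maps isomorphically to the cokernel of $a$. This in turn rests on showing that $b$ is surjective and that $c$ has full image inside the relevant subspace, so that the corank is entirely controlled by $d$, whose corank I expect to be $3$ by the cohomology count above. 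The careful bookkeeping of the non-split extension class of $\E|_J$, and its effect on $h^1(S, S^2\E)$, is where the single elliptic singularity of $\ov S$ manifests itself numerically, and keeping that bookkeeping honest is the heart of the proof.
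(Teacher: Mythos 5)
Your frame is the paper's: the commutative square (\ref{comm-e}) with $\E$ the unique non-split extension (\ref{ext-ab}), $E=\E|_C$, and the reduction of $\dim T_{[E]}$ to the corank of the Petri map $a$ via (\ref{tang-BN}); your observation that $\E|_J$ is the non-split (Atiyah) extension of $\mcO_J$ by $\mcO_J$ with $h^1(J,\E|_J)=1$ is also correct. But the cohomology counts that follow are wrong, and the error is not cosmetic: it localizes the corank in the wrong arrow of the diagram. Once the ideal sheaf is gone, the Euler characteristics change completely: from $0\to\E(-A)\to S^2\E(-C)\to\mcO_S(A-B)\to 0$ and Proposition \ref{calcoli} one gets $\chi(S^2\E(-C))=-1$, and Lemma \ref{s2e} of the paper gives $h^0(S^2\E(-C))=h^2(S^2\E(-C))=0$ and $h^1(S^2\E(-C))=h^1(S^2\E)=1$ --- not the values $5$ and $2$ you predict (the $5$ in Proposition \ref{prelPetri} came from $\chi(\I_x^2(A-B))=\chi(\mcO_S(A-B))-3$, a contribution with no analogue here; also, injectivity of $d$ only ever used $h^0(S^2\E(-C))=0$, not $h^1$). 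Consequently $d$, and likewise $b$, is an \emph{isomorphism}, so $\operatorname{corank}(a)=\operatorname{corank}(c)$ exactly, and the entire corank must come from the failure of surjectivity of $c$ --- precisely the step you assert "goes through by the same argument as Proposition \ref{s2}". It does not: for $x\in J$ diagram (\ref{tors-free}) shows that $H^0(S,\E)$ surjects onto $H^0(S,\mcO_S(A-J))$ (a section of $|A|$ through a point of $J$ contains $J$, since $A\cdot J=0$), so the right-hand vertical map becomes $F:S^2H^0(\mcO_S(A-J))\to H^0(\mcO_S(2A))$, which factors through $H^0(\mcO_S(2A-2J))$ and cannot be surjective. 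The paper computes its image by blowing up $p_{11}=p_{10}(s-1)$, identifying $H^0(\mcO_S(A-J))$ with the genus-$(s-1)$ du Val system $|A_1|$ on $S_1$, and invoking projective normality of the Brill--Noether--Petri curve $A_1$ (here $s-1\geq 5$): the image has dimension $h^0(\mcO_{S_1}(2A_1))=4s-6$, while $W=\operatorname{Im}(m)$ has codimension $1$ in $H^0(\mcO_S(2A))$ (whose dimension is $4s-2$), and surjectivity of $u$ onto $H^0(\E(B))$ (base-point-free pencil trick, using $h^1(\E(B))=1$) then yields $\dim\coker(c)=(4s-3)-(4s-6)=3$. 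None of this mechanism --- the drop from $H^0(A)$ to $H^0(A-J)$, the auxiliary blow-up at $p_{11}$, the count $4s-6$ versus $4s-2$ --- appears in your proposal, and it is the actual content of the lemma.

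Your concluding lower-bound step is also flawed, though fortunately unnecessary. There are no "two distinct branches of $\sigma(J)$": $\sigma$ contracts $J$ to the single point $[E]$, at which $\ov S$ has one normal elliptic singularity. And no separate lower bound is needed, because (\ref{tang-BN}) gives $\dim T_{[E]}M_C(2,K_C,s)=\operatorname{corank}(a)$ on the nose ($E$ is stable by Corollary 3.17-type stability, valid for all $x\in S$); once $b$ and $d$ are isomorphisms, $\operatorname{corank}(a)=\operatorname{corank}(c)=3$ is an equality, not a pair of bounds to be matched. Note finally that your stated conclusions are mutually inconsistent: if $c$ were surjective while $d$ is an isomorphism (which, by the correct count, it is), the diagram would force $\operatorname{corank}(a)=0$, contradicting even the trivial bound $\dim T_{[E]}\geq 2$ coming from $\dim M_C(2,K_C,s)=2$.
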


\begin{proof}
The proof  will follow the path that led to the proof of  Corollary \ref{dim-tan}. The ingredients will be essentially the same 
but the various computations will  be drastically different. We start with the commutative diagram:
\be\label{comm-e}
\xymatrix{
S^2H^0(C, E) \ar[r]^-a & H^0(C, S^2E) \\
S^2H^0(S,  \E) \ar[u]^-b \ar[r]_-c & H^0(S, S^2 \E) \ar[u]_-d
\
}
\ee
where 
$$
0\longrightarrow\mcO_S(B) \longrightarrow\E\longrightarrow \mcO_S(A)\longrightarrow0
$$
is the unique non split extension of  $\mcO_S(A)$  by $\mcO_S(B)$, and $E=\E_{|C}$. We must prove that

\be\label{crk3}
\operatorname{corank}a=3
\ee

\begin{lem}\label{s2e} $h^0(S^2\E(-C))=h^2(S^2\E(-C))=0$, $h^1((S^2\E(-C))=h^1((S^2\E)=1$
\end{lem}
\begin{proof} From the basic sequence $0\to \mcO_S(B)\to\E\to \mcO_S(A)\to 0$, we deduce the exact sequence
$0\to \mcO_S(B-A)\to\E(-A)\to \mcO_S\to 0$. The coboundary of this sequence is given by the extension class
which  is not zero. It follows that $h^0(\E(-A))=h^2(\E(-A))=0$. Now look at the exact sequence
$$
 0\longrightarrow \E(-A)\longrightarrow S^2\E(-C)\longrightarrow \mcO_S(A-B)\longrightarrow 0
$$
By what we just remarked, we get that $h^0(S^2\E(-C))=h^2(S^2\E(-C))=0$. On the other hand
 $\chi(S^2\E(-C))=-1$, so that $h^1((S^2\E(-C))=1$. The exact sequence
\be\label{restr-s2e}
 0\longrightarrow S^2\E(-C)\longrightarrow S^2\E\longrightarrow S^2E\longrightarrow 0
\ee
and the fact that $h^1(S^2E)=0$ (Corollary \ref{stab}), we deduce that $h^1(S^2\E)\leq 1$.
Since $\mcO_J(A)=\mcO_J(B)=\mcO_J$ we have an exact sequence

$$
 0\longrightarrow \mcO_J\longrightarrow \E_{|J}\longrightarrow \mcO_J\longrightarrow 0
$$
from which we deduce the exact sequence
$$
 0\longrightarrow \E_{|J} \longrightarrow S^2\E_{|J}\longrightarrow \mcO_{J}\longrightarrow 0
$$
It follows that $h^1(S^2\E_{|J})\neq0$. Since $h^2(S^2\E(-J))=h^0(S^2\E(-2C))=0$ we get that  $h^1(S^2\E)\neq0$.
This proves the lemma.

\end{proof}

\begin{lem}\label{bd}Both $b$ and $d$, in the above diagram are isomorphisms.
\end{lem}
\begin{proof}The statement about $b$ follows from (\ref{he1}). The statement about $d$ follows from the exact sequence (\ref{restr-s2e})
and Lemma \ref{s2e}.
\end{proof}

To prove  Lemma \ref{tane}, we are now reduced to prove
\be\label{crk3}
\operatorname{corank}c=3
\ee 
Consider diagram \ref{tors-free}. The first row is exact at the level of global sections. Also since $A\cdot J=0$, we must have 
$H^0(S, \I_x(A))=H^0(S, \mcO_S(A-J))$, whenever $x\in J$.
It follows that  the homomorphism $H^0(S, \E)\to  H^0(S, \mcO_S(A))$ factors through a surjective homomorphism onto $H^0(S, \mcO_S(A-J))$,
and we have the exact sequence 
$$
0\longrightarrow H^0(S, \mcO_S(B))\longrightarrow H^0(S, \E)\longrightarrow H^0(S, \mcO_S(A-J))\longrightarrow 0
$$
giving rise to two exact sequences
$$
0\longrightarrow U\longrightarrow S^2H^0(S, \E)\longrightarrow S^2H^0(S, \mcO_S(A-J))\longrightarrow 0
$$
$$
0\longrightarrow S^2H^0(S, \mcO_S(B)) \longrightarrow U\longrightarrow  H^0(S, \mcO_S(B))\otimes H^0(S, \mcO_S(A-J))\longrightarrow 0
$$
and a diagram
\be\label{diagrammone}
\xymatrix{
0\ar[r]&U\ar[r]\ar[d]_u&S^2H^0( \E)\ar[d]^c\ar[r]^{l\qquad}&S^2H^0(\mcO_S(A-J))\ar[d]^F\ar[r]&0\\
0\ar[r]&H^0(  \E(B))\ar[r]&H^0(S^2 \E)\ar[r]^m&H^0(\mcO_S(2A))\ar[r]^\alpha&H^1(  \E(B))\cong\CC\ar[r]&0\\
}
\ee
To explain the homorphism $\alpha$, observe that, a priori, we have an exact sequence
$$
H^0(\mcO_S(2A))\overset \alpha\longrightarrow H^1( \E(B))\longrightarrow H^1( S^2\E) \longrightarrow H^1( \mcO_S(2A) \longrightarrow H^2(  \E(B))
$$
We already know that $h^1( S^2\E)=h^1(  \mcO_S(2A)=1$, therefore it suffices to prove the following lemma.
\begin{lem}\label{s2eb}$h^1( \E(B))=1$, $h^2( \E(B))=0$
\end{lem}
\begin{proof}
Look at the exact sequence coming from the base-point-free-pencil-trick
$$
0\longrightarrow \E(-B)\longrightarrow H^0(S,  \mcO_S(B))\otimes\E\longrightarrow  \E(B) \longrightarrow 0
$$
The cohomology of $\E(-B)$ is readily computed via $0\to\mcO_S\to\E(-B)\to \mcO_S(A-B)\to 0$
and one gets $h^0(\E(-B))=h^2(\E(-B))=0$, $h^1(\E(-B))=1$. The lemma follows from this.
\end{proof}

We can now go back to diagram (\ref{diagrammone}).
\begin{lem}\label{s2eb}The homomorphism $u$ is surjective.
\end{lem}
\begin{proof} Since $h^1(S,  \mcO_S(2B))=2$, $h^1(S,  \E(B))=h^1(S,  \mcO_S(C))=1$, we have the following diagram with exact rows.
\be\label{diagrammone2}
\xymatrix{
0\ar[r]&S^2H^0(\mcO_S(B))\ar[r]\ar[d]^\cong&U\ar[d]^u\ar[r]^{w\qquad\qquad\qquad}&H^0(\mcO_S(B))\otimes H^0( \mcO_S(A-J))\ar[d]^\gamma\ar[r]&0\\
0\ar[r]&H^0(\mcO_S(2B))\ar[r]&H^0(  \E(B))\ar[r]^r&H^0(\mcO_S(C))\ar[r]^\beta&\CC^2\ar[r]&0\\
}
\ee
The kernel of $\gamma$ is $H^0(\mcO_S(A-B-J)=0$. The image of $r$ has dimension 
$$
2s=\dim H^0(\mcO_S(B)\otimes H^0(\mcO_S(A-J))\,.
$$
\end{proof}
\vskip 0.2 cm
Let us go back to diagram (\ref{diagrammone}). Let $W=\operatorname{Im}(m)$. We have
$$
\operatorname{Im}(F)\subset W\subset H^0(\mcO_S(2A))\,,\qquad \codim_{H^0(\mcO_S(2A))} W=1
$$
\be\label{corank-f}
\aligned
&\dim(\coker(c)=\dim\coker \{F: S^2H^0(\mcO_S(A-J))\to W\}\\
&=\dim\coker \{F: S^2H^0(\mcO_S(A-J))\to H^0(\mcO_S(2A))\}-1\\
\endaligned 
\ee
The homomorphism $F: S^2H^0(\mcO_S(A-J))\to H^0(\mcO_S(2A))$ factors through $H^0(\mcO_S(2A-2J))$.
Let us  study the linear system $|A-J|$. We have
$$
A-J=(s-1)J+ (s-1)E_{10}+E_9
$$

Since $(A-J)\cdot E_{10}=0$, we may as well work in $S'$. There we consider the divisor $A'=sJ'+E_9$, so that
\be\label{a-j}
A'-J'=(s-1)J'+E_9
\ee
whose proper transform under $S\to S'$ is $A-J$. Let $\tau: S_1\to S'$ be the blow up of  $S'$ at $p_{11}=p_{10}(s-1)$.
Let $J_1$ be the proper transform of $J'$ and $E_{11}$ the preimage of $p_{11}$ under $\tau$.
The proper transform of (\ref{a-j}) in $S_1$ is 
$$
A_1=(s-1)J_1+ (s-2)E_{11}+E_9=\tau^*(A'-J')-E_{11}
$$
By construction
$$
H^0(\mcO_{S_1}(A_1))=H^0(\mcO_{S}(A-J))=H^0(\mcO_{S'}(A'-J'))
$$
We see that $A_1$ is a Brill-Noether-Petri du Val curve of genus $s-1$; since $s-1\geq5$,
its canonical image is projectively normal.
As a consequence
the homomorphism  $\lambda$
$$
S^2H^0(\mcO_{S_1}(A_1))\overset\lambda\longrightarrow H^0(\mcO_{S_1}(2A_1))
$$
is surjective.
On the other hand the sections of $H^0(\mcO_{S}(A-J))$ vanish on $p_{11}$ so that we may identify $H^0(\mcO_{S_1}(2A_1))$ with $\operatorname{Im}(F)$.
Since $A_1$ is a du Val curve on $S_1$ we have $h^0(\mcO_{S_1}(2A_1))=4s-6$ while $h^0(\mcO_{S}(2A))=4s-2$ (we are using Proposition \ref{A} for both $A$ and $A_1$).
From (\ref{corank-f}), it follows that $\dim(\coker (c))=3$. This finishes the proof of Lemma \ref{tane}.

\end{proof}
\begin{proof} (of Proposition \ref{norm}, and Corollary \ref{iso}) This follows from the very well known fact that an isolated surface singularity 
with embedding dimension equal to 3 is normal.
\end{proof}

\section{On the corank of the Gauss-Wahl map of a general du Val curve.}

We will prove our main Theorem \ref{main-theorem} by showing, that in fact the corank-one  property holds for du Val curves which are hyperplane sections of Halphen surfaces
of index (s+1).

We recall that surfaces with canonical hyperplane sections which are not cones, or smooth $K3$ surfaces, are classified by Epema (see e.g.\cite{ABS2}, Section 9).

 If $\ov S$ is such a surface, we have a diagram

\be\label{E:fakediag}
\xymatrix{
S\ar[d]_p\ar[r]^--q&\ov S\subset\PP^g\\
S_0
}
\ee

where $q$ is the minimal resolution and $S_0$ is a minimal model of $S$. The minimal model $S_0$ can be either a ruled surface over a curve $\Gamma$ or $\PP^2$.
Notice that, since $S_0$ is ruled, $S$ has many minimal models which are connected by birational transformations whose graph is dominated by $S$.
For instance, if $S_0$ is a rational surface, we can always assume that $S_0=\PP^2$ (see for instance Section 11.3 of \cite{ABS2}).
Suppose $C \subset S$ is a du Val curve, so that $S_0=\PP^2$, and 
$$
C=3g\ell-gE_1-\cdots-gE_8-(g-1)E_9-E_{10}
$$
we can change the plane model $C_0$ by any quadratic transformation centered at any three points among $p_1, \ldots, p_{10}$. We will  call this transformed curve a birational du Val curve.

Before proving the main Theorem of this section, we recall Theorem 6.1 in \cite {ABS1}:

\begin{theorem}\label{Tmain}
Let $(S,C)$ be a general polarised $K3$ surface with $\Pic(S) \cong \ZZ\cdot[C]$ and $g=2s+1$.
Let $v= (2, [C],s)$ so that $M_v(S)$ is a smooth polarised $K3$ surface. 
There is a unique, generically smooth, 2-dimensional irreducible component  $V_C(2,K_C,s)$ of $M_C(2,K_C,s)$, containing the Voisin bundles $E_L$, with $L\in W^1_{s+2}(C)$,
such that 
  $\sigma$ induces   an isomorphism of $M_v(S)$ onto $V_C(2,K_C,s)_{red}$. In particular   $V_C(2,K_C,s)_{red}$ is a  K3 surface.
\end{theorem}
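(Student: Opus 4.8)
The plan is to realise both sides as moduli objects and compare them through the restriction map $\sigma\colon\wt\E\mapsto\wt\E|_C$, exactly as is done for Halphen surfaces in the body of this paper.

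\textbf{Step 1 (the K3 side).} Since $\Pic(S)\cong\ZZ\cdot[C]$ is generated by $[C]$ and $g=2s+1$, the Mukai vector $v=(2,[C],s)$ is primitive and isotropic: $\langle v,v\rangle=[C]^2-4s=(2g-2)-4s=0$. By Mukai's theory, taking $C$ as ($v$-generic, as the ample cone is a single ray) polarisation, every semistable sheaf with Mukai vector $v$ is stable and locally free, since a destabilising sub-line bundle would be $\mcO_S(k[C])$, impossible because $[C]$ generates $\Pic(S)$. Hence $M_v(S)$ is a smooth projective K3 surface of dimension $\langle v,v\rangle+2=2$. It is non-empty because $W^1_{s+2}(C)\neq\emptyset$ (the Brill--Noether number is $g-2(g-s-1)=1$), and each $L\in W^1_{s+2}(C)$ produces a Lazarsfeld--Mukai bundle $\E_{L,S}\in M_v(S)$.

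\textbf{Step 2 (the restriction morphism).} I would define $\sigma\colon M_v(S)\to M_C(2,K_C,s)$ by $\wt\E\mapsto\wt\E|_C$ and check well-definedness. Adjunction on the K3 gives $\det(\wt\E|_C)=\mcO_S(C)|_C=K_C$. Since $\det\wt\E=\mcO_S(C)$ one has $\wt\E^\vee\cong\wt\E(-C)$; stability forces $h^0(\wt\E^\vee)=0$, whence by Serre duality $h^2(\wt\E)=0$ and $h^0(S,\wt\E)\geq\chi(\wt\E)=s+2$. The sequence $0\to\wt\E(-C)\to\wt\E\to\wt\E|_C\to0$ with $h^0(\wt\E(-C))=h^0(\wt\E^\vee)=0$ then yields $h^0(C,\wt\E|_C)\geq s+2$, so the image lies in the Brill--Noether locus; stability of $\wt\E|_C$ follows from a restriction/Clifford-index argument of the type of Corollary \ref{stab}. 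Since $h^1(\wt\E(-C))=h^1(\wt\E^\vee)=h^1(\wt\E)=0$ (for these sheaves) the restriction $H^0(S,\wt\E)\to H^0(C,\wt\E|_C)$ is an isomorphism, and $\wt\E$ is recovered from $\wt\E|_C$ together with this space of sections, giving injectivity of $\sigma$.

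\textbf{Step 3 (tangent spaces, the crux).} The heart is matching tangent spaces. By Mukai, $M_v(S)$ is smooth with $T_{[\wt\E]}M_v(S)=\Ext^1_S(\wt\E,\wt\E)_0$ of dimension $\langle v,v\rangle+2=2$, whereas $T_{[E]}M_C(2,K_C,s)=(\im\mu_E)^\perp$ for the Petri map $\mu_E\colon S^2H^0(C,E)\to H^0(C,S^2E)$. I would prove $\operatorname{corank}(\mu_E)=2$ via the commutative square
\be
\xymatrix{
S^2H^0(C,E)\ar[r]^-{\mu_E}&H^0(C,S^2E)\\
S^2H^0(S,\wt\E)\ar[u]\ar[r]^-{\mu_S}&H^0(S,S^2\wt\E)\ar[u]
}
\ee
where the left vertical is an isomorphism by Step 2, the right vertical is injective with cokernel read off from $0\to S^2\wt\E(-C)\to S^2\wt\E\to S^2E\to0$, and $\mu_S$ is surjective. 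Computing $h^i(S^2\wt\E(-C))$, the value $h^1(S^2\wt\E)$, and the surjectivity of $\mu_S$ then forces $\operatorname{corank}(\mu_E)=2$. This is precisely the K3 analogue of Propositions \ref{prelPetri} and \ref{s2} and Corollary \ref{dim-tan}, and I expect it to be the main obstacle, since it demands tight control of the cohomology of $S^2\wt\E$ and of the multiplication map on $S$.

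\textbf{Step 4 (conclusion).} The image of the injective $\sigma$ is an irreducible $2$-dimensional locus whose closure is a component $V_C(2,K_C,s)$; it contains every Voisin bundle $E_L$ with $L\in W^1_{s+2}(C)$, because by Remark \ref{voisin-bund} one has $E_L=\E_{L,S}|_C$ with $\E_{L,S}\in M_v(S)$. Step 3 shows that $\sigma$ is unramified onto the reduced locus and that $V_C(2,K_C,s)_{red}$ is smooth of dimension $2$; a bijective unramified morphism from a smooth projective surface is an isomorphism onto its reduced image, so $\sigma$ induces $M_v(S)\cong V_C(2,K_C,s)_{red}$, which is therefore a K3 surface. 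Uniqueness follows since any $2$-dimensional component through the $E_L$ has, at a general such point, tangent space again of dimension $2$ and hence coincides with $\im\sigma$ there.
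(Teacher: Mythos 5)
First, a point of order: the paper you were asked about does not actually prove Theorem \ref{Tmain} --- it is recalled verbatim from \cite{ABS1} (Theorem 6.1), and the present paper only carries out the \emph{analogue} of that argument for Halphen surfaces in Sections 2--3. Your proposal correctly reconstructs the architecture of that argument (restriction morphism $\sigma$, injectivity, lifting the Petri map to the surface via the commutative square, corank count, then bijective-plus-unramified), so at the level of strategy you are on the right road, the same one the paper travels in Propositions \ref{inj}, \ref{surj}, \ref{prelPetri}, \ref{s2} and Corollary \ref{dim-tan}.

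However, there are genuine gaps at exactly the point you flag as the crux. In Step 3 you propose to prove surjectivity of $\mu_S$ and compute $h^i(S^2\wt\E(-C))$, $h^1(S^2\wt\E)$ ``as the K3 analogue of Propositions \ref{prelPetri} and \ref{s2}'', but those propositions rest entirely on structure that is absent when $\Pic(S)\cong\ZZ\cdot[C]$: the decomposition $C=A+B$ with the elliptic pencil $|B|$, the presentation $0\to\mcO_S(B)\to\wt\E_x\to\I_x(A)\to 0$, and repeated use of the base-point-free pencil trick for $|B|$. On a Picard-rank-one K3 a stable $\wt\E$ with $v(\wt\E)=(2,[C],s)$ admits \emph{no} filtration by line bundles on $S$ (any sub-line bundle is $\mcO_S(kC)$, excluded by stability), so the cohomological bookkeeping must instead run through the Lazarsfeld--Mukai presentation $0\to\E_L^\vee\to H^0(C,L)\otimes\mcO_S\to L\to 0$ relative to a $g^1_{s+2}$ on $C$, i.e.\ through exact sequences along $C$ --- a genuinely different computation, which is the actual content of \cite{ABS1} and is not supplied, nor correctly pointed at, by your sketch. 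Three further assertions need proof rather than assertion: (i) $h^1(S,\wt\E)=0$ for \emph{every} $[\wt\E]\in M_v(S)$ (needed both for the left vertical isomorphism and for your injectivity claim) does not follow from stability alone; it has to be deduced, e.g.\ from stability of $\wt\E|_C$ together with a bound of Lemma \ref{stab} type forcing $h^0=s+2$, as in the logic of Proposition \ref{surj}; (ii) ``$\wt\E$ is recovered from $\wt\E|_C$ together with this space of sections'' is a gesture, not an argument --- the paper's Halphen analogue needs the explicit computation $\dim\Hom(\xi,E_x)=1$ of Proposition \ref{inj}; (iii) local freeness of all members of $M_v(S)$ does not follow from the non-existence of a destabilising sub-line bundle; one should note that $v(\wt\E^{\vee\vee})=(2,[C],s+\ell)$ has square $-4\ell<-2$ for $\ell>0$, impossible for a stable sheaf. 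Finally, your Step 4 upgrades a \emph{generic} corank-$2$ computation to ``$\sigma$ is unramified onto the reduced locus'', but the isomorphism onto $V_C(2,K_C,s)_{red}$ requires tangent-space or normality control at the special points as well; this is precisely why, in the Halphen case, the paper cannot stop at Corollary \ref{dim-tan} and must additionally prove Lemma \ref{tane} and Proposition \ref{norm} before concluding Corollary \ref{iso}.
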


\begin{theorem}\label{Main-s+1} Let $C\subset\PP^{g-1}$ be a du Val curve of genus $g=2s+1>11$ which is the hyperplane section
of a polarised Halphen surface of index $ (s+1)$. Then the corank of the Gauss-Wahl map for $C$ is equal to $1$.
\end{theorem}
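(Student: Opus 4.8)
The plan is to prove $\operatorname{corank}\nu_C\ge 1$ and $\operatorname{corank}\nu_C\le 1$ separately, the first being routine and the second carrying all the weight. For the lower bound I would use the easy half of Wahl's theory: $C$ is a smooth hyperplane section of the surface $\ov S\subset\PP^g$ carrying canonical sections, and the mere existence of such a surface with $C$ as a canonical curve produces, via \cite{Wahl-sections}, a non-zero class in $\operatorname{coker}\nu_C$. Note that I would \emph{not} invoke Theorem \ref{Main1} here, since $C$ is not Brill--Noether--Petri: it carries the pencil $\xi=\,g^1_{s+1}$, so $\operatorname{Cliff}(C)=s-1<s$. The entire problem is thus to show that this one class spans the whole cokernel.

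For the reverse inequality I would argue by contradiction and reduce it to the assertion that \emph{no smooth $K3$ surface contains $C$}. Suppose $\operatorname{corank}\nu_C\ge 2$. Since $g=2s+1\ge 13$ and $\operatorname{Cliff}(C)=s-1\ge 3$ (as $s\ge 6$), Theorem~7.1 of \cite{Wahl-square} together with Theorem~3 of \cite{ABS2} integrates each non-zero $v\in\operatorname{coker}\nu_C$ to a surface $X_v\subset\PP^g$ with isolated singularities and $C$ as canonical hyperplane section. A cokernel of dimension at least two then yields a positive-dimensional family of such surfaces; by Epema's classification the members that are cones or carry a genuine elliptic singularity form a proper closed subset, so a general member of the family should be a smooth $K3$ surface through $C$. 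Producing such a surface is exactly what the next step forbids, which would give $\operatorname{corank}\nu_C\le 1$.

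It therefore remains to exclude a smooth $K3$ surface $X\supset C$, and here I would exploit the reconstruction $\ov S\cong M_C(2,K_C,s)$ of Corollary \ref{iso}, recalling that by Proposition \ref{surj} the target is the image of the irreducible surface $S$ and hence is itself irreducible. Setting $v=(2,[C],s)$ one has $[C]^2=2g-2=4s$, so $\langle v,v\rangle=0$ and $M_v(X)$ is a two-dimensional moduli space of sheaves on $X$. It is non-empty because it contains the Voisin (Lazarsfeld--Mukai) bundles $E_{L,v'}$ of Remark \ref{voisin-bund}, which are stable by Corollary \ref{stab2} and have $s+2$ sections; restriction to $C$ therefore maps $M_v(X)$ into $M_C(2,K_C,s)\cong\ov S$. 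Following Theorem \ref{Tmain}, this map should identify $M_v(X)$ with a two-dimensional component of $M_C(2,K_C,s)$, which by irreducibility is all of $\ov S$. Now Mukai's theory forces $M_v(X)$ to have at most isolated rational singularities, whereas $\phi_{|C|}$ contracts the elliptic curve $J$ to an elliptic (hence non-rational) singularity of $\ov S$; this incompatibility is the desired contradiction.

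The delicate point, and the main obstacle, is that Theorem \ref{Tmain} is stated for a \emph{general} polarised $K3$ and a general curve, while here $X$ is arbitrary and $C$ is special, so the clean identification of $M_v(X)$ with a component of $M_C(2,K_C,s)$ is not directly available. To make it rigorous I would place $X$ in a degenerating family $\{X_t\}$ of $K3$ surfaces, using that $\ov S$ is a limit of smooth $K3$ surfaces by the Proposition of \cite{ABS2}, and study the relative moduli spaces $M_v(X_t)$. These form a degeneration whose central fibre is, on the one hand, forced to be $\ov S$ and, on the other, of Kulikov Type~I (a $K3$ with rational double points), the stable sheaves in the limit being guaranteed by the Voisin bundles. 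Kulikov's theorem rules out a Type~I central fibre carrying an elliptic singularity, yielding the contradiction in a form insensitive to the non-generality of $C$. Once no smooth $K3$ contains $C$, the reduction of the second paragraph gives $\operatorname{corank}\nu_C\le 1$, hence equality; the statement for a general du Val curve then follows by upper semicontinuity of the corank along a specialisation to the index-$(s+1)$ case.
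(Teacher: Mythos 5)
Your exclusion of a smooth K3 surface containing $C$ is essentially the paper's Theorem \ref{Main-s+1-smooth}, modulo one misdescription of the degeneration: the family is \emph{not} obtained by letting the $X_t$ tend to $\ov S$ (that would make the central fibre of the relative moduli space the five-dimensional $M_v(S)$, not a surface); instead one keeps $X_{t_0}=X$, moves to general polarised K3s $X_t$ with $\Pic(X_t)=\ZZ\cdot[C_t]$, and compares \emph{two} relative families --- the moduli spaces $\M_t=M_{v_t}(X_t)$ and the Brill--Noether loci $\V_t=V_{C_t}(2,K_{C_t},s)$ --- which agree for general $t$ by Theorem \ref{Tmain} and hence over the punctured disc by the Burns--Rapoport Principal Lemma, yet are filled in at $t_0$ on one side by $M_v(X)$ (nonempty with a smooth point thanks to the Voisin bundles, a K3 with isolated $A_1$ singularities since $v^2=0$, so of Kulikov Type I) and on the other by $V_C=M_C(2,K_C,s)\cong\ov S$ via Corollary \ref{iso} (whose semistable model is Type II: two rational surfaces glued along an elliptic curve). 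Your sketch gestures at this but never produces the two incompatible fillings of a single punctured family, which is where the monodromy contradiction actually lives; still, the core idea matches the paper.

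The genuine gap is your reduction ``$\operatorname{corank}\nu_C\geq 2$ implies a smooth K3 through $C$''. You assert that among the surfaces $X_v\subset\PP^g$ integrating the classes $v\in\operatorname{coker}\nu_C$, the cones and the surfaces with an elliptic singularity form a proper closed subset, so a general member is smooth. Nothing supports this: smoothness is an open condition on the family but may hold nowhere, and here the opposite is true --- by Theorem \ref{Main-s+1-smooth} itself, \emph{every} member of the family over a $2$-plane $U\subset\operatorname{coker}\nu_C$ is singular. The paper's proof of Theorem \ref{Main-s+1} does its real work precisely at this point: the general fibre $X_t$ has a single elliptic singularity (since it degenerates to $\ov S$); blowing up the section of singular points yields a family of rational surfaces, hence a family of plane models of $C$ (birational du Val curves, with constant multiplicities because the singular points lie on the anticanonical cubic $J_t$ with $J_t^2=-1$); since $C$ is not Brill--Noether general, Theorem 3.2 of \cite{ABFS} forces each $X_t$ to be Halphen of some index $m\leq 2s+1$, and degeneration to the index-$(s+1)$ central fibre forces $m$ to be a multiple of $s+1$, hence $m=s+1$; then Corollary \ref{iso} applies to every fibre, so all $X_t\cong M_C(2,K_C,s)\cong\ov S$ as polarised surfaces, and the discreteness of the automorphism group of Halphen surfaces makes them all projectively equivalent --- contradicting that they correspond to distinct points of $\PP(U)$. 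Without a replacement for this second half, your argument only shows that $C$ lies on no smooth K3, not that $\operatorname{corank}\nu_C\leq 1$.
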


 We will start by proving  an intermediate result which, in fact, catches  the geometric significance of
 Theorem \ref{Main-s+1}.

\begin{theorem}\label{Main-s+1-smooth} Let $C\subset\PP^{g-1}$ be a du Val curve of genus $g=2s+1>11$ which is the hyperplane section
of a polarised Halphen surface of index $ (s+1)$. Then $C$ is not a hyperplane section of a smooth K3 surface in $\PP^g$.
\end{theorem}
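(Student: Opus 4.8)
The plan is to argue by contradiction, assuming that $C$ is a smooth hyperplane section of a smooth K3 surface $X\subset\PP^g$, and to derive a contradiction with the fact that $\ov S$ carries an isolated elliptic (hence non-rational) singularity. The bridge between the two surfaces is the intrinsic Brill–Noether locus $M_C(2,K_C,s)$, which we have already identified with $\ov S$ in Corollary \ref{iso}.

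First I would produce the relevant rank-two bundles on $X$. Since $C$ lies on $\ov S$, a surface with canonical sections, the Gauss–Wahl map of $C$ is non-surjective; combined with the existence of a base-point-free $g^1_{s+2}$ on $C$ (Lemma \ref{g-1-s+1}) and the Voisin construction of Remark \ref{voisin-bund}, each $L\in W^1_{s+2}(C)$ yields a stable rank-two bundle. When $C\subset X$ this Voisin bundle is the restriction $\E_{L,X}|_C$ of the Lazarsfeld–Mukai bundle $\E_{L,X}$ on $X$, which is stable with Mukai vector $v=(2,[C],s)$. A short Riemann–Roch check gives $\langle v,v\rangle=[C]^2-4s=(2g-2)-4s=0$, so the moduli space $M_v(X)$ is two-dimensional; since it contains the stable bundles $\E_{L,X}$ it is non-empty. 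By Mukai's theory of sheaves on K3 surfaces, $M_v(X)$ is a holomorphically symplectic surface with at most isolated rational singularities, so its minimal model is a K3 surface; in particular $M_v(X)$ is not birational to a rational surface.

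Next I would use restriction to $C$ to compare the two pictures. Sending $\mathcal F\mapsto \mathcal F|_C$ defines a morphism $\sigma_X\colon M_v(X)\to M_C(2,K_C,s)$, whose image lands in the component containing the Voisin bundles; since $M_C(2,K_C,s)\cong\ov S$ is irreducible and two-dimensional, this component is all of $\ov S$. The goal is to see that $\sigma_X$ is birational onto $\ov S$: by Theorem \ref{Tmain} this holds when $X$ is a general K3, and the Lemmas of \cite{ABS1} that make the restriction generically injective apply verbatim. Once $\sigma_X$ is birational the contradiction is geometric: the elliptic singularity of $\ov S$ is obtained by contracting the elliptic curve $J$, whereas a birational morphism from a (minimally resolved) K3 surface can only contract ADE-configurations of rational $(-2)$-curves and therefore produces rational double points, never an isolated elliptic singularity. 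Equivalently, $\ov S$ would be forced to share the minimal model of $M_v(X)$, namely a K3 surface, contradicting the fact that $\ov S$ is birational to the rational surface $S$.

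To handle a possibly special $X$ I would phrase this through a degeneration, which is where the main difficulty lies. Using that $\ov S$ is a limit of smooth K3 surfaces, I would organize the moduli spaces $M_v$ of the nearby smooth K3 fibres — each a genuine K3 by Theorem \ref{Tmain} via the restriction isomorphism — into a one-parameter degeneration whose central fibre is $\ov S=M_C(2,K_C,s)$. Kulikov's theorem then applies: the central fibre of a type I degeneration of K3 surfaces has at worst rational double points, so it cannot carry the isolated elliptic singularity of $\ov S$, giving the desired contradiction. The delicate point — and the reason the argument requires care — is that on the singular Halphen surface $S$ the moduli space $M_v(S)$ jumps to dimension five, so the naive relative moduli space over the degeneration is not flat; one must instead work with the two-dimensional restriction locus, i.e. with the Voisin-bundle component $V_C(2,K_C,s)_{red}$, which is intrinsic to $C$, in order to match $M_v(X)$ and $\ov S$ as central fibres of one and the same K3 degeneration.
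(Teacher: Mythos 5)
Your overall strategy coincides with the paper's: exhibit Voisin bundles as stable points of $M_v(X)$ with $v=(2,[C],s)$, $\langle v,v\rangle=0$ (using Lemma \ref{g-1-s+1}, Remark \ref{voisin-bund} and Corollary \ref{stab2}), conclude that $M_v(X)$ is essentially a K3, and play this off against $\ov S\cong M_C(2,K_C,s)$ (Corollary \ref{iso}) through a degeneration and Kulikov's theorem. But two of your steps have genuine gaps. First, your ``direct'' argument via a birational $\sigma_X\colon M_v(X)\to \ov S$ cannot be made to work as stated: the hypothetical $X$ is \emph{never} a general K3 in the sense of Theorem \ref{Tmain}, because $C$ carries the pencil $\xi=g^1_{s+1}$ with negative Brill--Noether number, so $\Pic(X)\neq\ZZ\cdot[C]$ and has rank at least $2$. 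The lemmas of \cite{ABS1} that make restriction to $C$ well defined, stable, and generically injective use the cyclic Picard hypothesis in an essential way; they do not ``apply verbatim'' to such an $X$, and the paper deliberately never attempts this route. Since every $X$ containing $C$ is ``special'' in your dichotomy, the entire weight of your proof falls on the degeneration paragraph.

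Second, that degeneration is anchored at the wrong central fibre. You propagate moduli spaces over a smoothing of $\ov S$; but in that family $X$ appears nowhere, the central surface is $\ov S$ itself (over which $M_v$ jumps to dimension five, as you note), and the family of Voisin components with central fibre $\ov S$ is by itself perfectly consistent --- $\ov S$ \emph{is} a limit of K3 surfaces, realized by a type II degeneration. To get a contradiction one needs the same punctured family to admit a second, type I, filling, and for that the paper degenerates $(X,C)$ itself: it chooses a family $(X_t,C_t)$ of polarized K3s with $X_{t_0}=X$, $C_{t_0}=C$, builds the relative moduli space $\M$ (Huybrechts--Lehn) with central fibre $M_v(X)$ and the relative Brill--Noether family $\V$ with central fibre $\ov S$, identifies them fibrewise over the dense set where $\Pic(X_t)=\ZZ[C_t]$ via Theorem \ref{Tmain}, and --- crucially --- upgrades this to an isomorphism of families over the punctured disc using the Burns--Rapoport Principal Lemma (\cite{Beauville-Torelli-K3}), without which no monodromy comparison is possible. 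Your sentence about ``matching $M_v(X)$ and $\ov S$ as central fibres of one and the same K3 degeneration'' names the right goal but supplies neither the family through $X$ nor the Burns--Rapoport step. Finally, your appeal to ``Mukai's theory'' for the rational singularities of $M_v(X)$ is too quick: since the polarization need not be $v$-generic, strictly semistable points can occur, and the paper must analyze them explicitly (polystable $F_1\oplus F_2$ with $v_i^2=-2$, $v_1\cdot v_2=2$, quadric cone $z^2=xy$, via \cite{Arbarello-Sacca12}) to conclude that the singularities are of type $A_1$ and hence that the type I side has a smooth K3 semistable model.
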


\begin{proof} Suppose, by contradiction, that $C$ is a hyperplane section of a smooth $K3$ surface $X\subset \PP^g$.
We can choose   families
$$
\xymatrix{\qquad\qquad\X\subset \PP^g\times T\ar[d]^h\\(T,t_0)}
\,\qquad
\xymatrix{\qquad\qquad\C\subset \PP^{g-1}\times T\ar[d]^k\\(T,t_0)}
$$
parametrised by a smooth pointed curve $(T,t_0)$ having the following properties.
A fiber $X_t$ of $h$ is $K3$ surface in $\PP^g$. A fibre  $C_t$ of  $k$ is a hyperplane section of $X_t$.
Also $X_{t_0}=X$, and $C_{t_0}=C$. 
For $t$ belonging to a dense open set $A\subset T$,  
the Picard group of $X_t$ is of rank  $1$ and  generated by $[C_t]$. As in Theorem \ref{Tmain},
if $v_t= (2, [C_t],s)$, there is an isomorphism 
\be\label{iso-m-v}
M_{v_t}(X_t) \cong  V_C(2,K_{C_t},s)_{red}\,,\qquad t\in A.
\ee
Moreover,  for $t\in A$, the moduli space $M_{v_t}(X_t)$ is a smooth $K3$ surface. Following  \cite{Huybrechts-Lehn}, Theorem 4.3.7,
we can construct relative families of moduli spaces

$$
\sigma: \M \longrightarrow (T,t_0)\,,\qquad \tau: \V \longrightarrow (T,t_0)\,,
$$
where 
$$
\M_t=M_{v_t}(X_t)\,,\qquad \V_t=V(2, K_{C_t},s)
$$
Since for  $t\in A$,  the fiber $\M_t$ is a smooth $K3$ surface, we may find
an analytic neighbourhood $\Delta\subset T$ of $t_0$ such  that, for $t\neq t _0$ the fiber $ \M_t$ is a smooth $K3$ surface. By virtue of (\ref{iso-m-v}), we may also assume that for $t\in\Delta\setminus\{t _0\}$ the fiber $ \V_t$ is a smooth $K3$ surface.
By Corollary \ref{iso}, we know that $M(2, K_{C},s)=\ov S$, in particular
$$
V_{C_{t_0}}(2,K_{C_{t_0}},s)_{red}=V_{C_{t_0}}(2,K_{C_{t_0}},s)=M(2, K_{C},s)=\ov S
$$
and as a consequence $V_{C_{t}}(2,K_{C_{t}},s)_{red}=V_{C_{t}}(2,K_{C_{t}},s)$, for  $t\in \Delta$.
\begin{claim}
 Set $v=v_{t_0}=(2, [C],s)$. Then $M_{v_{t_0}}(X_{t_0})=M_v(X)$ is a K3 surface with isolated singularities of type $A_1$.
\end{claim}
{\it Proof of the Claim}.
Let us first produce a smooth point in $M_v(X)$. The K3 surface $X$
contains the curve $C$, on which, by Lemma \ref{g-1-s+1}, there exists a base-point-free $g^1_{s+2}=|L|$.
Starting from $X$, and recalling Remark \ref{voisin-bund}, we can consider the rank two vector bundles 
$\E_{L,X}$, and $E_{L,X}$ on $X$ and $C$ respectively. By Lemma \ref{stab2}, $E_{L,X}$ is stable.
Repeating word by word the argument at the end of  the proof of Proposition 5.5 in \cite {ABS1}, we get that
$\E_{L,X}$ is stable as well and represents a smooth point of $M_v(X)$. Since  $v^2=0$, we can conclude that
$M_v(X)$ is a surface with isolated singularities. A singular point $p$ of $M_v(X)$ corresponds to a polistable sheaf
$F_1\oplus F_2$ with $F_1\not\cong F_2$, as $v$ is primitive. Set $v_i=v(F_i)$,  we necessarily have $v_i^2=-2$
and $v_1\cdot v_2=2$. Following \cite{Arbarello-Sacca12} the quadratic cone at $p$ to $M_v(X)$
is given by an equation of type $z^2=xy$.
This proves the Claim.
\vskip 0.2 cm
Over $\Delta\setminus \{t_0\}$ we then have two families of smooth $K3$ surfaces which, by  Theorem (\ref{Tmain}), are fiberwise isomorphic
over the dense subset $A\cap(\Delta\setminus \{t_0\})$. By the ``Principal Lemma'' of Burns-Rapoport  (see e.g.
\cite{Beauville-Torelli-K3} Expos\'e IX) the two families are isomorphic over $\Delta\setminus \{t_0\}$. But now, 
$\sigma$ is a degenerating family of  $K3$ surfaces whose semistable model 
has a smooth K3 surface in the central fiber, while $\tau$ is a degenerating family of  $K3$ surfaces  whose semistable model 
has, as central fiber, the union of two smooth rational surfaces meeting on an elliptic curve.
By Kulikov's theorem \cite{Kulikov} (see also \cite{Persson-Pinkham},  \cite{Morrison-C-S-exact}) the two families have different monodromy
and can not be isomorphic over $\Delta\setminus \{t_0\}$. A contradiction.
\end{proof}

\vskip 0.2 cm
\begin{proof} (of Theorem \ref{Main-s+1}). Let $U$ be a $2$-dimensional subspace of the cokernel of the Gauss-Wahl map $\nu$ containing the point  which correspond to $\ov S$. From Theorems  3 in \cite{ABS2}, and 7.1 in \cite{Wahl-square}
we have a flat family 
$$
f:  \X \longrightarrow (T, t_0)\,,\qquad X_{t}=f^{-1}(t)\,,\qquad X_{t_0}=\ov S
$$
whose fibers are surfaces having $C$ as hyperplane section and where $T$ is a finite cover of $\PP (U)$.  In view of Theorem \ref{Main-s+1},
we may assume that, for general $t\in T$, the surface  $X_{t}$ is a singular surface with canonical hyperplane sections. Since it degenerates to $\ov S$, the surface  $ X_{t}$ has only one elliptic singularity.
The family $f$ has a section given by the singular point of each fiber. Since the singularity of the central fiber is resolved by the blow-up of the elliptic
singularity, the blow-up of such a section is a family
$$
f': \X' \longrightarrow T
$$
whose general fiber is a smooth surface degenerating to $S$. Up to removing finitely many points from $T$ the exceptional divisor $\J$ of such a blow-up
is a Cartier divisor; then we have $\J^2 \cdot f'^{-1}(t)=-1$.
Since $S$ is rational, all the fibers of $f'$ are rational and  we  can consider them as blow-ups of the plane.
Such a datum  defines a family of $g^2_d$'s on $C$ which are birationally ample.
Since  $\J^2 \cdot f'^{-1}(t)=-1$, this defines a family of plane curves of degree $d$ which have the same geometric genus and
at most $10$ singular points: in fact from Epema's classification the singular locus of the plane model of $C$ for $t\in T$ lies on the plane cubic
$J_{t}=\J_{|f'^{-1}(t)}$ and $J_{t}^2=-1$. This means that multiplicities of the singular points must be constant on the family.
Let us write the central fiber of such a family as
$$
C=d\ell'-m_1E'_1-\cdots-m_{10}E'_{10}
$$ 

By hypothesis, there exists a birational transformation $\xymatrix{\PP^2 \ar@{<-->}[r]^\phi& \PP^2}$ whose graph is dominated by $\ov S$
which transforms the central fiber into a curve 
$$
C=3g\ell-gE_1-\cdots-gE_8-(g-1)E_9-E_{10}
$$ 
Such a birational transformation is induced by a  birationally ample linear system of plane curves 
$$
F=r\ell'-n_1E'_1-\cdots-n_{10}E'_{10}\,.
$$
For $t\in T$ consider the linear system 
$$
F_t=r\ell'_t-n_1E'_{1,t}-\cdots-n_{10}E'_{10, t}\,.
$$
We have $F_t^2=1$ and $|F_t|$ is base-point-free showing that 
 $C_t$ is a birational du Val curve.
Since all surfaces $X_{t}$ of the family have $C$ as hyperplane section and since $C$ is not Brill-Noether general, from
Theorem 3.2 of \cite{ABFS}, each surface $X_{t}$ must be Halphen of some index $m \leq 2s+1$. But, since the central fiber
is Halphen of index $s+1$, the integer $m$ must be greater or equal than $s+1$, but also  a multiple of $s+1$.
This means that all fibers of the family $f$ are polarised Halphen surfaces of index $(s+1)$. Then Corollary (\ref{iso}) applies  to them
and we get that such surfaces are all isomorphic as polarised Halphen surfaces of genus $2s+1$. Since these surfaces have a discrete group
of automorphisms (see \cite{Cantat-Dolgachev}) they must be generically obtained one from the other by a projectivity and this is a contradiction.
\end{proof}

\begin{proof} (of Theorem \ref{main-theorem}). The Theorem follows from Theorem \ref{Main-s+1},
by semicontinuity and from the irreducibility of the space of du Val curves.
\end{proof}
\begin{proof} (of Corollary \ref{main-cor}). This follows immediately from Theorem \ref{main-theorem} and from Corollary 3.12 in \cite{Wahl-sections}
with $E=C$ and $X=\ov S$. In that Corollary,  (a) is trivially satisfied, (b) follows from the fact that 
$$
H^0(C, T_{\ov S|C}\otimes\mcO_C(-C))=\ker\{H^0(C, N_{C/\ov S}\otimes T_C)\overset \alpha\longrightarrow H^1(C, T_C^2)\}
$$
(and $\alpha$ is injective), and (c) follows from the BNP property of $C$.

\end{proof}

\bibliographystyle{abbrv}

\bibliography{bibliografia}

\begin{thebibliography}{10}

\bibitem{ABFS}
E.~Arbarello, A.~Bruno, G.~Farkas, and G.~Sacc{\`a}.
\newblock Explicit {B}rill--{N}oether--{P}etri general curves.
\newblock {\em Comment. Math. Helv.}, 91(3):477--491, 2016.

\bibitem{ABS1}
E.~Arbarello, A.~Bruno, and E.~Sernesi.
\newblock Mukai's program for curves on a {K}3 surface.
\newblock {\em Algebr. Geom.}, 1(5):532--557, 2014.

\bibitem{ABS2}
E.~Arbarello, A.~Bruno, and E.~Sernesi.
\newblock On two conjectures by {J}.{W}ahl.
\newblock {\em arXiv:1507.05002v2}, 2015.

\bibitem{Arbarello-Sacca12}
E.~Arbarello and G.~Sacc{\`{a}}.
\newblock Singularities of moduli spaces of sheaves on {K}3 surfaces and
  {N}akajima quiver varieties.
\newblock {\em arXiv150500759}, 2015.

\bibitem{Beauville-Torelli-K3}
A.~Beauville.
\newblock Le th\'eor\`eme de {T}orelli pour les surfaces {$K3$}: fin de la
  d\'emonstration.
\newblock {\em Ast\'erisque}, (126):111--121, 1985.
\newblock Geometry of $K3$ surfaces: moduli and periods (Palaiseau, 1981/1982).

\bibitem{Cantat-Dolgachev}
S.~Cantat and I.~Dolgachev.
\newblock Rational surfaces with a large group of automorphisms.
\newblock {\em J. Amer. Math. Soc.}, 25(3):863--905, 2012.

\bibitem{Franciosi-Tenni}
M.~Franciosi and E.~Tenni.
\newblock On {C}lifford's theorem for singular curves.
\newblock {\em Proc. Lond. Math. Soc. (3)}, 108(1):225--252, 2014.

\bibitem{Green-Koszul}
M.~L. Green.
\newblock Koszul cohomology and the geometry of projective varieties, with an
  appendix by m. green and r. lazarsfeld.
\newblock {\em J. Differential Geom.}, 19(1):125--171, 1984.

\bibitem{Huybrechts-Lehn}
D.~Huybrechts and M.~Lehn.
\newblock {\em {The geometry of moduli spaces of sheaves}}.
\newblock Number {E $31$} in {Aspects of Mathematics}. {Vieweg}, {1997}.

\bibitem{Kulikov}
V.~S. Kulikov.
\newblock Degenerations of $k3$ surfaces and enriques surfaces.
\newblock {\em Izv. Akad. Nauk SSSR Ser. Mat.}, 41(5):1008--1042, 1977.

\bibitem{Lange}
H.~Lange.
\newblock Universal families of extensions.
\newblock {\em J. Algebra}, 83(1):101--112, 1983.

\bibitem{Morrison-C-S-exact}
D.~R. Morrison.
\newblock The {C}lemens-{S}chmid exact sequence and applications.
\newblock In {\em Topics in transcendental algebraic geometry ({P}rinceton,
  {N}.{J}., 1981/1982)}, volume 106 of {\em Ann. of Math. Stud.}, pages
  101--119. Princeton Univ. Press, Princeton, NJ, 1984.

\bibitem{Mukai2}
S.~Mukai.
\newblock Curves and {$K3$} surfaces of genus eleven.
\newblock In {\em Moduli of vector bundles ({S}anda, 1994; {K}yoto, 1994)},
  volume 179 of {\em Lecture Notes in Pure and Appl. Math.}, pages 189--197.
  Dekker, New York, 1996.

\bibitem{Persson-Pinkham}
U.~Persson and H.~Pinkham.
\newblock Degeneration of surfaces with trivial canonical bundle.
\newblock {\em Ann. of Math. (2)}, 113(1):45--66, 1981.

\bibitem{Voisin_W}
C.~Voisin.
\newblock Sur l'application de {W}ahl des courbes satisfaisant la condition de
  {B}rill-{N}oether-{P}etri.
\newblock {\em Acta Math.}, 168(3-4):249--272, 1992.

\bibitem{Wahl-square}
J.~Wahl.
\newblock On cohomology of the square of an ideal sheaf.
\newblock {\em J. Algebraic Geom.}, 6(3):481--511, 1997.

\bibitem{Wahl-sections}
J.~Wahl.
\newblock Hyperplane sections of {C}alabi-{Y}au varieties.
\newblock {\em J. Reine Angew. Math.}, 544:39--59, 2002.

\end{thebibliography}

\end{document}